\DeclareMathOperator{\Ran}{Ran}
\DeclareMathOperator*{\slim}{s-lim}
\DeclareMathOperator{\clos}{clos}
\DeclareMathOperator{\dist}{dist}
\renewcommand\Re{\text{\rm Re}\,}
\newcommand{\q}{\quad}
\newcommand{\ov}{\overline}
\newcommand{\z}{\zeta}
\newcommand{\e}{\eqref}
 \newcommand{\1}{\mathbbm{1}}
\numberwithin{equation}{section}
\theoremstyle{plain}
\newtheorem{theorem}{\bf Theorem}[section]
\newtheorem{lemma}[theorem]{\bf Lemma}
\newtheorem{corollary}[theorem]{\bf Corollary}
\newtheorem{cond}[theorem]{\bf Condition}
\theoremstyle{definition}
\newtheorem{definition}[theorem]{\bf Definition}
\theoremstyle{remark}
\newtheorem*{remark*}{\bf Remark}
\newtheorem{remark}[theorem]{\bf Remark}
\newtheorem{example}[theorem]{\bf Example}
\begin{document}

\title[Toeplitz operators
with piecewise continuous symbols]{Multichannel scattering theory for Toeplitz operators
with piecewise continuous symbols}

%\title[Toeplitz operators with piecewise continuous symbols]{Scattering theory for self-adjoint  Toeplitz operators
% with piecewise continuous symbols}

\author{Alexander V. Sobolev}
\address{University College London, Department of Mathematics, 
Gower Street, London WC1E 6BT, U.K.}
\email{a.sobolev@ucl.ac.uk}

\author{Dmitri Yafaev}
\address{Univ  Rennes, CNRS, IRMAR-UMR 6625, F-35000
    Rennes, France and SPGU, Univ. Nab. 7/9, Saint Petersburg, 199034 Russia}
\email{yafaev@univ-rennes1.fr}

\date{20 May 2019}

\thanks{Our collaboration has become possible through the hospitality and financial support 
of the Departments of Mathematics  of University College London and of the University of Rennes 1.
The LMS grant is   gratefully acknowledged. The first author was also supported 
by EPSRC grant EP/P024793/1. 
The second author was also supported by the Grant RFBR No. 17-01-00668 A and the Mittag-Leffler Institute}

\begin{abstract} 
Self-adjoint Toeplitz operators have purely absolutely continuous 
spectrum. For Toeplitz operators $T$ with piecewise continuous symbols, 
we suggest a further spectral classification determined by  propagation properties of the 
operator $T$, that is, by the behavior of $\exp(-iTt) f$ for $t\to\pm\infty$. It turns out that the spectrum is naturally partitioned into three disjoint subsets: \textit{thick}, \textit{thin} 
and \textit{mixed} spectra. On the thick spectrum,  the propagation properties are modelled by the 
continuous part of the symbol, whereas on the thin spectrum, the model operator is determined by the 
jumps of the symbol. On the mixed spectrum, these two types of the asymptotic evolution of $\exp(-iTt) f$ coexist. 
This classification is justified in the framework of scattering theory.
We prove the existence of wave operators that relate the model operators with the Toeplitz operator $T$.
The ranges  of these wave operators are pairwise orthogonal, and their orthogonal sum exhausts the whole space, i.e., the set of these wave operators is asymptotically complete. 
\end{abstract}

\subjclass[2000]{Primary 47B35; Secondary 47A40}

\keywords{Toeplitz operators,  discontinuous   symbols, spectral classification, model operators, 
multichannel scattering,  wave operators}

\maketitle

\section{Introduction}

\subsection{Basic notions}
The Toeplitz operator $T = T(\omega)$ with symbol $\omega (\z)$ 
is defined on the Hardy space $\mathbb H^2 = \mathbb H^2_+$ 
on the unit circle $\mathbb T = \{\z\in {\mathbb C}: |\z|=1\}$ by the formula 
 \begin{equation}
(T u)(\z) = (\mathbb P\omega u)(\z), \quad u\in \mathbb H^2,\ \z\in\mathbb T, 
\label{eq:TT}\end{equation}
 where $\mathbb P = \mathbb P_+: L^2(\mathbb T)\to \mathbb H^2$ is the orthogonal projection 
 onto $\mathbb H^2$. 
The normalized Lebesgue measure on $\mathbb T$ is denoted 
by $d{\mathbf m}(\z) = (2\pi i \z)^{-1} d\z$.
Throughout the paper we assume that $\omega\in L^\infty(\mathbb T)$, 
so that the operator $T$ is   bounded. 
If $\omega$ is real-valued, then the operator $T$ is self-adjoint. 
%The main subject of the paper is spectral properties of this operator.
 
In the early 60's M. Rosenblum (see \cite{Ros1, Ros2, Ros3}) established a number of fundamental spectral 
properties of
self-adjoint Toeplitz operators $T$. Namely 
if $\omega$ is a non-constant function,  then $T$  
is absolutely continuous and its spectrum $\sigma(T)$ 
fills the interval 
$[\gamma_1, \gamma_2]$ where 
\begin{equation}
\gamma_1 = \textup{ess}\inf\omega, \q\gamma_2 = \textup{ess}\sup\omega. 
\label{eq:gamma}\end{equation}
Furthermore, M. Rosenblum (see also the paper \cite{Ismag} by R.~S.~Ismagilov) has constructed a spectral representation of $T$ and described its spectral multiplicity.

In this paper we consider Toeplitz operators with  piecewise continuous symbols $\omega$.
It is natural to compare $T$ with the operator $\boldsymbol\Omega$ of 
multiplication 
by a piecewise continuous function $\omega$ on $L^2(\mathbb T)$. 
The spectrum $\sigma(\boldsymbol\Omega)$ is the closure 
$\clos\omega(\mathbb T)$ of the set $\omega(\mathbb T)$. 
A point $\lambda\in \sigma(\boldsymbol\Omega)$ is an eigenvalue of $\boldsymbol\Omega$ if and only if 
the  measure 
${\mathbf m}\big(\{\z\in \mathbb T: \omega(\z) =~\lambda\}\big)$ is positive. 
If $\omega$  is piecewise $C^1$, then, 
apart from eigenvalues, the spectrum of $\boldsymbol\Omega$ is  absolutely continuous, see
 Lemma~\ref{aacc}.
Also, the multiplicity of the spectrum is found as the number of solutions of the equation 
$\omega(\z) = \lambda$. In particular, the eigenvalues of $\boldsymbol\Omega$
can have only infinite multiplicity.

Our main aim is to construct scattering theory for Toeplitz operators 
with piecewise continuous symbols. We start, however, with the case of smooth symbols and 
prove the existence, isometry and completeness of the wave operators
$W_{\pm} (T(\omega), \boldsymbol\Omega; {\mathbb P})$ for the pair 
$\boldsymbol\Omega, T(\omega)$ and the ``identification" $\mathbb P  : L^2(\mathbb T)\to \mathbb H^2$. 
In this case $\sigma(T)=\sigma(\boldsymbol\Omega)$.

For general piecewise continuous symbols, 
  $\sigma(\boldsymbol\Omega)$ may have gaps, so 
only the inclusion $\sigma(\boldsymbol\Omega)\subset [\gamma_{1}, \gamma_{2}]=\sigma(T)$ holds.
At first glance, this looks counter-intuitive since $T$ is a compression of $\boldsymbol\Omega$. 
 The set 
$\sigma(T)$ splits into two  disjoint subsets $\sigma(\boldsymbol\Omega)$ 
and $\sigma(T)\setminus \sigma(\boldsymbol\Omega)$. 
It turns out that the spectral nature
of these two components is qualitatively different.
In some sense, the spectrum of the Toeplitz operator $T$ is \textit{thin} on $\sigma(T)\setminus \sigma(\boldsymbol\Omega)$, and it is
\textit{thick} (or \textit{mixed}) on the set $\sigma(\boldsymbol\Omega)$.
 We give  precise definitions of these  
terms and prove corresponding results in the scattering theory framework, studying the asymptotic behaviour 
of the time evolution $e^{-iTt}f$ as $t\to\pm\infty$ for various $f\in \mathbb H^2$.

%a fine classification of the absolutely continuous spectra of Toeplitz operators with piece-wise continuous symbols. Observe that  $\sigma(\boldsymbol\Omega)\subset [\gamma_{1}, \gamma_{2}]=\sigma(T)$, but possibly  $\sigma(\boldsymbol\Omega)\neq \sigma(T)$ if the symbol $\omega$ of $T$ has jumps.

\subsection{Truncated Toeplitz matrices} 

Previously, Toeplitz operators with discontinuous symbols 
were extensively studied in a 
different context: instead of the operator $T$ one considered \textit{truncated} Toeplitz matrices 
$T_n = T_n(\omega)$ as $n\to\infty$.
%, with a complex-valued $\omega$. 
Here $T_n$ are defined as follows. 
Let $\mathcal P_n\subset {\mathbb H}^{2} $ be the subspace of all polynomials 
of degree $\leq n$, and let ${\sf P }_{n}$ be the orthogonal projection onto $\mathcal P_n$. 
The operator $T_{n}$ acting on the space $\mathcal P_{n}$ is defined by the formula $T_{n}f = {\sf P }_n T f$. 
%Returning to real-valued $\omega$,
The difference between the two components of $\sigma(T)$ 
becomes clearly visible when one studies  
the asymptotics of the eigenvalue 
counting function $N_n(a, b)$ for the matrix $T_n$ on an interval $(a, b)$ as $n\to\infty$. 
It is straightforward to deduce  from the classical result in  
\cite[Sect 5.2]{Gren_Szego_1958} the following formula.  
If 
$\mathbf m\big(\{\z: \omega(\z) = a\}\big) 
= \mathbf m\big(\{\z: \omega(\z) = b\}\big) = 0$, then 
\begin{align*}
\lim\limits_{n\to\infty}n^{-1} N_n(a, b) = \mathbf m\big(\{\z: a < \omega(\z) < b\}\big). 
\end{align*}
In particular, if $(a, b)\subset \sigma(T)\setminus\sigma(\boldsymbol\Omega)$,   
then the right-hand side is zero. 
This is consistent with our choice of the term ``thin spectrum" for the set 
$\sigma(T)\setminus\sigma(\boldsymbol\Omega)$.

Starting with the pioneering Szeg\H o's paper 
\cite{Szego1915}, a 
substantial body of research was focused on the asymptotic behaviour of the determinants 
of the truncated Toeplitz matrices 
$T_n = T_n(\omega)$, as $n\to\infty$, with complex-valued $\omega$. 
The case of discontinuous symbols $\omega$ attracted a special attention in connection 
with the so-called Fischer-Hartwig formula, see  \cite{Fisher1968} 
and   \cite{DIK2011, Kras}. 

\subsection{Scattering theory framework} 

For smooth symbols $\omega$, 
the asymptotic behaviour of the evolution $e^{-iTt}f$ as $t\to\pm\infty$ 
is described by the same formula for all $f\in \mathbb H^2$. Precisely,   
for every $f$ there exists an $f_\pm\in L^2 (\mathbb T)$ such that
\begin{align}\label{evo:eq}
e^{-iTt} f \sim \mathbb P e^{-i\boldsymbol\Omega t}f_\pm,\quad t\to \pm\infty,
\end{align}
where the symbol ``$\sim$" means that the difference of the left- and right-hand sides 
tends to zero. 

To describe the evolution $e^{-iTt} f $ for discontinuous symbols $\omega$,  we distinguish three 
components of the spectrum $\sigma(T)$. Assume that $\omega$ is continuous on $\mathbb T$ apart from 
some finite subset ${\sf S}\subset\mathbb T$. For each $\eta\in {\sf S}$, we set
\begin{align*}
a_- = \min\{\omega(\eta-0), \omega(\eta+0)\}, \q
a_+ = \max\{\omega(\eta-0), \omega(\eta+0)\}, 
%\label{eq:Jump1}
\end{align*}
and  define 
\begin{align}
\Lambda_\eta = [
%\alpha
a_-, 
%\alpha
a_+], \q \Upsilon = \bigcup_{\eta\in  {\sf S}} \Lambda_\eta.
\label{eq:Jump2}
\end{align} 
Clearly,
\begin{align*}
\sigma(T)= \sigma(\boldsymbol\Omega )\cup  \Upsilon  \q \mbox{and}\q
\sigma(T)\setminus\sigma(\boldsymbol\Omega )\subset   \Upsilon \subset \sigma(T)   .
\end{align*}
Denoting by $E_T(X)$, $X\subset \mathbb R$,  the spectral projection of the operator $T$, 
introduce the pair-wise orthogonal subspaces 
\begin{align}\label{decompose:eq}
\left. 
\begin{aligned}
\mathcal H_{\rm thin} = &\ \Ran E_T\big(\sigma(T)\setminus\sigma(\boldsymbol\Omega)\big),\\[0.2cm]
\mathcal H_{\rm thick} = &\ \Ran E_T\big(\sigma(\boldsymbol\Omega)\setminus  \Upsilon  \big),\\[0.2cm]
\mathcal H_{\rm mix} = &\ \Ran E_T\big( \sigma(\boldsymbol\Omega)\cap  \Upsilon  \big),
\end{aligned}
\right\}
\end{align}
so that
 \[
\mathcal H_{\rm thin}\oplus \mathcal H_{\rm thick}\oplus\mathcal H_{\rm mix} = \mathbb H^2. 
\]

\begin{definition}\label{class}
The spectrum of the operator $T$ on the subspaces 
$\mathcal H_{\rm thin}$, $\mathcal H_{\rm thick}$ and $\mathcal H_{\rm mix}$
is said to be thin, thick and mixed respectively. 
\end{definition}

  Up to   sets of measure zero, these three components of the spectrum are disjoint. This yields a fine classification of the absolutely continuous spectra for Toeplitz operators with piecewise continuous symbols.

If the symbol $\omega$ is continuous, then $ \Upsilon  =\varnothing$ so that $\sigma(T)= \sigma(\boldsymbol\Omega)  $. In this case $\mathcal H_{\rm thick} = \mathbb H^2$ and the
whole spectrum is thick. On the other hand, if $\omega$ is piecewise constant (but not a constant function), 
then the Lebesgue measure $|\sigma(\boldsymbol\Omega)|=0$ and $\mathcal H_{\rm thin}=\Ran E_T\big(\sigma(T) \big) = \mathbb H^2$, i.e., the whole spectrum is thin. 

  Let us illustrate this partition of the spectrum 
using the example of symbol $\omega$ in Fig~1. 
According to Theorem \ref{Ros_smooth:thm}, the spectrum of $T(\omega)$ is simple.  
In this case the multiplicity of the spectrum of $ \boldsymbol\Omega$ does not exceed $2$, see 
Theorem \ref{omega}.
If for a   spectral interval $\Lambda\subset\sigma (\boldsymbol\Omega)$ 
it equals $2$, then the spectrum of $T$ on $\Lambda$ is thick.
If it equals $1$, then the spectrum of $T$ on $\Lambda$ is mixed. 
 The gap in the spectrum of $\boldsymbol\Omega$ produces the thin spectrum
of $T$.

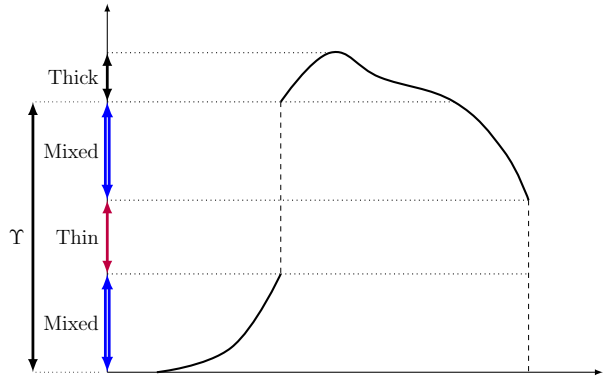
\begin{figure}
%\label{spectra:fig}
\resizebox{8cm}
{!}{
\begin{tikzpicture}\draw [very thick] plot[smooth, tension=.7] coordinates {(-4,-4) (-2.5,-3.5) (-1.5,-2)};
\draw [very thick] plot[smooth, tension=.7] coordinates {(-1.5,1.5) 
(-0.5,2.5) (0.5,2) (2,1.5) (3,0.5) (3.5,-0.5)};
\draw [dashed](-1.5,1.5) node (v2) {} -- (-1.5,-2) node (v3) {};
\draw [dashed](3.5,-0.5) node (v4) {} -- (3.5,-4);
\draw [semithick][-latex](-5,-4) node (v1) {} -- (5,-4);
\draw [semithick][-latex](-5,-4)  -- (-5,3.5);
\draw [semithick][dotted](-5,1.5) node (v8) {} -- (-1.5,1.5);
\draw [semithick][dotted](-5,-2) node (v5) {} -- (-1.5,-2);
\draw [semithick][dotted](2,1.5) -- (-1.5,1.5);
\draw [semithick][dotted](3.5,-0.5) -- (-5,-0.5) node (v7){};
\draw [semithick][dotted](-0.5,2.5) -- (-5,2.5) node (v9) {};
\draw [semithick][dotted] (-1.5,-2) -- (3.5,-2);
\draw [double][semithick][ultra thick, blue][latex-latex](-5,-4) node (v10) {} --(-5,-2)
node [pos=.5, left, black]  {Mixed};
\draw [semithick][ultra thick, purple][latex-latex](-5,-2) --(-5,-0.5)
node [pos=.5, left, black]  {Thin};
\draw [double][ultra thick, blue][latex-latex](-5,-0.5) -- (-5,1.5)
node [pos=.5, left, black]  {Mixed};
\draw [semithick][ultra thick, black][latex-latex](-5,1.5) node (v6) {} -- (-5,2.5)
node [pos=.5, left, black]  {Thick};
\draw[dotted] (v6) -- (-6.5,1.5) node (v11) {};
\draw[dotted] (v10) -- (-6.5,-4) node (v12) {};
\draw [semithick][ultra thick, black][latex-latex] (-6.5,1.5) -- (-6.5,-4) 
node [pos=.5, left, black]  {$ \Upsilon  $};
\end{tikzpicture}}
\caption{Partition into thick, thin and mixed spectra}
\end{figure}

These definitions are justified in terms of the asymptotic behaviour of $e^{-iTt} f$ as 
$t\to\pm \infty$ for $f$ from each of these subspaces. 
If $f\in \mathcal H_{\rm thick}$, then the asymptotic 
formula \eqref{evo:eq} holds. 
On the subspace $\mathcal H_{\rm thin}$, the evolution is modelled by 
the Toeplitz operators $T_{\rm s} = \omega_{\rm s}$ with symbols $\omega_{\rm s}$  
that are step functions on $\mathbb T$. 
Here we use  that for symbols which are indicators of arcs, 
the operator 
$e^{-iT_{\rm s}t} $ admits an  explicit representation, 
see Sect. \ref{jumps:sect}. On the subspace $\mathcal H_{\rm mix}$, the evolution $e^{-iTt}f$ is, asymptotically, 
a linear combination of evolutions for the thin and thick cases.

A surprising fact is that, for $f\in \mathcal H_{\rm mix}$,  the asymptotic
behaviour of $e^{-iTt}f $ as $t\to\pm\infty$  qualitatively depends  
on the sign of $t$. For instance, 
the evolution may be  ``thick" for one of the signs  and ``thin"  for the other sign. 
This phenomenon is discussed in Section~5.4.

The difference between thick and thin spectra  
manifests itself also  in the structure of the continuous spectrum 
eigenfunctions. 
As discussed in Sect. \ref{diag:subsect}, 
%Recall that  
an arbitrary Toeplitz operator with a simple spectrum can be reduced to the multiplication operator by $\lambda$ in the space $L^2 (\gamma_{1}, \gamma_{2})$ 
by a unitary operator $\Phi  : {\mathbb H}^2\to L^2 (\gamma_{1}, \gamma_{2})$ defined by a formal relation
 \begin{equation}
(\Phi f)(\lambda) = \int_{\mathbb T}\ov{\varphi(\z;\lambda) } f(\z) d{\bf m}(\z).
\label{eq:Ros3}
\end{equation}
This means that for any $f\in \mathbb H^2$, 
\begin{equation}
(\Phi T f)(\lambda)=\lambda (\Phi  f)(\lambda), \q {\rm a.e.}\q \lambda\in (\gamma_1, \gamma_2).
\label{eq:Ros4}
\end{equation}
The functions $\varphi(\z;\lambda)$ satisfy the equation 
$T(\omega) \varphi(\lambda)= \lambda \varphi(\lambda)$, 
in an appropriate generalized sense (see \cite[Sect. 5]{SobYaf} 
for details of this construction), 
and hence can be interpreted as eigenfunctions of the operator $T(\omega)$.
We note that

--  eigenfunctions of  the thick spectrum  
  have stronger singularities
   than those of the thin one,

--   on the thick spectrum, the position of singularities depends on the spectral parameter $\lambda$ while
on the thin spectrum, singularities are located at the points of discontinuity irrespectively of the value of $\lambda$. 

In Section \ref{examples:subsect} we consider two   examples where  eigenfunctions  are given by explicit formulas: one 
with a smooth symbol, and the 
other one with a symbol which is the indicator of an arc of $\mathbb T$. 
 For brevity we call such symbols \textit{indicator type} symbols. 
In the first case the entire spectrum is thick, and in the second case the entire  spectrum is thin. 

 \subsection{Analytic background} 

We study the evolution $e^{-i Tt} f$ by investigating 
wave operators for the pairs $\boldsymbol\Omega, T$ and $T_{\rm s}, T$. 
 Our main results are the existence and completeness of an appropriate set of
wave operators.

From the analytic point of view, our approach comprises three ingredients:

\begin{enumerate}[\rm (i)]

\item 
To investigate the thick spectrum, we use 
trace class scattering theory which allows us to prove the existence of  the wave 
operators for the pair $\boldsymbol\Omega, T$.   
In the case of smooth symbols, the trace class framework yields also the completeness of 
 these wave  operators.
 
\item
For the thin spectrum, we rely on Cook's criterion 
which leads to the existence of wave operators for the 
pairs $T_{\rm s}, T$ with   model Toeplitz
operators $T_{\rm s}=T (\omega_{s})$ whose symbols 
$\omega_{s}$ are appropriately chosen step functions. 

\item
In contrast to the smooth case, where 
there is only one scattering channel described by formula \eqref{evo:eq}, 
for symbols with discontinuities each jump creates a new scattering channel. 
Thus our final task is to show \textit{asymptotic completeness}, that is to prove that the constructed 
scattering channels exhaust the whole space. 
%In scattering problems  for Schr\"odinger operators, 
%the most difficult part is 
%The most difficult analytic problem of scattering theory is 
%to prove \textit{asymptotic completeness}, that is, to show that 
%the constructed scattering channels exhaust the whole space. 
%For  Toeplitz operators with non-smooth symbols, 
Our proof of the asymptotic completeness     
relies on the  results on spectral multiplicity of Toeplitz
operators. 
\end{enumerate}

\subsection{Related problems} 
We mention two such problems. 
The first one is the three-particle (and more generally, a multi-particle) 
quantum-mechanical problem. 
In this problem, it also makes sense to classify 
bands of the spectrum as being ``thick" or ``thin", although 
the precise meaning of these terms is, of course, different from the one 
introduced above for Toeplitz operators. 
The  ``thick'' spectrum describes scattering states where all 
particles are asymptotically free, 
while the  ``thin'' spectrum corresponds to scattering of one of 
the particles on a bound state of the other two. 
The ``thin'' spectrum has essentially two-particle nature. 
 
The second problem is scattering theory for Hankel operators with discontinuous symbols.  
While Hankel operators with smooth symbols are compact, they acquire  
an absolutely continuous spectrum if their symbols have jumps. 
This phenomenon has been 
known in concrete examples, such as the Carleman operator, 
see \cite[Chapter 10]{Pe}. 
A general spectral theory of Hankel operators with discontinuous 
symbols was built in \cite{PuYaf}.  
 Interestingly, continuous spectrum   eigenfunctions 
of Hankel (see \cite[Sect. 4]{PuYaf}) and Toeplitz operators with discontinuous symbols
  have singularities of the same type.

\subsection{Structure of the paper}

The paper is organized as follows. 
In Sect. 2, we collect known 
facts about Toeplitz operators necessary for the rest of the paper. In particular, we describe in Theorem 
\ref{Ros_smooth:thm} the spectral multiplicity of operators with  piecewise continuous symbols $\omega$ in terms adapted to our applications to scattering theory.
In Sect. 3, 
we provide some general information about the trace class scattering theory.  Then we
investigate wave operators for the pair $ \boldsymbol\Omega, T$. This is already sufficient for construction of scattering theory for  Toeplitz operators with smooth symbols (see Theorem~\ref{deltapm:thm}).
Sect. 4 focuses on wave operators for symbols with jump discontinuities. 
This requires an analysis of the propagator of the model Toeplitz operator, 
i.e.,  operator with an indicator type symbol. 
% In Sect. 5 we calculate the spectral multiplicity  in a form, adapted  for piece-wise smooth symbols. The main results are collected in Sect. 6. 
  In Sect. 5,  we prove the  
asymptotic completeness of the set of wave operators constructed in the previous sections. The main result is stated as 
Theorem~\ref{Main}.

 \subsection{ Notation}

%The notation $\mathbb H^2_{\pm} = \mathbb H^2_{\pm}(\mathbb T)$ stands for the    
%standard Hardy classes on the unit circle, 
%and $\mathbb P_\pm$  
%-- for orthogonal projections $L^2(\mathbb T)\to \mathbb H^2_\pm$.
%As a rule, we use the notation $\mathbb H^2 = \mathbb H^2_+$.

The symbol $\mathbb D$ is used to denote the unit disk $\{z\in\mathbb C: |z|<1\}$. 
On $\mathbb T$ we define the 
standard measure $d{\bf m}(\z) =   (2\pi i\z)^{-1} d\z$, $\z\in\mathbb T$.  
For any $\z_1, \z_2\in \mathbb T$, we denote by 
   $(\z_1, \z_2)$   the   open arc  
joining $\z_1$ and $\z_2$ 
counterclockwise. 
For a function $\omega$ defined on $\mathbb T$, 
the notation $\omega(\z\pm 0)$ stands for the limit $\lim_{\varepsilon\downarrow 0}
\omega(\z e^{\pm i\varepsilon})$, if it exists. 
Let $\1_{\Delta}$ be the indicator of a set $\Delta\subset\mathbb T$; 
the operator in $L^{2}({\mathbb T})$ of multiplication by this 
function is also denoted by $\1_{\Delta}$. 

Denote by $F: L^1 ({\mathbb T})\to \ell^\infty ({\mathbb Z})$ 
the discrete Fourier transform: 
\begin{align}\label{fourier:eq}
(Fu)_n = \int_{\mathbb T} \z^{-n} 
u(\z) d\mathbf m(\z),\q n\in \mathbb Z, \; u\in L^2(\mathbb T).
\end{align}   
Considered as a mapping of $L^2(\mathbb T)$ onto 
$\ell^2(\mathbb Z)$, the operator $F$ is unitary. 
The notation $\mathbb H^2_{\pm} = \mathbb H^2_{\pm}(\mathbb T)$ 
stands for the  Hardy classes 
\begin{align*}
\mathbb H^2_+ = &\q \{ f\in L^2(\mathbb T): (Ff)_n  = 0,\q n <0\},\\
\mathbb H^2_- = &\q \{ f\in L^2(\mathbb T): (Ff)_n  = 0,\q n \ge 0\}.
\end{align*}  
As a rule, we set $\mathbb H^2 = \mathbb H^2_+$.  
By $\mathbb P_\pm$ we denote the orthogonal projection from 
$L^2 = \mathbb H^2_+\oplus\mathbb H^2_-$ onto $\mathbb H^2_\pm$.  
Usually we 
use the standard realization   of $\mathbb H^2$ as the space of functions $u$ that are 
analytic on the disk $\mathbb D$ and such that the $L^2(\mathbb T)$-norms 
$\|u(r\ \cdot\ )\|$ are bounded uniformly 
in $r<1$; see, for example, the book \cite{Hof}.

It is often convenient to identify $\mathbb T$ with 
$\mathbb R/ (2\pi\mathbb Z)$ by writing $\z = e^{ix}$ for each $\z\in \mathbb T$. 
Thus with every arc $\Delta\subset \mathbb T$ we associate an interval $X\subset \mathbb R/ (2\pi\mathbb Z)$, and write 
$\Delta = e^{iX}$. Under this change of variables we have $d\mathbf m(\z) = (2\pi)^{-1}dx$. 
Throughout the paper we use the notation  $w(x) = \omega(\z)$.
It is also convenient to set $\omega' (\z):= w'(x)$.

Let $A$ be a self-adjoint operator   on a Hilbert space $\mathcal H$.
By $E_A(\Lambda) $ we denote its spectral projection 
   associated with a Borel set $\Lambda\subset \mathbb R$; $\mathcal{ H}^{\rm(ac)}_{A} $ is the absolutely continuous subspace of $A$; 
$P^{\rm(ac)}_{A} $ is the orthogonal projection on   $\mathcal{ H}^{\rm(ac)}_{A} $ and $A^{\rm(ac)}$ is the restriction of the operator $A$ on $\mathcal{ H}^{\rm(ac)}_{A} $.

Let an operator $A$ and an interval $\Lambda\subset{\mathbb R}$ 
be such that the operator $E_{A}(\Lambda) A $ is absolutely continuous. 
Suppose that there exists a Hilbert space $\mathfrak N$ and 
a unitary mapping $U: E_{A}(\Lambda)  \mathcal{H}\to L^{2} ( \Lambda; \mathfrak{N} )$ 
such that the operator $U E_{A}  (\Lambda) A U^{*}$ acts as multiplication by independent variable $\lambda$ on the space 
$L^{2} (\Lambda; \mathfrak{N} )$. 
Then we say that the spectral representation of $A E_A(\Lambda)$ is realized on  
$L^2(\Lambda; \mathfrak N)$.  We denote by $m_{A}(\Lambda):= \dim \mathfrak N$ the multiplicity of the spectrum of $A$ on $\Lambda$.
 
By $C, c$ (with or without indices) we denote various positive constants whose precise values
are of no importance. 

Unless   stated otherwise,
throughout the whole paper we assume that $\omega$ is a real-valued non-constant function and that 
$\omega\in L^\infty(\mathbb T)$. 
 
\section{Spectral analysis of Toeplitz and  multiplication operators}%\label{SA:sect}

 In this section we study spectra of Toeplitz $T$ and multiplication $\boldsymbol\Omega$ operators for piecewise continuous symbols $\omega$.

\subsection{Multiplication operators}

Spectral analysis of the multiplication operator is elementary. It is based on the 
simple relation $E_{\boldsymbol\Omega}(\Lambda) = \1_{\omega^{-1} (\Lambda)}$ 
valid for any Borel set $\Lambda\subset \mathbb R$. 
The next two lemmas are straightforward.
%
%essentially known.

\begin{lemma}\label{Omegap:lem}  
Let $\Delta\subset\mathbb T$ be an open set.
Suppose that $\omega \in C^1(\Delta)$ and $\omega'(\z)\not = 0$ for $\z\in\Delta$.
Then $L^2(\Delta)\subset \mathcal{H}^{\rm (ac)}_{\boldsymbol\Omega}$ 
so that the 
 spectrum of the  
operator $\boldsymbol\Omega$ restricted to the subspace $L^2(\Delta)$ is absolutely continuous.
% and the wave operators $W_\pm(\mathbf T, \boldsymbol\Omega; \1_\Delta)$ are isometric on $\Ran \1_\Delta$.  
Moreover,  if $\Delta$ is an arc, then   this spectrum 
%of 
%$\boldsymbol\Omega$ on $\Lambda = \omega(\Delta)$ 
is simple. 
\end{lemma}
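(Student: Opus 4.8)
=== PROOF PROPOSAL ===

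The plan is to analyze the multiplication operator $\boldsymbol\Omega$ directly through the change of variables $\lambda = \omega(\z)$, exploiting the identity $E_{\boldsymbol\Omega}(\Lambda) = \1_{\omega^{-1}(\Lambda)}$ quoted just above the lemma. First I would fix the open set $\Delta$ on which $\omega\in C^1(\Delta)$ and $\omega'(\z)\neq 0$, and work with the real-variable form $w(x) = \omega(e^{ix})$ using the coordinate $\z = e^{ix}$ introduced in the Notation subsection, so that $d\mathbf m(\z) = (2\pi)^{-1}dx$ and $\omega'(\z) = w'(x)$. On each connected component of (the preimage of) $\Delta$, the condition $w'(x)\neq 0$ means $w$ is strictly monotone and hence a $C^1$-diffeomorphism onto an open subinterval of $\R$.

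\smallskip

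For a function $f\in L^2(\Delta)$, I would compute the spectral measure $\langle E_{\boldsymbol\Omega}(\Lambda)f, f\rangle = \int_{\omega^{-1}(\Lambda)\cap\Delta}|f|^2\, d\mathbf m$ and push it forward under $\omega$. Concretely, writing $\Delta$ as a finite or countable union of arcs on each of which $w$ is a strict monotone diffeomorphism, I would perform the substitution $\lambda = w(x)$, $d\lambda = w'(x)\, dx$, to obtain
\begin{equation*}
\langle E_{\boldsymbol\Omega}(\Lambda)f, f\rangle = \int_\Lambda \Big(\sum_{x:\, w(x)=\lambda,\, e^{ix}\in\Delta} \frac{|f(e^{ix})|^2}{2\pi\,|w'(x)|}\Big)\, d\lambda .
\end{equation*}
This exhibits the spectral measure of each $f\in L^2(\Delta)$ as absolutely continuous with respect to Lebesgue measure, which by definition gives $L^2(\Delta)\subset\mathcal H^{\rm(ac)}_{\boldsymbol\Omega}$ and hence the claimed absolute continuity of $\boldsymbol\Omega$ restricted to $L^2(\Delta)$.

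\smallskip

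For the simplicity statement when $\Delta$ is a single arc, I would note that strict monotonicity of $w$ makes $\omega|_\Delta$ injective, so for each value $\lambda$ the solution set $\{x: w(x)=\lambda,\ e^{ix}\in\Delta\}$ is a single point. The displayed density then has exactly one term, and the map $(\Phi f)(\lambda) = (2\pi\,|w'(x(\lambda))|)^{-1/2} f(e^{ix(\lambda)})$ defines a unitary operator from $L^2(\Delta)$ onto $L^2(\omega(\Delta))$ intertwining $\boldsymbol\Omega$ with multiplication by $\lambda$; since the target is a scalar $L^2$-space with no fibre multiplicity, the spectrum is simple. I would verify unitarity by checking that $\Phi$ preserves the $L^2$-norm via the same substitution and that its range is all of $L^2(\omega(\Delta))$ because $\omega|_\Delta$ is onto its image.

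\smallskip

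The main obstacle is mostly bookkeeping rather than a deep difficulty: I must handle the decomposition of $\Delta$ into arcs of monotonicity carefully (the open set $\Delta$ need not be connected even if $w'$ is nonvanishing, since $\Delta$ may be a union of several arcs), and ensure the change-of-variables formula is applied rigorously on each piece, justifying the interchange of summation and integration by nonnegativity (monotone convergence). The only genuinely delicate point is that when $\Delta$ is a single arc the multiplicity-one conclusion relies on $\omega|_\Delta$ being globally injective; this follows at once from strict monotonicity of $w$ along the arc, which is precisely what $w'(x)\neq 0$ on a connected set guarantees.
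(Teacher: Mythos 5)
Your proposal is correct and follows essentially the same route as the paper: the paper's proof defines, for an arc $\Delta$, exactly the unitary you construct, $(Uf)(\lambda) = |\omega'(\omega^{-1}(\lambda))|^{-1/2} f(\omega^{-1}(\lambda))$, which intertwines $\boldsymbol\Omega|_{L^2(\Delta)}$ with multiplication by $\lambda$ on $L^2(\omega(\Delta))$ (yielding both absolute continuity and simplicity), and treats a general open $\Delta$ by applying this to each constituent arc. Your explicit pushforward computation of the spectral measure density is merely a spelled-out version of the same change of variables, so no further comparison is needed.
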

 
\begin{proof}
Let $\Delta$ be an arc, and let $\Lambda = \omega (\Delta)$. Define
the unitary operator $U: L^2(\Delta)\to L^2(\Lambda)$  by the formula
\begin{align*}
(Uf)(\lambda) = \frac{1}{\sqrt{|\omega'(\omega^{-1}(\lambda))|}}
f((\omega^{-1}(\lambda)),\q \lambda\in \Lambda.
\end{align*} 
Since $(U\boldsymbol\Omega f)(\lambda) = \lambda (Uf)(\lambda)$ for $f\in L^2(\Delta)$, 
the restriction of the operator $\boldsymbol\Omega$ to $L^2(\Delta)$ is unitarily equivalent 
to the operator of multiplication by $\lambda$ on $L^2(\Lambda)$.  
This concludes the proof for arcs $\Delta$. In the general case, we only have to apply the result obtained to every constituent arc of $\Delta$.
\end{proof}

\begin{lemma}\label{aacc}
Let $\Delta\subset\mathbb T$ be an open set of full measure.   
Suppose that $\omega\in C^1(\Delta)$. Let 
%the sets $\Delta^{(\pm)}$ be defined 
%by  
\begin{align}\label{eq:dpm}
\Delta^{(\pm)} = \{\z\in \Delta: \pm \omega'(\z) < 0\},\quad 
\Delta_{0} =   \{\z\in \Delta: \omega'(\z) = 0\}.
\end{align}  
Then 
\begin{equation}
P^{(\rm ac)}_{\boldsymbol\Omega} = \1_{\Delta\setminus \Delta_{0}}= \1_{\Delta^{(+)}} + \1_{\Delta^{(-)}} .
\label{eq:ac1}\end{equation}
\end{lemma}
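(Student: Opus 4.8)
The plan is to split $L^2(\mathbb T)$ according to the sign of $\omega'$ and to treat the critical set separately. Since $\omega'$ is continuous on $\Delta$, the sets $\Delta^{(\pm)}$ are open and $\Delta_0$ is relatively closed in $\Delta$, with $\Delta=\Delta^{(+)}\sqcup\Delta^{(-)}\sqcup\Delta_0$ and $\Delta\setminus\Delta_0=\Delta^{(+)}\cup\Delta^{(-)}$; this already gives the second equality $\1_{\Delta\setminus\Delta_0}=\1_{\Delta^{(+)}}+\1_{\Delta^{(-)}}$. Because $\Delta$ has full measure, I may discard $\mathbb T\setminus\Delta$ and regard $L^2(\mathbb T)=L^2(\Delta^{(+)})\oplus L^2(\Delta^{(-)})\oplus L^2(\Delta_0)$ as an orthogonal decomposition into subspaces reducing $\boldsymbol\Omega$ (each indicator commutes with multiplication by $\omega$). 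Since the a.c.\ subspace of an orthogonal direct sum is the direct sum of the a.c.\ subspaces of the summands, it suffices to show that the restriction of $\boldsymbol\Omega$ to $L^2(\Delta^{(\pm)})$ is absolutely continuous while its restriction to $L^2(\Delta_0)$ has no absolutely continuous spectrum.

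For the first claim I would invoke Lemma~\ref{Omegap:lem}. Any open subset of $\mathbb T$ is a countable disjoint union of open arcs, so write $\Delta^{(\pm)}=\bigsqcup_j I_j^{(\pm)}$. On each arc $I_j^{(\pm)}$ we have $\omega\in C^1$ and $\omega'\neq 0$, hence by Lemma~\ref{Omegap:lem} the space $L^2(I_j^{(\pm)})$ lies in $\mathcal{H}^{\rm (ac)}_{\boldsymbol\Omega}$. Taking the orthogonal sum over $j$ gives $L^2(\Delta^{(\pm)})\subset\mathcal{H}^{\rm (ac)}_{\boldsymbol\Omega}$, so the restriction of $\boldsymbol\Omega$ to $L^2(\Delta^{(+)})\oplus L^2(\Delta^{(-)})=L^2(\Delta\setminus\Delta_0)$ is absolutely continuous.

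The remaining, and essential, point is that $\boldsymbol\Omega$ restricted to $L^2(\Delta_0)$ carries no a.c.\ spectrum. Here $\Delta_0$ is precisely the critical set $\{\omega'=0\}$, and the key input is the one-dimensional Sard theorem: for a $C^1$ function the image of its critical set is Lebesgue-null, i.e.\ $|\omega(\Delta_0)|=0$. (In $x$-coordinates, with $w(x)=\omega(e^{ix})$ and $\Delta_0=\{w'=0\}$, this is the elementary statement that $w(\{w'=0\})$ has measure zero, which may also be proved directly by covering the critical set by short intervals on which $|w'|$ is small.) Granting this, fix $f\in L^2(\Delta_0)$; its spectral measure for $\boldsymbol\Omega$ is $\mu_f(Y)=\int_{\Delta_0\cap\,\omega^{-1}(Y)}|f|^2\,d\mathbf m$, which is concentrated on $\omega(\Delta_0)$. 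Choosing a Borel set $N\supset\omega(\Delta_0)$ with $|N|=0$, we get $\mu_f(\mathbb R\setminus N)=0$, so $\mu_f$ is singular with respect to Lebesgue measure. As this holds for every $f$, the subspace $L^2(\Delta_0)$ is purely singular for $\boldsymbol\Omega$.

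Combining the two parts yields $\mathcal{H}^{\rm (ac)}_{\boldsymbol\Omega}=L^2(\Delta\setminus\Delta_0)$, which is exactly the assertion $P^{(\rm ac)}_{\boldsymbol\Omega}=\1_{\Delta\setminus\Delta_0}$. The main obstacle is the singular-part claim, and it is entirely concentrated in the measure-zero property of the critical values $\omega(\Delta_0)$; everything else is bookkeeping with reducing subspaces together with an application of Lemma~\ref{Omegap:lem}.
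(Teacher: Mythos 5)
Your proof is correct and follows essentially the same route as the paper: Lemma~\ref{Omegap:lem} (applied arc by arc) handles $\Delta^{(\pm)}$, and Sard's theorem $|\omega(\Delta_0)|=0$ kills the absolutely continuous part on $\Delta_0$. Your explicit spectral-measure computation for $f\in L^2(\Delta_0)$ is just an unwound version of the paper's one-line inequality $\1_{\Delta_0}\leq \1_{\omega^{-1}(\omega(\Delta_0))}=E_{\boldsymbol\Omega}(\omega(\Delta_0))$ combined with $E_{\boldsymbol\Omega}(\omega(\Delta_0))P^{(\rm ac)}_{\boldsymbol\Omega}=0$.
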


\begin{proof}  
Since ${\bf m}({\mathbb T}\setminus\Delta)=0$ and 
$\Delta=\Delta_{0}\cup \Delta^{(+)}\cup \Delta^{(-)}$ is a union of disjoint sets, we have
\begin{equation}
\1=\1_{\Delta}
%= \1_{\Delta_{0}} + \1_{\Delta\setminus \Delta_{0}} 
=  \1_{\Delta_{0}} +  \1_{\Delta^{(+)}} + \1_{\Delta^{(-)}}.
\label{eq:ac}\end{equation}
It follows from Lemma~\ref{Omegap:lem} that   
\begin{equation}
  \1_{\Delta^{(\pm)}}P^{(\rm ac)}_{\boldsymbol\Omega}=   \1_{\Delta^{(\pm)}}.
  \label{eq:acy}\end{equation}
By Sard's theorem, $|\omega (\Delta_{0})|=0$ whence 
$E_{\boldsymbol\Omega}(\omega(\Delta_0)) P_{\boldsymbol\Omega}^{(\rm ac)} = 0$. 
Since $$\1_{\Delta_0}\leq \1_{\omega^{-1}(\omega (\Delta_0))} =E_{\boldsymbol\Omega}(\omega(\Delta_0)),$$ we infer that 
\begin{equation}\label{eq:acx}
\1_{\Delta_{0}} P^{(\rm ac)}_{\boldsymbol\Omega}= 0.
\end{equation}
Putting together \e{eq:ac} and \e{eq:acy}, \e{eq:acx}, we conclude the proof of \e{eq:ac1}.
\end{proof} 

Further results can be obtained under more specific assumptions on the symbol $\omega$. 

\subsection{Piecewise continuous symbols}
From now on we assume that the symbol $\omega$ is piecewise continuous in the following sense.

\begin{cond} \label{SM:cond}
  \begin{enumerate}[\rm(i)]
\item 
$\omega = \overline{\omega}\in L^\infty(\mathbb T)$ and $\omega$ is not a constant function, 
\item 
There exists a finite set ${\sf S} = \{\eta_k\}\subset\mathbb T$ 
such that $\omega\in C^1(\mathbb T\setminus {\sf S})$, 
\item 
The limits $\omega(\eta_k + 0)$, $\omega(\eta_k-0)$ 
exist for all $\eta_k\in {\sf S}$. 
For every $\eta_k\in {\sf S}$, either $\omega(\eta_k+0)\neq  \omega(\eta_k-0)$ or
  $\omega(\eta_k+0)= \omega(\eta_k-0)$ but
the derivative 
$\omega'$ is not continuous at $\eta_k$.
\end{enumerate}
\end{cond}

 Let the sets $\Delta^{(\pm)}$ and the \textit{critical set} $\Delta_0$ be as defined in 
\eqref{eq:dpm} with $\Delta = \mathbb T\setminus {\sf S}$. 
The image $\Lambda_{\rm cr}=\omega (\Delta_0)$ consists of critical values of 
$\omega$. As already observed earlier, 
by Sard's theorem, the Lebesgue measure $|\Lambda_{\rm cr}|=0$.       
We introduce also the ``threshold" set $\Lambda_{\rm thr}$ which consists of all values 
$\omega(\eta_k \pm 0)$, $\eta_k\in {\sf S}$,   
and define the exceptional set 
\begin{align*}
\Lambda_{\rm exc} =\Lambda_{\rm cr} \cup \Lambda_{\rm thr}.
\end{align*}
Since the set  $\Lambda_{\rm thr}$ is finite, $|\Lambda_{\rm exc}| = 0$. 
Moreover, the set  $\Lambda_{\rm exc}$ is closed (see \cite[Lemma 6.2]{SobYaf}).

\textit{Let us fix an interval $\Lambda = (\lambda_{1},   \lambda_{2})$ such that}
\begin{equation} 
\Lambda \subset (\gamma_{1}, \gamma_{2})\setminus  \Lambda_{\rm exc},
\label{eq:TH}
\end{equation} 
where $\gamma_1, \gamma_2$ are defined in \eqref{eq:gamma}. 
Note that $\omega'(\z)\neq 0$ on $\omega^{-1}(\Lambda)\subset {\mathbb T}\setminus {\sf S}$.
Consider   the open set 
% $\Delta^{(\pm)} = \{\z\in\mathbb T\setminus S: \mp\omega'(\z) >0\}$, and define 
\begin{equation}
\delta^{(\pm)}=\delta^{(\pm)}(\Lambda)= \Delta^{(\pm)}\cap\omega^{-1}(\Lambda)\q \mbox{where} \q \omega : {\mathbb T}\setminus {\sf S}\to {\mathbb R}.
\label{eq:AUU}
\end{equation} 
The open set $\delta^{(\pm)}\subset {\mathbb T}\setminus {\sf S}$ is a union of some number, denoted $n^{(\pm)} $, of disjoint open arcs  
$\delta^{(\pm)}_k =(\alpha^{(\pm)}_k, \beta^{(\pm)}_k)$ such that  
$\pm \omega'(\z) < 0$ for  $\z\in\delta^{(\pm)}_k $. Observe that $\omega (\alpha^{(+)}_k)=\lambda_{2}$, $\omega (\beta^{(+)}_k)=\lambda_{1}$ and $\omega (\alpha^{(-)}_k)=\lambda_{1}$, $\omega (\beta^{(-)}_k)=\lambda_{2}$.
In particular, we see that $\omega (\delta^{(\pm)}_k ) = \Lambda$;  
 see Fig. 2 for illustration. It follows that
\begin{equation}
\omega^{-1}(\Lambda)= \delta^{(+)} \cup \delta^{(-)} 
= \Big( \bigcup_{k=1}^{n^{(+)}} \delta^{(+)}_k \Big)
\cup \Big(\bigcup_{k=1}^{n^{(-)}} \delta^{(-)}_k \Big)
\label{eq:AUx}
\end{equation} 
where
\begin{align}\label{eq:AU}
  n^{(\pm)}   = \#\{ \delta_{k}^{(\pm)} \}.
\end{align} 
Of course, the arcs $\delta_{k}^{(\pm)}$ and the numbers $ n^{(\pm)}$  depend on the chosen interval $\Lambda$.
It is easy to show  (see \cite[Lemma 6.4]{SobYaf}) that 
under assumption  \eqref{eq:TH} 
the numbers $n^{(\pm)}  $ are finite.

Let us now calculate the spectral multiplicity of the multiplication 
operator $\boldsymbol\Omega$ on the interval $\Lambda$.

 \begin{theorem}\label{omega}
Suppose that $\omega$ satisfies Condition \ref{SM:cond}, and that 
an interval $\Lambda$ satisfies condition \eqref{eq:TH}. 
Let the numbers $n^{(\pm)}  $ be defined by formula \e{eq:AU}.
Then the spectral representation of  
the operator $\boldsymbol\Omega  E_{\boldsymbol\Omega} (\Lambda)$   restricted to the 
subspace $\Ran \1_{\Delta^{(\pm)}}$
is realized on the space 
$L^{2}(\Lambda; {\mathbb C}^{n^{(\pm)}})$. 
In other words, the spectral multiplicity of 
the operator $\boldsymbol\Omega\1_{\Delta^{(\pm)}}$   
on $\Lambda$ equals $ n^{(\pm)}  $.
\end{theorem}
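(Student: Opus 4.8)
The plan is to reduce the statement, arc by arc, to Lemma~\ref{Omegap:lem}, which already handles a single arc on which $\omega'$ does not vanish.

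First I would identify the subspace on which we are working. Using the basic relation $E_{\boldsymbol\Omega}(\Lambda) = \1_{\omega^{-1}(\Lambda)}$ and the fact that multiplication operators commute, the restriction of $E_{\boldsymbol\Omega}(\Lambda)$ to $\Ran\1_{\Delta^{(\pm)}}$ is the product $\1_{\Delta^{(\pm)}}\1_{\omega^{-1}(\Lambda)} = \1_{\Delta^{(\pm)}\cap\omega^{-1}(\Lambda)} = \1_{\delta^{(\pm)}}$, by the definition \eqref{eq:AUU} of $\delta^{(\pm)}$. Hence the restriction of $\boldsymbol\Omega\, E_{\boldsymbol\Omega}(\Lambda)$ to $\Ran\1_{\Delta^{(\pm)}}$ is just the operator of multiplication by $\omega$ on $L^2(\delta^{(\pm)})$, and it suffices to realize the spectral representation of this operator on $L^2(\Lambda;\mathbb C^{n^{(\pm)}})$.

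Next, using the decomposition \eqref{eq:AUx} of $\delta^{(\pm)}$ into the $n^{(\pm)}$ pairwise disjoint arcs $\delta_k^{(\pm)}$, I would split
\[
L^2(\delta^{(\pm)}) = \bigoplus_{k=1}^{n^{(\pm)}} L^2(\delta_k^{(\pm)}),
\]
each summand being reducing for the multiplication operator. On each arc $\delta_k^{(\pm)}\subset\mathbb T\setminus{\sf S}$ the symbol $\omega$ is $C^1$ with $\omega'\neq 0$, so Lemma~\ref{Omegap:lem} applies and furnishes a unitary $U_k: L^2(\delta_k^{(\pm)})\to L^2(\omega(\delta_k^{(\pm)}))$ intertwining the restriction of $\boldsymbol\Omega$ with multiplication by $\lambda$, and showing this piece of the spectrum to be simple. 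Since $\omega(\delta_k^{(\pm)})=\Lambda$ for every $k$, the target is $L^2(\Lambda)$ in each case, and assembling $U=\bigoplus_k U_k$ yields a unitary from $L^2(\delta^{(\pm)})$ onto
\[
\bigoplus_{k=1}^{n^{(\pm)}} L^2(\Lambda) = L^2(\Lambda;\mathbb C^{n^{(\pm)}})
\]
carrying $\boldsymbol\Omega$ to multiplication by $\lambda$. This is precisely the asserted spectral representation, whence the multiplicity on $\Ran\1_{\Delta^{(\pm)}}$ equals $m_{\boldsymbol\Omega}(\Lambda)=n^{(\pm)}$.

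The one point requiring genuine care, and the step I regard as the crux, is the surjectivity $\omega(\delta_k^{(\pm)})=\Lambda$: each arc must map onto the \emph{whole} interval $\Lambda$, so that every summand contributes exactly one copy of $L^2(\Lambda)$ and no multiplicity is lost or double-counted. This rests on $\omega'$ having a constant sign on $\delta_k^{(\pm)}$ (by the definition \eqref{eq:dpm} of $\Delta^{(\pm)}$), which makes $\omega$ strictly monotone, hence injective, on the arc, together with the endpoint values $\omega(\alpha_k^{(\pm)}),\omega(\beta_k^{(\pm)})\in\{\lambda_1,\lambda_2\}$ recorded just before the theorem. These facts are guaranteed by assumption \eqref{eq:TH}, which keeps $\Lambda$ away from the exceptional set $\Lambda_{\rm exc}$ and thus prevents $\omega'$ from vanishing on $\omega^{-1}(\Lambda)$. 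Once this is in hand, the direct-sum bookkeeping is routine.
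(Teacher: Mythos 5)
Your proposal is correct and follows essentially the same route as the paper: the identity $E_{\boldsymbol\Omega}(\Lambda)\1_{\Delta^{(\pm)}}=\sum_{k}\1_{\delta_k^{(\pm)}}$ from \eqref{eq:AUx}, reduction to each arc $\delta_k^{(\pm)}$ via Lemma~\ref{Omegap:lem}, and the key fact $\omega(\delta_k^{(\pm)})=\Lambda$ giving multiplicity $n^{(\pm)}$. Your explicit assembly of the unitary $U=\bigoplus_k U_k$ and the justification of surjectivity via strict monotonicity merely spell out details the paper leaves implicit, so nothing further is needed.
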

 
 \begin{proof}
 It follows from equality \e{eq:AUx} that
  \begin{equation*}
  E_{\boldsymbol\Omega}(\Lambda) \1_{\Delta^{(\pm)}}= 
  \1 _{\omega^{-1}(\Lambda)}  \1_{\Delta^{(\pm)}}=
  \sum_{k=1}^{n^{(\pm)}} \1_{\delta_{k}^{(\pm)} }.
%\label{eq:AUy}
\end{equation*} 
According to Lemma~\ref{Omegap:lem} the spectrum of the operator $\boldsymbol\Omega   \1_{\Delta^{(\pm)}}$ on each of $ n^{(\pm)}$ disjoint arcs $\delta_{k}^{(\pm)} $ is simple. 
 Since $\omega (\delta^{(\pm)}_k ) = \Lambda$ 
for each $k = 1, 2, \dots, n^{(\pm)}$, 
the spectral multiplicity of $\boldsymbol\Omega\1_{\Delta^{(\pm)}}$ on $\Lambda$ 
equals $n^{(\pm)}$, as required. 
\end{proof}

%\marginpar{figure 2 -- arrows}

\begin{figure}
\resizebox{12cm}{!}{
\begin{tikzpicture}
\draw [very thick] plot[smooth, tension=.7] coordinates 
{(-5,-2) (-4,-2) (-3.5,-1.5) (-3,-0.5) (-1.5,1) (-1,1.5) 
(0,2) (1,1) (2,0) (2.5,-0.5) (3,-1.5) (3.5,-2.5) (4,-3) (5.5,-3)};
\draw[very thick] plot[smooth, tension=.7]coordinates{(5.5,2.5)(6.0,3.0) (6.5,3)};
\draw[very thick] plot[smooth, tension=.7]coordinates{(6.5,1.5)(7,2)(7.5,2)};
\draw[very thick]plot[smooth, tension=.7]coordinates{(7.5,-2) (8.5,-2.5) (9,-2)};
\draw (-5,1) node (v4) {} -- (9,1);
\draw (-5,-1.5) node (v3) {} -- (9,-1.5);
\draw[ultra thin, -latex] (-5,-4) node (v1) {} -- (10,-4) node (v2) {};
\draw [ultra thin, -latex] (-5,-4) -- (-5,3.5);
\draw [double][ultra thick](-5,-1.5) --  (-5,1)
node[pos=.5, left] {$\Lambda$};
\draw[dashed] (5.5,2.5) -- (5.5,-3) node (v11) {};
\draw[dashed] (7.5,2) -- (7.5,-1.5) node (v12) {};
\draw [dashed](-3.5,-1.5) node (v16) {} -- (-3.5,-4) node (v5) {};
\draw[dashed] (-1.5,1) -- (-1.5,-4) node (v6) {};
\draw[dashed] (1,1) -- (1,-4) node (v7) {};
\draw[dashed] (3,-1.5) -- (3,-4) node (v8) {};
\draw [double][ultra thick, blue](-3.5,-4) -- (-1.5,-4)
node[pos=.5, above, black] {$\delta^{(-)}_1 $};
\draw  [double][ultra thick, blue ](1,-4) -- (3,-4)
node[pos=.5, above, black] {$\delta^{(+)}_1 $};
\draw[dashed] (v11) -- (5.5,-4) node (v15) {};
\draw[dashed] (v12) -- (7.5,-4);
\coordinate[label=below:$\eta_1$] (A) at (5.5, -4);
\coordinate[label=below:$\eta_2$] (A) at (6.5, -4);
\coordinate[label=below:$\eta_3$] (A) at (7.5, -4);
\draw [dashed](6.5,3) -- (6.5,-4);
\end{tikzpicture}}
\caption{
Example: $\omega^{-1} (\Lambda) = \delta^{(+)}_1 \cup \delta^{(-)}_1 $, 
${\sf S}^{(-)} = {\sf S}^{(-)} (\Lambda) = \{\eta_1\}$, 
${\sf S}^{(+)} = \{\eta_2, \eta_3\}$, 
${\sf S}^{(+)}  (\Lambda)= \{\eta_3\}$. }
\end{figure}
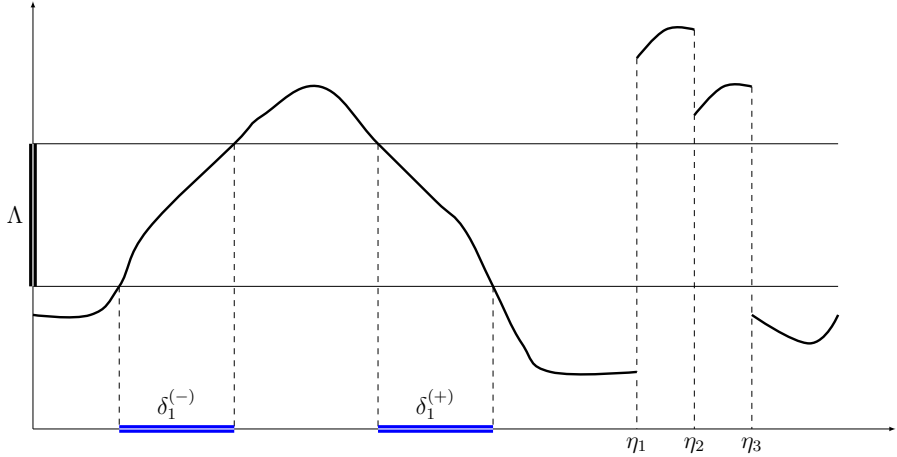

\subsection{Toeplitz operators: diagonalization} 
\label{diag:subsect} 
As explained in \cite{SobYaf}, the Toeplitz operator $T(\omega)$ 
can be diagonalized by a unitary operator $\Phi = \Phi(\omega)$ whose 
integral kernel is given by the generalized eigenfunctions of $T(\omega)$. 
In the simple spectrum case, the operator $T(\omega)$ has one generalized eigenfunction 
$\varphi(z; \lambda)$, $z\in \mathbb D$, 
a.e. $\lambda\in (\gamma_1, \gamma_2)$, and the unitary operator 
$\Phi:{\mathbb H}^2\to L^2 (\gamma_{1}, \gamma_{2})$ is defined by 
the formula 
 \begin{equation}
(\Phi f)(\lambda) = \lim_{r\to 1-0}\int_{\mathbb T}\ov{\varphi(r\z;\lambda) } f(\z) d{\bf m}(\z),
\label{eq:Ros8}
\end{equation}
which is a more precise version of \eqref{eq:Ros3}.  
The adjoint operator is given by the formula
\begin{align*}
(\Phi^* g)(z) = \int\limits_{\gamma_1}^{\gamma_2} \varphi(z; \lambda) g(\lambda) d\lambda,\q z\in\mathbb D,
%\label{eq:RosX}
\end{align*}
for all $g\in L^2(\gamma_1, \gamma_2)$. 
Thus, in view of \eqref{eq:Ros4}, the representation
 \begin{equation}
(e^{-iT t} f)(z)= \int_{\gamma_{1}}^{\gamma_{2}} 
\varphi(z;\lambda)  e^{-i\lambda t} \tilde{f}(\lambda) d\lambda,
\q \tilde{f}=\Phi f,
\label{eq:Ros7}
\end{equation}
holds.   
Here we do not discuss the eigenfunctions in greater detail since for our purposes we need the 
representation \eqref{eq:Ros7} only for the singular symbol \eqref{eq:singg}, 
for which $\varphi$ is found explicitly, see \eqref{eq:RS}.

 \subsection{Toeplitz operators: 
 piecewise continuous symbols} 

Now we turn our attention to Toeplitz operators with piecewise continuous symbols.  
%To this end we need to consider the singular points.
Let ${\sf S}^{(\pm)}$ consist of  $\eta_k \in {\sf S}$ such that 
\begin{equation*}
\pm (\omega(\eta_k - 0)-\omega (\eta_k + 0))>0,\q \eta_k\in {\sf S}^{(\pm)},
% \label{eq:Gg}
 \end{equation*}
 and let ${\sf S}_{0}\subset {\sf S}$ be the set of those $\eta_k$ where 
$\omega(\eta_k - 0) = \omega(\eta_k + 0)$ but the derivative $\omega'(\z)$ is not continuous 
at the point $\eta_k$. 
Thus ${\sf S}$ is the disjoint union 
\begin{align}\label{SS:eq}
\sf {\sf S} = {\sf S}^{(+)}\cup {\sf S}^{(-)}\cup {\sf S}_{0}.
\end{align}
We associate with every discontinuity $\eta_k\in {\sf S}^{(\pm)}$ the interval (the ``jump"):
\begin{align}\label{Lk:eq}
\Lambda_{k}:= \Lambda_{\eta_k} 
= [\omega(\eta_k\pm 0), \omega (\eta_k \mp 0)],\q \eta_k\in {\sf S}^{(\pm)},
\end{align}
cf. \eqref{eq:Jump2}.
 The condition 
\eqref{eq:TH} guarantees that 
$\omega(\eta_{k}\pm 0)\not\in \Lambda$ for all $\eta_{k}\in {\sf S}$, and hence    
each interval 
$\Lambda_k$  either contains $\Lambda$, or is 
disjoint from $\Lambda$. 
We are interested only in those singular points $\eta_k$, for which 
$\Lambda\subset \Lambda_k$, and we denote 
this subset by ${\sf S}(\Lambda)$. Introduce also the notation 
${\sf S}^{(\pm)}(\Lambda) = {\sf S}(\Lambda)\cap {\sf S}^{(\pm)}$ 
and put 
\begin{align}
  s^{(\pm)  } = s^{(\pm)}(\Lambda) = 
\#\{ {\sf S}^{(\pm)}(\Lambda)\}.
\label{eq:countZ}\end{align} 
  Note that ${\sf S}(\Lambda) = {\sf S}^{(+)}(\Lambda)\cup{\sf S}^{(-)}(\Lambda)$.  
 All these  objects are illustrated in Fig.~2. 
 
Now we are in a position to  quote \cite[Theorems~6.5, 6.6]{SobYaf}. 

\begin{theorem}\label{Ros_smooth:thm}
Suppose that $\omega$ satisfies Condition \ref{SM:cond}, and that 
an interval $\Lambda$ satisfies condition \eqref{eq:TH}. 
Let the numbers $n^{(\pm)}= n^{(\pm)}(\Lambda) $ and $ s^{(\pm)}= s^{(\pm)}(\Lambda)$ be defined by \e{eq:AU} and \e{eq:countZ}, respectively.
Then $ n^{(+)} + s^{(+)} =  n^{(-)} + s^{(-)}$ and  the 
spectral representation of  
the operator $T(\omega)$ restricted to the 
subspace $E_{T(\omega)} (\Lambda){\mathbb H}^{2}$ 
is realized on the space 
$L^{2}(\Lambda; {\mathbb C}^m)$ where  
 \begin{align}\label{count:eq}
m = n^{(\pm)} + s^{(\pm)}
\end{align}  
$($for both signs $``\pm")$. 
In other words, the spectral multiplicity $m_{T(\omega)}(\Lambda)$ 
of $T(\omega)$ on $\Lambda$ is finite and it coincides with the number \e{count:eq}.
\end{theorem}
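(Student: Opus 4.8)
The plan is to establish two things separately: the balance identity $n^{(+)}+s^{(+)}=n^{(-)}+s^{(-)}$, which makes the number $m$ in \eqref{count:eq} well defined, and then the computation of the spectral multiplicity itself via an explicit diagonalization.

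The balance identity I would treat as an elementary consequence of the periodicity of $\omega$. Fix $\lambda\in\Lambda$ and regard $\omega$ as a piecewise $C^1$ function on $\mathbb T\cong\mathbb R/(2\pi\mathbb Z)$ that, by \eqref{eq:TH}, avoids the value $\lambda$ at all critical points and at all one-sided limits. As $\z$ runs once around $\mathbb T$, every continuous passage of $\omega$ through the level $\lambda$ from below occurs at a point of $\delta^{(-)}$ and every passage from above at a point of $\delta^{(+)}$, contributing to $n^{(-)}$ and $n^{(+)}$ respectively; likewise each jump in ${\sf S}^{(-)}(\Lambda)$ crosses $\lambda$ upward and each jump in ${\sf S}^{(+)}(\Lambda)$ crosses it downward. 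Since $\omega$ returns to its starting value after a full loop, the total number of upward crossings equals the total number of downward crossings, i.e. $n^{(-)}+s^{(-)}=n^{(+)}+s^{(+)}$. Thus $m$ does not depend on the choice of sign.

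For the multiplicity I would build the spectral representation from the generalized eigenfunctions, following the diagonalization scheme recalled in Section~\ref{diag:subsect}. The multiplicity $m_{T(\omega)}(\Lambda)=\dim\mathfrak N$ equals, for a.e. $\lambda\in\Lambda$, the dimension of the space of admissible solutions $\varphi$ of the generalized equation $T(\omega)\varphi=\lambda\varphi$. Writing $u=(\omega-\lambda)\varphi$, this equation is exactly the condition $u\in\mathbb H^2_-$ together with $\varphi=u/(\omega-\lambda)\in\mathbb H^2_+$ (in the appropriate generalized sense), that is, a scalar Riemann--Hilbert / Wiener--Hopf problem with the piecewise continuous coefficient $\omega-\lambda$. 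I would solve it by factorizing $\omega(\z)-\lambda$ into a smooth non-vanishing part, a rational part carrying the simple zeros at the $n^{(+)}+n^{(-)}$ preimage points $\omega^{-1}(\lambda)$, and singular factors of power type $\bigl((\z-\eta_k)/\cdots\bigr)^{\vk_k}$ at the jump points $\eta_k\in{\sf S}(\Lambda)$, the exponent $\vk_k$ being fixed by the logarithm of the jump ratio $(\omega(\eta_k+0)-\lambda)/(\omega(\eta_k-0)-\lambda)$. The dimension of the solution space is then read off as a partial-index count: each preimage point in $\omega^{-1}(\lambda)$ and each jump in ${\sf S}(\Lambda)$ furnishes a candidate boundary singularity of $\varphi$, and it yields an admissible generalized eigenfunction exactly when it lies on the analytic side of the factorization and is mild enough to be captured by the $r\to1$ limit defining $\Phi$ in \eqref{eq:Ros8}. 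The decisive local computation is that a zero of $\omega-\lambda$ from an arc of one fixed orientation (say $(+)$), together with a jump of the matching orientation, each contribute one admissible \emph{non}-square-integrable eigenfunction, while the candidates of the opposite orientation would have to be square-integrable and are therefore excluded by the absolute continuity of $T(\omega)$. Consistently with Section~\ref{diag:subsect}, the zeros (whose location moves with $\lambda$) give the stronger, half-order ``thick'' singularities, and the jumps (anchored at the fixed $\eta_k$) give the weaker ``thin'' power-type singularities governed by $\vk_k$. This selects precisely $n^{(+)}+s^{(+)}$ independent eigenfunctions, equivalently $n^{(-)}+s^{(-)}$ by the balance identity. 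Collecting them into the integral kernel of $\Phi$ and checking that the resulting map is an isometry of $E_{T(\omega)}(\Lambda)\mathbb H^2$ onto $L^2(\Lambda;\mathbb C^m)$ intertwining $T(\omega)$ with multiplication by $\lambda$ completes the argument.

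I expect the main obstacle to be exactly this last local analysis: correctly identifying the strength and the side of the boundary singularity produced by each zero and each jump, and proving that the count is sharp---that no further admissible solutions exist and that the selected ones are genuinely independent and complete. The presence of both real simple zeros and complex power-type jump singularities in the same Wiener--Hopf coefficient forces the index bookkeeping to be done with care, and the agreement of the two sign counts already guaranteed by the balance identity serves as a useful internal consistency check.
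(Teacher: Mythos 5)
You should know at the outset that the paper contains no proof of Theorem \ref{Ros_smooth:thm} at all: it is quoted verbatim from \cite[Theorems~6.5, 6.6]{SobYaf}, and the only argument in the present paper bearing on it is the independent scattering-theoretic proof \emph{for smooth symbols} (Corollary \ref{compl1}, via Theorem \ref{omega} and the completeness Theorem \ref{compl}). Your eigenfunction route is therefore in the spirit of the cited companion paper rather than of anything proved here; note also that one cannot instead extend the paper's scattering proof to the general piecewise continuous case from within this paper, since the asymptotic completeness argument (Theorem \ref{MainL}) consumes Theorem \ref{Ros_smooth:thm} as an input and the reasoning would be circular. Your first half is fine: the balance identity $n^{(+)}+s^{(+)}=n^{(-)}+s^{(-)}$ does follow from the crossing count you describe, since under \eqref{eq:TH} every preimage of a fixed $\lambda\in\Lambda$ is a transversal crossing lying in exactly one arc $\delta^{(\pm)}_k$, each such arc carries exactly one preimage (because $\omega(\delta^{(\pm)}_k)=\Lambda$ monotonically), the jumps crossing the level $\lambda$ are precisely those with $\eta_k\in {\sf S}(\Lambda)$, and periodicity of $\omega$ equates upward and downward crossings.

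The second half, however, has a genuine gap, and in fact two. First, the ``decisive local computation'' — that each crossing of one fixed orientation yields exactly one admissible generalized eigenfunction and that the count is sharp — is asserted, not proved; and your exclusion mechanism is not sound as stated: a solution of the Riemann--Hilbert problem that fails your admissibility rule need not be square-integrable, so the absolute continuity of $T(\omega)$ by itself does not eliminate it. Since $\omega-\lambda$ is real-valued and \emph{vanishes} on $\mathbb T$ at the $n^{(+)}+n^{(-)}$ preimage points, the coefficient lies outside the standard invertible-symbol Wiener--Hopf index theory, so the ``partial-index count'' you invoke has no off-the-shelf justification — this is precisely where the real work of \cite{SobYaf} is done, via explicit model eigenfunctions. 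Second, your structural picture (one eigenfunction per admissible singularity, localized there) is contradicted by the paper's own examples: for $\omega_{\rm r}(\z)=(\z+\z^{-1})/2$ the multiplicity is one ($n^{(\pm)}=1$, $s^{(\pm)}=0$), yet the unique eigenfunction \eqref{eq:Reg} is singular at \emph{both} preimages $z_\pm=\lambda\pm i\sqrt{1-\lambda^2}$ simultaneously, and likewise \eqref{eq:RS} is singular at both jump points $\z_1,\z_2$. So the solution space has dimension $m$, but its elements are not enumerated one-to-one by individual crossings, and any correct proof must count dimensions globally rather than by your per-singularity selection rule. Finally, the unitarity of $\Phi$ in \eqref{eq:Ros8} onto $L^2(\Lambda;\mathbb C^m)$ — the isometry and, crucially, the completeness of the eigenfunction system, which is the whole content of the multiplicity statement — is only announced in your last sentence, not established.
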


By this theorem, the spectrum of the 
Toeplitz operator with the symbol in Fig. 2 has multiplicity two on $\Lambda$.

\subsection{Examples} 
\label{examples:subsect}

Let us give two examples illustrating 
Theorem \ref{Ros_smooth:thm}. Both examples were mentioned in \cite{Ros2} and discussed in more detail in \cite{SobYaf}. 

\begin{example}%\label{regg}
First we consider the regular symbol 
 \begin{equation*}
 %\label{eq:regg}
 \omega_{\rm r} (\z)= (\z+\z^{-1})/2,
 \end{equation*}
 for which $\sigma(T(\omega_{\rm r}))= [-1,1]$. 
Now we have ${\sf S}= \varnothing$, 
$\Delta_0 = %{\sf S}_{\rm cr}= 
\{-1, 1\}\subset {\mathbb T}$,
 $\Lambda_{\rm exc}= \Lambda_{\rm cr}= \{-1, 1\}\subset {\mathbb R}$, $\Delta^{(+)}=(1,-1) \subset \mathbb T$, 
 $\Delta^{(-)}=(-1,1)  \subset \mathbb T$, $n^{(\pm)}=1$, $s^{(\pm)}=0$. By Theorem \ref{Ros_smooth:thm}, the spectrum of the operator $T(\omega_{\rm r})$ is simple, and, 
 according to Definition \ref{class}, it is thick. 
 
 The eigenfunctions of the Toeplitz operator $T(\omega_{\rm r})$ equal 
\begin{equation}
\varphi_{\rm r}(z;\lambda) =\sqrt{\frac{2}{\pi}}\frac{(1-\lambda^{2})^{\frac{1}{4}}}{1-2\lambda z + z^{2}}, \q \lambda\in (-1,1).
\label{eq:Reg}
\end{equation}
They  are singular at the points 
\[z_\pm = \lambda\pm i\sqrt{1-\lambda^{2}}.
\]
Note that function \e{eq:Reg} belongs to the Hardy class ${\mathbb H}^p$ for any $p<1$ but $\varphi_{\rm r}(\cdot;\lambda)\not\in{\mathbb H}^1$.
\end{example}
 
 \begin{example}%\label{singg}
 The simplest singular symbol is given by  the indicator 
 \begin{equation}\label{eq:singg}
\omega_{\rm s} (\z)=\1_{(\z_{1}, \z_{2})} (\z)
 \end{equation}
of an arc $(\z_{1}, \z_{2})\subset {\mathbb T}$; 
then $\sigma(T(\omega_{\rm s}))= [0,1]$.
Now we have ${\sf S}= \{\z_{1}, \z_{2} \}$, 
${\sf S}^{(+)}= \{  \z_{2} \}$, ${\sf S}^{(-)}= \{  \z_1 \}$, $s^{(\pm)}=1$
and $\Lambda_{\rm exc}= \Lambda_{\rm thr}= \{0,1 \}$, 
 $\Delta^{(\pm)}=\varnothing$,  $n^{(\pm)}=0$. 
 By Theorem \ref{Ros_smooth:thm}, the spectrum of 
 the operator $T(\omega_{\rm s})$ is again simple, and, according to Definition \ref{class}, it is thin. 
 
 The eigenfunctions of the Toeplitz operator $T(\omega_{\rm s})$ equal
 \begin{equation}
\varphi_{\rm s} (z;\lambda) = \varkappa(\lambda)  (1-z / \z_1)^{-1/2-i\sigma (\lambda)} 
(1-z/\z_{2})^{-1/2+i\sigma (\lambda)},
\label{eq:RS}
\end{equation}
where 
 \begin{equation}
 \sigma (\lambda)= \frac{1}{2\pi}\ln(\lambda^{-1}-1)
\label{eq:RS1}
\end{equation}
and 
 \begin{equation}
 \varkappa (\lambda)= \sqrt{\frac{|\z_{2}-\z_{1}|}{2\pi\lambda}}
e^{-\pi\sigma (\lambda){\mathbf m}(\z_1,  \z_2)}.
% 
% (\z_{2}/\z_{1})^{i\sigma (\lambda)/2}, \q
 %\arg (\z_{2}/\z_{1})\in [0,2\pi).
\label{eq:RS2}\end{equation} 
 \end{example}

As mentioned in the Introduction,  formulas \eqref{eq:Reg} and \eqref{eq:RS} show that the  
eigenfunctions  of the Toeplitz operators have stronger singularities in the smooth case, and 
that   in the singular case the location of these singularities 
is independent of $\lambda$.

 % We emphasize that the results of this subsection are due to M.~Rosenblum \cite{Ros3} and their detailed presentation can be found in \cite{SobYaf}.

\section{Toeplitz versus multiplication operators. Smooth symbols}
%The main scattering channel
\label{mult:sect}

\subsection{Scattering theory. Basic notions}\label{ST:subsect}
We refer to the books \cite{RS4}  or \cite{Yafaev}  for  detailed information.

Let $A$ and $B$ be self-adjoint operators in, possibly, 
different Hilbert spaces $\mathcal{ H}$ and $\mathcal{ G}$, respectively, and 
 let $J: \mathcal{H}\to\mathcal{G}$ be a bounded operator.

\begin{definition} %\label{WO} 
Under the assumption of their existence,
the strong limits
\begin{equation}
\slim_{t\to\pm\infty} e^{i Bt} Je^{- i At} P^{\rm(ac)}_{A} =: W_{\pm} (B,A;J)    
 \label{eq:WO}\end{equation}
are called the wave operators  for the pair of 
 self-adjoint operators $A$ and $B$ and the  ``identification" $J$. 
 \end{definition} 
 
Here and below all statements about  the wave operators $W_{\pm}(B, A; J)$ 
(as well as about other objects labelled by $``{\pm} "$) are understood as two independent statements.   
 
 Obviously, the wave operator $W_{\pm} (B,A;J)$ exists if the limit $e^{i Bt} Je^{- i At} f$   as $t\to\pm\infty$ exists  on a set of vectors $f$ dense in the subspace $\mathcal{ H}^{\rm(ac)}_{A} $.
 
 Let us list some elementary properties of wave operators following from the mere fact of their existence:
 \begin{enumerate}[\rm(a)]
\item
The wave operators are bounded and
\[
\| W_{\pm} (B,A;J) \|\leq \| J\|.
\]

\item
The wave operators enjoy the intertwining property
\begin{align}\label{intertwine:eq}
E_{B}(X)W_{\pm} (B,A;J) = W_{\pm} (B,A;J) E_{A}(X),
\end{align}
where $X \subset {\mathbb R}$ is an arbitrary Borel set.

\item
The ranges $\Ran W_{\pm} (B,A;J)$ of the wave operators satisfy
 the inclusions 
\begin{equation}
\Ran W_{\pm} (B,A;J)\subset \mathcal{ G}^{\rm(ac)}_{B} ,
\label{eq:WO1} \end{equation}
 and their closures
      are invariant subspaces of the operator $B$.

\item
If
\begin{equation}
\lim_{t\to\pm \infty}\| J e^{- i At} P^{\rm(ac)}_{A} f\| =\| P^{\rm(ac)}_{A} f\|,
\label{eq:WOac}\end{equation}
then $\|W_{\pm} (B,A;J) f\| = \| P^{\rm(ac)}_{A} f\|$. Thus,
the wave operator $W_{\pm} (B,A;J)$ is isometric on $\mathcal{H}^{\rm(ac)}_{A} $ if
condition \e{eq:WOac} is satisfied for 
a set of vectors $f$ dense in   $\mathcal{ H}^{\rm(ac)}_{A}$. 
In particular, for $\mathcal H = \mathcal G$ 
the wave operators $W_{\pm}(B, A; I)$ are isometric on $\mathcal{H}^{\rm(ac)}_{A} $.

\item
If the  operator $W_{\pm} (B,A;J)$ is isometric on $\mathcal{H}^{\rm(ac)}_{A} $, then
   it follows from (b) that the restriction of the operator $B$ to the subspace 
$\Ran W_{\pm}(B, A; J)$ is unitarily equivalent to the absolutely continuous part 
$A^{\rm(ac)}$ of the operator $A$.

 \item
If $f= W_{\pm} (B,A;J) f_{0}$, then 
\begin{equation}
\lim_{t\to\pm \infty}\| e^{-i Bt} f -J e^{-i At} P^{\rm(ac)}_{A} f_{0}\|=0.
 \label{eq:WO1x}\end{equation}
 For a given $f\in \Ran W_{\pm} (B,A;J) $, the choice of $f_{0}$ 
 satisfying \e{eq:WO1x} is in general not unique.
 One can set $f_{0}= W_{\pm} (B,A;J)^* f$ if the  operator $W_{\pm} (B,A;J)$ is isometric on 
 $\mathcal{H}^{\rm(ac)}_{A} $.
 
    \item 
If $J$ is compact, then $J e^{-iAt}f\to 0$ as $ t\to \pm\infty$ for all 
$f\in\mathcal{H}^{\rm(ac)}_{A}$, so that 
$W_{\pm} (B,A;J) = 0$.
 
    \end{enumerate}

\begin{definition} %\label{WO1} 
A  wave operator  $W_{\pm} (B,A;J)$ is called complete if the equality holds in \e{eq:WO1}:
\begin{equation*}
\Ran W_{\pm} (B,A;J) = \mathcal{G}^{\rm(ac)}_{B} .
% \label{eq:WO1c}
 \end{equation*}
 \end{definition}
 
  If the  operator  $W_{\pm} (B,A;J)$ is isometric on 
 $\mathcal{H}^{\rm(ac)}_{A} $ and complete, then the  absolutely continuous parts $A^{\rm(ac)}  $ and $B^{\rm(ac)}  $ of the operators $A$ and $B$
are unitarily equivalent.
 
Let us note an important special case  $\mathcal{H} = \mathcal{G}$, 
$J=I$ (the identity operator). For short, we  write $W_{\pm}(B, A; I) = W_{\pm}(B, A)$. 
Of course, the operators 
$W_{\pm} (B, A)$ are isometric on the subspace ${\mathcal H}^{\rm(ac)}_{A}$. 
The completeness of $W_{\pm} (B,A)$ is equivalent to the existence of $W_{\pm} (A,B)$, and in this case  
\begin{align*}
 W_{\pm}(A, B) = W_{\pm}^*(B, A).
\end{align*}

%\subsection{Trace class scattering}

\subsection{Existence of wave operators}%\label{Cook:subsect}

%%%%%%%%%

 The following condition (known as Cook's criterion) 
 of the existence  of wave operators \e{eq:WO} is quite elementary, but it requires 
 the knowledge of the ``free" evolution $e^{-iAt}$. In view of our applications, we assume 
 that $A$ and $B$ are bounded operators. 
 
\begin{theorem} \label{Cook}
Suppose that the operator $A$ is absolutely continuous and 
\[
\int_{0}^{\pm\infty} \| (BJ-J A)e^{-iAt} f\| dt <\infty
\]
for a set of elements 
$f$ dense in $\mathcal{H}$. Then the corresponding wave operator $W_{\pm} (B, A; J)$ exists.
\end{theorem}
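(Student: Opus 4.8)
The plan is to establish Cook's criterion \ref{Cook} directly from the definition of the wave operator \eqref{eq:WO} by showing that the limit defining $W_\pm(B,A;J)$ exists on a suitable dense set. First I would observe that since $A$ is assumed absolutely continuous, we have $P^{\mathrm{(ac)}}_A = I$, so the wave operator \eqref{eq:WO} reduces to the strong limit of $U(t) := e^{iBt}Je^{-iAt}$ as $t\to\pm\infty$. By the elementary remark made just after the definition \eqref{eq:WO} in the excerpt, it suffices to prove that $U(t)f$ converges as $t\to\pm\infty$ for every $f$ in the given dense set $\mathcal D$ on which the integral is finite.

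The key computation is to differentiate $U(t)f$ in $t$. Since $A$ and $B$ are bounded, $t\mapsto U(t)f$ is strongly differentiable and
\begin{equation*}
\frac{d}{dt}\bigl(e^{iBt}Je^{-iAt}f\bigr)
= e^{iBt}\,i\bigl(BJ - JA\bigr)e^{-iAt}f .
\end{equation*}
Integrating from $s$ to $\tau$ (with $s,\tau$ of the same sign) and using that $e^{iBt}$ is unitary, I would estimate
\begin{equation*}
\bigl\| U(\tau)f - U(s)f \bigr\|
= \Bigl\| \int_s^\tau e^{iBt}\,i(BJ-JA)e^{-iAt}f \, dt \Bigr\|
\le \int_s^\tau \bigl\| (BJ-JA)e^{-iAt}f \bigr\| \, dt .
\end{equation*}
By the hypothesis the integral $\int_0^{\pm\infty}\|(BJ-JA)e^{-iAt}f\|\,dt$ converges, so its tail tends to zero; hence the right-hand side can be made arbitrarily small for $s,\tau$ large of the appropriate sign. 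This is the Cauchy criterion, and therefore $\lim_{t\to\pm\infty}U(t)f$ exists for each $f\in\mathcal D$.

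Finally I would pass from the dense set $\mathcal D$ to all of $\mathcal H$. The operators $U(t)$ are uniformly bounded, $\|U(t)\|\le\|J\|$, so strong convergence on the dense set $\mathcal D$ together with this uniform bound yields strong convergence on the closure, i.e.\ on all of $\mathcal H = \mathcal H^{\mathrm{(ac)}}_A$; this is a standard $3\varepsilon$-argument. Thus $W_\pm(B,A;J)$ exists. The only point requiring any care is the justification of the fundamental theorem of calculus for the vector-valued map $t\mapsto U(t)f$, but this is immediate because boundedness of $A$ and $B$ makes $t\mapsto e^{-iAt}f$ and $t\mapsto e^{iBt}g$ norm-differentiable with continuous derivatives, so $U(t)f$ is continuously differentiable and the integral representation above is valid. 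I do not anticipate a genuine obstacle here; the argument is the classical proof of Cook's criterion specialized to the bounded setting.
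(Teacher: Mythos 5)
Your proof is correct and is precisely the classical argument for Cook's criterion: the paper itself states Theorem \ref{Cook} without proof, deferring to the standard references \cite{RS4} and \cite{Yafaev}, and those sources prove it exactly as you do, via differentiation of $e^{iBt}Je^{-iAt}f$, the Cauchy criterion from integrability of $\|(BJ-JA)e^{-iAt}f\|$, and extension from the dense set using the uniform bound $\|e^{iBt}Je^{-iAt}\|\le\|J\|$. Your attention to the reduction $P^{\rm(ac)}_A=I$ and to the vector-valued fundamental theorem of calculus (valid here since $A$ and $B$ are bounded) covers all the points the paper's cited sources address.
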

 
On the contrary, the trace class method treats the operators $A$ and $B$ on an equal footing. 
The ideal of trace class operators is denoted by $\mathfrak{S}_{1}$. First, we recall the classical  Kato - Rosenblum theorem.

 \begin{theorem} \label{KR} 
 Suppose that $\mathcal{ H}=\mathcal{ G}$, $J=I$ and
  $ B- A\in \mathfrak { S}_{1}$.
 Then the wave operators  $W_{\pm}(B, A)$ exist, are isometric on $\mathcal{ H}^{\rm(ac)}_{A}$ and are complete, that is, 
 \[
 \Ran W_{\pm} (B, A ) 
=\mathcal{ H}^{\rm(ac)}_B.
\]
 \end{theorem}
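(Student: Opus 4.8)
The plan is to reduce the three assertions to a single analytic core---the existence of the limits \eqref{eq:WO}---and to obtain isometry and completeness from general principles. Since here $J=I$, isometry is immediate: as $e^{-iAt}$ is unitary, $\|e^{-iAt}P^{\rm(ac)}_{A}f\|=\|P^{\rm(ac)}_{A}f\|$ for every $t$, so condition \eqref{eq:WOac} holds trivially and $W_\pm(B,A)$ is isometric on $\mathcal{H}^{\rm(ac)}_{A}$. Completeness I would deduce from symmetry: the hypothesis $B-A\in\mathfrak{S}_1$ is symmetric in the two operators, so once existence is established in general it applies with the roles of $A$ and $B$ interchanged, yielding $W_\pm(A,B)$. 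By the equivalence recorded above for the case $J=I$, the existence of $W_\pm(A,B)$ is exactly the completeness of $W_\pm(B,A)$, and moreover $W_\pm(A,B)=W_\pm(B,A)^*$. Thus everything rests on proving existence.

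For existence, the first step is to factor the perturbation. Writing $V=B-A$ and using $V\in\mathfrak{S}_1$, I would set $V=G_2^*G_1$ with $G_1,G_2$ in the Hilbert--Schmidt ideal $\mathfrak{S}_2$ (e.g.\ from the singular value decomposition $V=\sum_n s_n\langle\,\cdot\,,\phi_n\rangle\psi_n$, take $G_1=\sum_n s_n^{1/2}\langle\,\cdot\,,\phi_n\rangle e_n$ and $G_2=\sum_n s_n^{1/2}\langle\,\cdot\,,\psi_n\rangle e_n$). The crux is then the Rosenblum $L^2$-estimate: if $\mathcal{D}\subset\mathcal{H}^{\rm(ac)}_{A}$ denotes the dense set of vectors $g$ whose spectral measure $d\|E_A(\lambda)g\|^2$ is absolutely continuous with bounded, compactly supported density, then
\begin{equation*}
\int_{-\infty}^{\infty}\|G_1 e^{-iAt}g\|^2\,dt\le C\,\|G_1\|_{\mathfrak{S}_2}^2\,\|g\|_*^2,\qquad g\in\mathcal{D},
\end{equation*}
where $\|g\|_*$ is the supremum of the spectral density of $g$. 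I would prove this by the spectral theorem: for any $h$ the measure $d\langle E_A(\lambda)g,h\rangle$ has density $m_{g,h}$ with $|m_{g,h}(\lambda)|^2\le\|g\|_*^2\,m_{h,h}(\lambda)$ by Cauchy--Schwarz for spectral measures, so $\langle e^{-iAt}g,h\rangle$ is the Fourier transform of an $L^1\cap L^2$ function; Plancherel together with summation over an orthonormal basis then gives the stated bound.

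The final, and hardest, step is to pass from this $L^2$-bound to convergence of $e^{iBt}e^{-iAt}g$. Differentiating gives $\tfrac{d}{dt}\,e^{iBt}e^{-iAt}g=i\,e^{iBt}G_2^*G_1 e^{-iAt}g$, and one is tempted to invoke Cook's criterion (Theorem~\ref{Cook}); however $\|G_1e^{-iAt}g\|$ is only square-integrable and need not be integrable, so the naive absolutely convergent bound is unavailable. This is the main obstacle. I would overcome it by the genuine trace-class scheme: either Pearson's symmetric estimate, which controls the double time-integral arising from $\|e^{iBt'}e^{-iAt'}g-e^{iBt}e^{-iAt}g\|^2$ using the $L^2$-bounds for $G_1$ relative to $A$ and for $G_2$ relative to $B$ simultaneously, or the stationary route, in which one represents $W_\pm(B,A)$ through the boundary values $G_1(A-\lambda\mp i0)^{-1}G_2^*$ of the sandwiched resolvent; the almost-everywhere existence of these boundary values is again furnished by the Rosenblum estimate via the theory of Poisson integrals of $L^1$ densities. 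Once convergence on the dense set $\mathcal{D}$ is secured, the uniform bound $\|e^{iBt}e^{-iAt}\|\le1$ extends the limit to all of $\mathcal{H}^{\rm(ac)}_{A}$, completing the existence proof and hence, by the first paragraph, the theorem.
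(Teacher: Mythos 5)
The paper does not actually prove this statement: Theorem~\ref{KR} is recalled as the classical Kato--Rosenblum theorem, with the proofs delegated to the cited monographs \cite{RS4,Yafaev}. Measured against that standard, your first paragraph already amounts to a complete derivation at the paper's own level of rigour: existence is the special case $J=I$ of Pearson's Theorem~\ref{WO2} (since $BJ-JA=B-A\in\mathfrak{S}_{1}$), isometry on $\mathcal{H}^{\rm(ac)}_{A}$ is property (d) of Section~\ref{ST:subsect} with condition \eqref{eq:WOac} holding trivially because $e^{-iAt}$ is unitary, and completeness follows, exactly as you say, from the symmetry of the hypothesis together with the equivalence, recorded at the end of Section~\ref{ST:subsect}, between completeness of $W_{\pm}(B,A)$ and existence of $W_{\pm}(A,B)$, with $W_{\pm}(A,B)=W_{\pm}^{*}(B,A)$. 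Your further sketch of the existence core follows the standard route of the cited references: the factorization $B-A=G_{2}^{*}G_{1}$ with $G_{1},G_{2}\in\mathfrak{S}_{2}$ is correct, your proof of the Rosenblum estimate (Cauchy--Schwarz for spectral measures, giving $|m_{g,h}|^{2}\le\|g\|_{*}^{2}\,m_{h,h}$, then Plancherel and summation over an orthonormal basis) is sound, and you rightly observe that Theorem~\ref{Cook} is unavailable because the decay of $\|G_{1}e^{-iAt}g\|$ is only $L^{2}$ in time.

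The one place where the proposal stops short of a proof is precisely the theorem's hard step: passing from the $L^{2}$-estimate to strong convergence of $e^{iBt}e^{-iAt}g$. You name the two standard mechanisms (Pearson's scheme and the stationary method) but execute neither, and the step is not routine. The naive move --- test $\bigl(e^{iBt'}e^{-iAt'}-e^{iBt}e^{-iAt}\bigr)g$ against the $L^{2}$-bound for $G_{2}$ relative to $B$ --- fails, because that bound is available only for vectors in the dense set of $B$-absolutely continuous vectors with bounded spectral density, and the difference vectors in question are not known to lie there; Pearson's actual argument circumvents this with an iterated Cauchy--Schwarz applied to a double time integral together with a separate limiting and density argument, while the stationary route likewise outsources the a.e.\ existence of the boundary values $G_{1}(A-\lambda\mp i0)^{-1}G_{2}^{*}$ and the identification of the resulting limits with the time-dependent wave operators. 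Since the paper itself treats both Theorem~\ref{KR} and Theorem~\ref{WO2} as quoted background, this is not a defect relative to the paper; but as a self-contained proof your text has a genuine gap at exactly this point --- honestly flagged, correctly attributed, yet not closed.
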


An extension of this result to arbitrary $J$ is due to D.~Pearson.
 
 \begin{theorem} \label{WO2} 
 Suppose that
 \begin{equation*}
   BJ- JA\in \mathfrak { S}_{1}.
%\label{eq:WO2}
 \end{equation*}
 Then the wave operators $W_{\pm} (B, A; J)$ exist.
 \end{theorem}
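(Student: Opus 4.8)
The plan is to verify the Cauchy criterion for the vectors $e^{iBt}Je^{-iAt}f$ as $t\to\pm\infty$ on a conveniently chosen dense subset of $\mathcal{H}^{\rm(ac)}_{A}$; by the elementary remark recorded just after \eqref{eq:WO}, existence of the limit on a dense set of $\mathcal{H}^{\rm(ac)}_{A}$ already yields the wave operator. Set $V := BJ-JA\in\mathfrak{S}_1$. Since in our applications $A$ and $B$ are bounded, the map $t\mapsto e^{iBt}Je^{-iAt}$ is norm-differentiable with derivative $ie^{iBt}Ve^{-iAt}$, so for any $f\in\mathcal{H}^{\rm(ac)}_{A}$
\[
e^{iBt_2}Je^{-iAt_2}f-e^{iBt_1}Je^{-iAt_1}f = i\int_{t_1}^{t_2}e^{iBs}Ve^{-iAs}f\,ds .
\]
Thus it suffices to exhibit a dense set of $f$ for which the right-hand side tends to $0$ as $t_1,t_2\to+\infty$ (and, separately, as $t_1,t_2\to-\infty$). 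For this dense set I would take the vectors $f\in\mathcal{H}^{\rm(ac)}_{A}$ whose image $\widehat f$ under a spectral representation of $A^{\rm(ac)}$ is bounded with compact support; such $f$ are dense.

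Next I would exploit the trace-class structure of $V$ through its Schmidt decomposition $V=\sum_k s_k(\,\cdot\,,\psi_k)\phi_k$, with $\sum_k s_k=\|V\|_{\mathfrak{S}_1}<\infty$ and orthonormal systems $\{\psi_k\}\subset\mathcal{H}$, $\{\phi_k\}\subset\mathcal{G}$. This splits the integral into rank-one pieces $\sum_k s_k\,Y_k(t_1,t_2)$, where $Y_k(t_1,t_2)=\int_{t_1}^{t_2}c_k(s)\,e^{iBs}\phi_k\,ds$ and $c_k(s)=(e^{-iAs}f,\psi_k)$. Because $f$ lies in the chosen dense set, each $c_k$ is the Fourier transform of a function $m_k(\lambda)=(\widehat f(\lambda),\widehat{\psi_k}(\lambda))_{\mathfrak{N}}$ that is $L^2$ with compact support, so $c_k\in L^2(\mathbb{R})$. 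The goal is then to prove $\|Y_k(t_1,t_2)\|\to 0$ for each $k$, together with an $(t_1,t_2)$-uniform bound permitting interchange of the limit with the summation over $k$.

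For a single rank-one term, expanding the square and using the spectral measure $d\mu_{\phi_k}(\mu)=d(E_B(\mu)\phi_k,\phi_k)$ of $B$ at $\phi_k$ yields the identity
\[
\|Y_k(t_1,t_2)\|^2 = \int_{\mathbb{R}}\Bigl|\int_{t_1}^{t_2}c_k(s)\,e^{is\mu}\,ds\Bigr|^2\,d\mu_{\phi_k}(\mu).
\]
On the absolutely continuous part of $d\mu_{\phi_k}$ this is easily controlled: the inner integral is the Fourier transform of the truncation $c_k\mathbf{1}_{[t_1,t_2]}$, and since $c_k\in L^2(\mathbb{R})$ these truncations tend to $0$ in $L^2(\mathbb{R})$ as $t_1,t_2\to+\infty$, so Plancherel's theorem (a Riemann--Lebesgue effect) handles the contribution. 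This is essentially the same mechanism underlying the Kato--Rosenblum theorem (Theorem \ref{KR}).

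The main obstacle, and precisely the feature making this a genuine extension to arbitrary $J$, is the singular part of $d\mu_{\phi_k}$: the target operator $B$ may carry rich singular (in particular point) spectrum, against which the $L^2$/Plancherel argument gives no information whatsoever. Resolving this is the delicate heart of the proof. I would represent the inner integral through a Cauchy-type kernel and analyse the boundary behaviour, as $t\to+\infty$, of $\int m_k(\lambda)\,(\mu-\lambda)^{-1}e^{it(\mu-\lambda)}\,d\lambda$ by a Sokhotski--Plemelj / limiting-absorption argument, aiming to show that the truncated integrals form a Cauchy family for $\mu$ outside a suitable exceptional set. Making this compatible with integration against the \emph{singular} measure $d\mu_{\phi_k}$, and securing the $(t_1,t_2)$-uniform estimate needed to sum over $k$, is the technically hardest point; it is exactly here that the trace-class hypothesis on $V$ is indispensable, a merely compact $V$ being insufficient to force existence of the wave operators. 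The case $t\to-\infty$ is handled identically with the intervals $[t_1,t_2]$ receding to $-\infty$.
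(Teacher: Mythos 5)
First, a point of order: the paper does not prove Theorem~\ref{WO2} at all --- it is quoted as Pearson's theorem, with \cite{RS4} and \cite{Yafaev} indicated for the general theory --- so your attempt can only be compared with the standard proofs in those references. Measured against them, your opening moves are fine (the Cook-type identity, the dense set of vectors whose spectral density is bounded with compact support, the Schmidt decomposition of $V=BJ-JA$), but what follows is not a proof: at the decisive step you only describe what you are ``aiming to show'', and that step would in fact fail as formulated. After the Schmidt decomposition you want $\|Y_k(t_1,t_2)\|\to 0$ for each fixed $k$. Against a point mass of $d\mu_{\phi_k}$ at an eigenvalue $\mu_0$ of $B$, this requires the existence of $\lim_{t\to\infty}\int_0^t c_k(s)e^{is\mu_0}\,ds$, i.e.\ of the boundary value of the Cauchy transform of $m_k\in L^2$ at the \emph{specific} point $\mu_0$. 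A Sokhotski--Plemelj/limiting-absorption argument produces such boundary values for Lebesgue-a.e.\ $\mu_0$ only, and the singular part of $d\mu_{\phi_k}$ is carried precisely by a Lebesgue-null set, so ``a.e.'' yields nothing; for a general $L^2$ function $m_k$ the relevant principal value can diverge at a prescribed point (take $m_k(\lambda)=\sign(\lambda-\mu_0)\,\bigl|\ln|\lambda-\mu_0|\bigr|^{-1}$ near $\mu_0$). What makes the theorem true is a cancellation encoded in the \emph{global} structure $V=BJ-JA$: in the toy model $\mathcal G=\mathcal H\oplus\mathbb C$, $B=A\oplus\mu_0$, $Jf=f\oplus(f,\psi)$, one computes $Vf=0\oplus\bigl(f,(\mu_0-A)\psi\bigr)$, and the dangerous factor $(\mu_0-\lambda)^{-1}$ coming from the eigenvalue is cancelled identically by the factor $(\mu_0-\lambda)$ built into $V$, leaving a Riemann--Lebesgue integral. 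The individual Schmidt pieces $s_k(\cdot,\psi_k)\phi_k$ are \emph{not} of the form $BJ_k-J_kA$; the decomposition destroys exactly this structure, and there is no reason for the separate terms $Y_k$ to converge --- only their sum must.

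Two further gaps occur even where you regard matters as settled. On the absolutely continuous part of $d\mu_{\phi_k}$, Plancherel controls the contribution only if the density $w_k$ of $d\mu^{\rm(ac)}_{\phi_k}$ is bounded; in general $w_k$ is merely in $L^1$, so an additional approximation of $\phi_k$ by vectors with bounded density is needed, and its error must again be controlled uniformly in $k$. And the uniform-in-$(t_1,t_2)$ bound that would justify interchanging the limit with $\sum_k s_k$ is never produced; the trivial bound $\|Y_k(t_1,t_2)\|\le (t_2-t_1)^{1/2}\|c_k\|_{L^2}$ is useless for this. The actual proofs avoid any pointwise analysis of the spectral measure of $B$ altogether: one factorizes $V=K_2^*K_1$ with $K_1,K_2$ Hilbert--Schmidt, uses Rosenblum's lemma $\int_{\mathbb R}\|K_1e^{-iAt}f\|^2\,dt\le 2\pi M\|K_1\|_{\mathfrak S_2}^2$ on the $A$-side (the only side where absolute continuity is available), and controls the $B$-side factor $\int_{t_1}^{t_2}\|K_2e^{-iBs}\Gamma\|^2\,ds$, with $\Gamma=\int_{t_1}^{t_2}e^{iBs}Ve^{-iAs}f\,ds$, by Pearson's bootstrap estimate applied to $\Gamma$ itself; see the proof of Pearson's theorem in \cite{RS4} or \cite{Yafaev}. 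So the correct diagnosis is: your reduction and choice of dense set are sound, but the heart of the proof --- exactly the step you yourself flag as hardest --- is missing, and the route you sketch for it cannot be completed for a structural reason, not merely a technical one.
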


\subsection{Duplex Toeplitz operators} 
%\label{duplex:subsect}

Let us return to Toeplitz operators.
%We always assume that $\omega\in L^\infty (\mathbb T)$.
% and $\omega = \overline{\omega}$.  
Here we compare the Toeplitz operator $T = T(\omega)$ defined by \e{eq:TT}
with the multiplication operator $\boldsymbol\Omega$ by 
 a bounded  function $\omega$,
\begin{align*}
(\boldsymbol\Omega f)(\z) = \omega(\z) f(\z) ,\q \z\in \mathbb T, 
\end{align*}
acting on the space $\mathcal H = L^2(\mathbb T)$. In this subsection $\omega$ may be complex-valued.
%We always assume that $\omega=\bar{\omega} $. 
Recall that ${\mathbb P}_{\pm}$ 
are the orthogonal projections in $L^2(\mathbb T)$ onto the Hardy spaces 
${\mathbb H}_{\pm}^2 (\mathbb T)$. 

It is convenient to introduce on $L^2 (\mathbb T)$ the 
%auxiliary 
\textit{duplex} Toeplitz 
operator 
\begin{equation}
{\bf T } = \mathbf T(\omega) 
= {\mathbb P}_+ \boldsymbol\Omega {\mathbb P}_+ 
+ {\mathbb P}_- \boldsymbol\Omega {\mathbb P}_-
\label{eq:T} \end{equation}
and the \textit{symmetrized}  
Hankel operator 
\begin{equation}
{\bf H } ={\bf H } (\omega) = {\mathbb P}_+ \boldsymbol\Omega {\mathbb P}_- 
+ {\mathbb P}_- \boldsymbol\Omega {\mathbb P}_+
\label{eq:H} \end{equation}
(these operators are symmetric if $\omega = \overline{\omega}$).
%on the space $L^2 (\mathbb T)$.
Then
\begin{equation}
\boldsymbol\Omega = {\bf T }+  {\bf H }.
\label{eq:HtH} \end{equation}
Under very general assumptions on its symbol $\omega$ the
  operator $\mathbf H$ is trace class, 
%on the right-hand side is negligible 
so that
%, to some extent, the operator 
${\bf T }$
can be viewed 
as a perturbation of the multiplication operator $\boldsymbol\Omega$.

Let $T_\pm =T_\pm  (\omega)$ be the restriction of the operator $\mathbf T$  
 onto the subspace ${\mathbb H}_\pm^2 (\mathbb T)$. Clearly, $T = T_{+}$. 
  Define the unitary operator $V$ on $ L^2 (\mathbb T) $ by the formula 
\[
(  Vf)(\z) = \bar{\z}f(\bar{\z}),\q \z\in \mathbb T.
\]
Evidently, $V: {\mathbb H}_\pm^2 (\mathbb T) \to {\mathbb H}_\mp ^2 (\mathbb T)$ and $V {\mathbb P}_{\pm}  = {\mathbb P}_{\mp} V$. 
 The following fact is almost obvious.
 
 \begin{lemma} %\label{TT}
Let $\tilde \omega(\z) = \omega(\overline \z)$.
 Then   
\begin{equation}
V T_-  (\omega)f= T_{+} (\tilde \omega) Vf, \q \forall f\in {\mathbb H}_-^2 (\mathbb T) ,
\label{eq:HtHx} \end{equation}
so that the operators $T_-  (\omega)$ and $ T_{+} (\tilde \omega)$ are unitarily equivalent.
\end{lemma}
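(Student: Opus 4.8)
The plan is to verify the intertwining relation \eqref{eq:HtHx} by a direct computation, reducing everything to the action of the three basic ingredients involved: the reflection $V$, the projections $\mathbb P_\pm$, and the multiplication operator $\boldsymbol\Omega$. The key structural facts we already have are $V\mathbb P_\pm = \mathbb P_\mp V$ and $V:\mathbb H^2_\pm\to\mathbb H^2_\mp$, so $V$ swaps the two Hardy spaces and conjugates the projections. The first step is to understand how $V$ interacts with the multiplication operator $\boldsymbol\Omega(\omega)$. For $f\in L^2(\mathbb T)$ we have $(V\boldsymbol\Omega(\omega)f)(\z)=\bar\z\,\omega(\bar\z)f(\bar\z)$, while $(\boldsymbol\Omega(\tilde\omega)Vf)(\z)=\tilde\omega(\z)\bar\z f(\bar\z)=\omega(\bar\z)\bar\z f(\bar\z)$, where I used the definition $\tilde\omega(\z)=\omega(\bar\z)$. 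These agree, so the conjugation identity
\begin{equation*}
V\boldsymbol\Omega(\omega)=\boldsymbol\Omega(\tilde\omega)V
\end{equation*}
holds on all of $L^2(\mathbb T)$. This is the heart of the matter; everything else is bookkeeping.

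Next I would assemble the pieces. By definition $T_-(\omega)$ is the restriction of $\mathbf T(\omega)=\mathbb P_+\boldsymbol\Omega(\omega)\mathbb P_++\mathbb P_-\boldsymbol\Omega(\omega)\mathbb P_-$ to $\mathbb H^2_-$. For $f\in\mathbb H^2_-$ we have $\mathbb P_+f=0$, so $T_-(\omega)f=\mathbb P_-\boldsymbol\Omega(\omega)\mathbb P_-f=\mathbb P_-\boldsymbol\Omega(\omega)f$. Applying $V$ and using $V\mathbb P_-=\mathbb P_+V$ together with the conjugation identity above gives
\begin{equation*}
VT_-(\omega)f=V\mathbb P_-\boldsymbol\Omega(\omega)f=\mathbb P_+V\boldsymbol\Omega(\omega)f=\mathbb P_+\boldsymbol\Omega(\tilde\omega)Vf.
\end{equation*}
Since $Vf\in\mathbb H^2_+$, we have $\mathbb P_+Vf=Vf$, so $\mathbb P_+\boldsymbol\Omega(\tilde\omega)Vf=\mathbb P_+\boldsymbol\Omega(\tilde\omega)\mathbb P_+Vf=T_+(\tilde\omega)Vf$, where the last equality is just the definition of $T_+(\tilde\omega)=T(\tilde\omega)$ restricted to $\mathbb H^2_+$. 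This establishes \eqref{eq:HtHx}.

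Finally, the unitary equivalence of $T_-(\omega)$ and $T_+(\tilde\omega)$ follows immediately: $V$ maps $\mathbb H^2_-$ unitarily onto $\mathbb H^2_+$, and the relation $VT_-(\omega)=T_+(\tilde\omega)V$ on $\mathbb H^2_-$ can be rewritten as $T_+(\tilde\omega)=VT_-(\omega)V^{-1}$ on $\mathbb H^2_+$, which is exactly the statement of unitary equivalence. I do not anticipate any genuine obstacle here, as the lemma is, in the authors' own words, ``almost obvious''; the only point requiring the slightest care is tracking the reflection inside the argument of $\omega$ when verifying the conjugation identity $V\boldsymbol\Omega(\omega)=\boldsymbol\Omega(\tilde\omega)V$, making sure the factor $\bar\z$ from the definition of $V$ commutes past the multiplication operator (it does, since multiplication operators commute). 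Everything else is a routine application of the two given commutation rules for $V$.
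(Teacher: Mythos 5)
Your proof is correct and follows essentially the same route as the paper's: both arguments rest on the conjugation identity $V(\omega g)=\tilde\omega\, Vg$ on $L^2(\mathbb T)$ combined with the commutation rule $V\mathbb P_\pm=\mathbb P_\mp V$, differing only in that you restrict to $f\in\mathbb H^2_-$ from the start while the paper inserts $\mathbb P_-f$ for general $f\in L^2(\mathbb T)$. Your added remark that $V$ maps $\mathbb H^2_-$ unitarily onto $\mathbb H^2_+$ (indeed $V^2=I$) correctly completes the unitary equivalence, which the paper leaves implicit.
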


\begin{proof}
Since $V (\omega g)= \tilde \omega Vg$ for all $g\in L^2 (\mathbb T) $, we see that
\[
V {\mathbb P}_{-}(\omega {\mathbb P}_{-} f)=  {\mathbb P}_{+}V (\omega {\mathbb P}_{-} f)=
{\mathbb P}_{+} (\tilde\omega V{\mathbb P}_{-} f)
= {\mathbb P}_{+} (\tilde\omega {\mathbb P}_{+} Vf), \q \forall f\in L^2 (\mathbb T) ,
\]
whence \e{eq:HtHx} follows.
\end{proof}

Thus, for  a real-valued bounded $\omega$, the 
operators  $T_+(\omega)$ and $T_-(\omega) = V^* T_+(\tilde\omega)V$   
are absolutely continuous on $\mathbb H^2_+$ 
and $\mathbb H^2_-$ respectively, 
and $\mathbf T$ is absolutely continuous on $L^2(\mathbb T)$. 

\subsection{Smooth symbols} %\label{smooth:subsect}

Recall one of the equivalent definitions of the Besov 
class ${\mathbb B}^1_{11}={\mathbb B}^1_{11}({\mathbb T})$:  a function $\omega\in {\mathbb B}^1_{11}$ if
\begin{align*}
\int_{\mathbb T} \int_{\mathbb T}  |\eta-1|^{-2}
 |\omega (\z \eta) + \omega (\z \eta^{-1}) - 2 \omega (\z)|d{\mathbf m}(\z) 
d{\mathbf m}(\eta)<\infty.
\end{align*}
A list of basic properties of the Besov spaces can be found 
in the book \cite[Appendix 2.6]{Pe}.
For us,  it is useful to remember that 
\[
\mathbb W^{2,1}(\mathbb T)\subset \mathbb B^1_{11}
\subset \mathbb W^{1,1}(\mathbb T)\subset C(\mathbb T)
\] 
and that  
%
%Note the property 
$\mathbb P_\pm \mathbb B^1_{11}\subset \mathbb B^1_{11}$. 
%It is also useful to remember that 
%$\mathbb W^{2,1}(\mathbb T)\subset \mathbb B^1_{11}
%\subset \mathbb W^{1,1}(\mathbb T)\subset C(\mathbb T)$. 
%A list of basic properties of the Besov spaces can be found 
%in \cite[Appendix 2.6]{Pe}.

We need the   well known criterion of V.~V.~Peller 
 for a Hankel operator to be trace class. 
 The function  $\omega(\z)$ is not required  to be real-valued here.

%\begin{theorem}\label{Pe2D}\cite[Theorem~6.4.4]{Peller}
 
\begin{theorem} \label{PE}\cite[Theorem~6.1.1]{Pe}
The inclusion
$ {\mathbb P}_\pm \boldsymbol\Omega  {\mathbb P}_\mp  \in\mathfrak{S}_{1} $
 holds
 if and only if $ {\mathbb P}_\pm \omega \in {\mathbb B}^1_{11}$.
\end{theorem}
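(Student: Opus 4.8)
The plan is to reduce the statement to a question about analytic symbols and then treat the two implications separately by means of a Littlewood--Paley (dyadic) decomposition. First I would note that, relative to the standard Fourier bases of $\mathbb H^2_\mp$, the matrix of $\mathbb P_+\boldsymbol\Omega\mathbb P_-$ involves only the coefficients $\widehat\omega(k)$ with $k\ge1$; hence this operator depends solely on $\psi:=\mathbb P_+\omega$ and is the Hankel operator $H_\psi$ with matrix $a_{jk}=\widehat\psi(j+k+1)$, $j,k\ge0$ (the operator $\mathbb P_-\boldsymbol\Omega\mathbb P_+$ being treated symmetrically, and the case $\mathbb P_-$ in the statement following by passing to $\tilde\omega$). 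Thus it suffices to show that $H_\psi\in\mathfrak S_{1}$ if and only if $\psi\in\mathbb B^1_{11}$. Writing $\psi=\sum_{n\ge0}\Delta_n\psi$, where $\Delta_n\psi$ is an analytic polynomial with spectrum in a dyadic band $[2^{n-1},2^{n+1}]$, I would work throughout with the equivalent norm $\|\psi\|_{\mathbb B^1_{11}}\asymp\sum_{n\ge0}2^n\|\Delta_n\psi\|_{L^1(\mathbb T)}$.

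For the sufficiency ($\psi\in\mathbb B^1_{11}\Rightarrow H_\psi\in\mathfrak S_{1}$) the crux is a block estimate $\|H_{\Delta_n\psi}\|_{\mathfrak S_{1}}\le C\,2^n\|\Delta_n\psi\|_{L^1}$; summing over $n$ and invoking the triangle inequality in $\mathfrak S_{1}$ then gives $\|H_\psi\|_{\mathfrak S_{1}}\le C\|\psi\|_{\mathbb B^1_{11}}$. Since the symbol $\Delta_n\psi$ is supported in a band of width $\sim 2^n$, the operator $H_{\Delta_n\psi}$ has rank $\lesssim 2^n$, and a crude bound $\|H_{\Delta_n\psi}\|_{\mathfrak S_{1}}\le(\mathrm{rank})^{1/2}\|H_{\Delta_n\psi}\|_{\mathfrak S_{2}}\lesssim 2^n\|\Delta_n\psi\|_{L^2}$ follows from Cauchy--Schwarz applied to the singular values together with the Hilbert--Schmidt identity $\|H_{\Delta_n\psi}\|_{\mathfrak S_{2}}^2=\sum_{m\ge1}m\,|\widehat{\Delta_n\psi}(m)|^2\sim 2^n\|\Delta_n\psi\|_{L^2}^2$. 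This $L^2$ version is, however, not summable against the $\mathbb B^1_{11}$ norm, so I would instead establish the sharp $L^1$ version, factoring the finite-rank operator through the band projection and exploiting the convolution structure of the Hankel matrix to represent $H_{\Delta_n\psi}$ as a weighted average of rank-one operators indexed by the pointwise values of $\Delta_n\psi$.

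For the necessity ($H_\psi\in\mathfrak S_{1}\Rightarrow\psi\in\mathbb B^1_{11}$) I would run the dyadic decomposition in reverse. The block $\Delta_n\psi$ can be reconstructed from the compression $P_nH_\psi Q_n$, where $P_n,Q_n$ are coordinate projections onto windows of size $\sim 2^n$ chosen so that the relevant anti-diagonals carry exactly the coefficients $\widehat\psi(m)$ with $m\sim 2^n$. I would then prove the reverse block estimate $2^n\|\Delta_n\psi\|_{L^1}\le C\,\|P_nH_\psi Q_n\|_{\mathfrak S_{1}}$, which is the lower counterpart of the bound used for sufficiency; because consecutive windows overlap boundedly, the sub-block trace norms sum to at most $C\|H_\psi\|_{\mathfrak S_{1}}$, yielding $\|\psi\|_{\mathbb B^1_{11}}\le C\|H_\psi\|_{\mathfrak S_{1}}$. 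Equivalently, one may argue by duality: testing $H_\psi$ against bounded Hankel operators $H_b$, whose symbols by Nehari's theorem fill the unit ball of analytic $\BMO$, via the exact trace pairing $\Tr(H_\psi H_b^*)=\sum_{m\ge1}m\,\widehat\psi(m)\overline{\widehat b(m)}$, and using that $\mathbb B^1_{11}$ is predual to the corresponding analytic class under this weighted pairing.

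The main obstacle is precisely the sharp block estimate with the $L^1$ norm on the right-hand side: the naive rank-times-norm and Cauchy--Schwarz arguments produce an $L^2$ (or $L^\infty$) norm and lose the correct scaling in $2^n$, so recovering the $L^1$ norm genuinely requires the frequency-localized factorization described above. On the necessity side the comparable difficulty is to establish the reverse inequality $2^n\|\Delta_n\psi\|_{L^1}\lesssim\|P_nH_\psi Q_n\|_{\mathfrak S_{1}}$ (equivalently, to make the duality between $\mathbb B^1_{11}$ and the analytic $\BMO$/Zygmund scale precise and to verify that the abstract trace pairing is, up to absolute constants, the concrete bilinear form on Fourier coefficients, matching lower bounds included). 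Once these two quantitative ingredients are in place, the two implications combine to give the stated equivalence.
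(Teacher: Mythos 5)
The first thing to note is that the paper contains no proof to compare yours against: Theorem \ref{PE} is Peller's trace-class criterion for Hankel operators, imported verbatim from \cite[Theorem~6.1.1]{Pe}, so what you have written is a blind reconstruction of Peller's own argument. Your reduction and your sufficiency half are essentially correct. The matrix identification $a_{jk}=\widehat\psi(j+k+1)$ with $\psi=\mathbb P_+\omega$ is right (the constant $\widehat\omega(0)$ drops out of the operator and is harmless on the Besov side), the dyadic norm $\sum_n 2^n\|\Delta_n\psi\|_{L^1}$ is the standard equivalent norm, and the trace pairing $\Tr(\Gamma_\psi\Gamma_b^*)=\sum_{m\ge1}m\,\widehat\psi(m)\overline{\widehat b(m)}$ is exactly correct. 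Moreover, the ``weighted average of rank-one operators'' you gesture at genuinely closes the sharp block estimate, and more cheaply than you suggest: for an analytic polynomial $p$ with $\widehat p(0)=0$ of degree $m$, setting $x_\zeta=(1,\bar\zeta,\dots,\bar\zeta^{\,m-1})^{T}$ one has the identity
\begin{equation*}
\Gamma_p=\int_{\mathbb T}p(\zeta)\,\bar\zeta\,x_\zeta x_\zeta^{T}\,d{\mathbf m}(\zeta),
\qquad\text{whence}\qquad
\|\Gamma_p\|_{\mathfrak S_1}\le\int_{\mathbb T}|p(\zeta)|\,\|x_\zeta\|^2\,d{\mathbf m}(\zeta)=m\,\|p\|_{L^1},
\end{equation*}
and summing over dyadic blocks yields $\|\Gamma_\psi\|_{\mathfrak S_1}\le C\|\psi\|_{{\mathbb B}^1_{11}}$. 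So on the sufficiency side the obstacle you flag is already resolved by the device you name.

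The genuine gap is in the necessity half, and in particular your ``equivalently, one may argue by duality'' is not equivalent and fails as stated. Under the weighted pairing, the dual of ${\mathbb B}^1_{11}$ is the Bloch-type space ${\mathbb B}^0_{\infty\infty}$ (finite $\sup_n\|\Delta_n b\|_{L^\infty}$), which \emph{strictly} contains analytic $\mathrm{BMO}$: the lacunary series $\sum_n z^{2^n}$ lies in ${\mathbb B}^0_{\infty\infty}$ but not in $\mathrm{BMOA}$. Hence the Nehari unit ball does not norm ${\mathbb B}^1_{11}$ under this pairing; testing $\Gamma_\psi$ against bounded \emph{Hankel} operators only yields the strictly weaker conclusion $\psi'\in H^1$ (note ${\mathbb B}^0_{11}\subsetneq H^1$, as the lacunary example with coefficients $1/n$ shows). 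The structural reason is that the projection onto Hankel matrices, i.e.\ averaging along antidiagonals, is unbounded in operator norm, so the $\mathfrak S_1$--$\mathfrak S_\infty$ duality cannot be restricted to Hankel test operators. Your first (compression) formulation is the correct mechanism: one tests against \emph{non-Hankel} block-diagonal operators $\sum_n P_n\Gamma_{b_n}Q_n$ with $\|b_n\|_{L^\infty}\le1$ and windows split into finitely many mutually orthogonal families, so that the test operator has norm $\le C$. But the decisive reverse estimate $2^n\|\Delta_n\psi\|_{L^1}\le C\|P_n\Gamma_\psi Q_n\|_{\mathfrak S_1}$ --- which requires reconstructing the smooth block from a finite window and absorbing the non-constant weight $m\sim2^n$ in the pairing by a uniformly bounded smooth multiplier --- is precisely the content of Peller's necessity proof, and in your text it is announced as an obstacle rather than established. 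In sum: right architecture throughout, sufficiency complete in outline once the averaging identity above is written down, necessity an unfilled gap whose duality shortcut, as stated, is actually false.
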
 

 \begin{corollary} %\label{PE1}
The  symmetrized Hankel operator  \eqref{eq:H} satisfies 
\begin{equation}
   {\mathbb P}_- \boldsymbol\Omega  {\mathbb P}_+ + {\mathbb P}_ 
   + \boldsymbol\Omega {\mathbb P}_- \in\mathfrak{S}_{1}
\label{eq:Hs1}
\end{equation}
 if and only if $\omega \in {\mathbb B}^1_{11}$.
\end{corollary}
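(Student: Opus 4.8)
The plan is to deduce the claim directly from Peller's criterion (Theorem~\ref{PE}) applied separately to the two summands appearing in \eqref{eq:Hs1}, together with the mapping property $\mathbb{P}_\pm\mathbb{B}^1_{11}\subset\mathbb{B}^1_{11}$ recorded above. Thus the whole argument reduces to bookkeeping of the two Peller conditions for the ``$+$'' and ``$-$'' blocks.

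For the sufficiency I would argue as follows. Suppose $\omega\in\mathbb{B}^1_{11}$. Since $\mathbb{P}_\pm\mathbb{B}^1_{11}\subset\mathbb{B}^1_{11}$, both $\mathbb{P}_+\omega$ and $\mathbb{P}_-\omega$ lie in $\mathbb{B}^1_{11}$, so Theorem~\ref{PE} applied in both forms gives $\mathbb{P}_+\boldsymbol\Omega\mathbb{P}_-\in\mathfrak{S}_1$ and $\mathbb{P}_-\boldsymbol\Omega\mathbb{P}_+\in\mathfrak{S}_1$. As $\mathfrak{S}_1$ is a linear space, their sum, which is exactly the operator in \eqref{eq:Hs1}, is trace class.

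The only delicate point is the necessity, where one must extract the trace-class property of each individual summand from that of their sum; a priori one might fear a cancellation. This is resolved by the orthogonality built into $\mathbf{H}$: in the decomposition $L^2(\mathbb{T})=\mathbb{H}^2_+\oplus\mathbb{H}^2_-$ the operator $\mathbf{H}$ is purely off-diagonal, and compressing it by the projections recovers the two blocks,
\begin{equation*}
\mathbb{P}_+\mathbf{H}\,\mathbb{P}_- = \mathbb{P}_+\boldsymbol\Omega\mathbb{P}_-,\qquad
\mathbb{P}_-\mathbf{H}\,\mathbb{P}_+ = \mathbb{P}_-\boldsymbol\Omega\mathbb{P}_+,
\end{equation*}
where I have used $\mathbb{P}_+\mathbb{P}_-=0$ and $\mathbb{P}_\pm^2=\mathbb{P}_\pm$. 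Since $\mathfrak{S}_1$ is a two-sided ideal, multiplication on the left and right by the bounded projections $\mathbb{P}_\pm$ preserves $\mathfrak{S}_1$; hence if $\mathbf{H}\in\mathfrak{S}_1$ then both $\mathbb{P}_+\boldsymbol\Omega\mathbb{P}_-$ and $\mathbb{P}_-\boldsymbol\Omega\mathbb{P}_+$ belong to $\mathfrak{S}_1$.

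To finish I would invoke Theorem~\ref{PE} once more in both forms, obtaining $\mathbb{P}_+\omega\in\mathbb{B}^1_{11}$ and $\mathbb{P}_-\omega\in\mathbb{B}^1_{11}$, and conclude $\omega=\mathbb{P}_+\omega+\mathbb{P}_-\omega\in\mathbb{B}^1_{11}$. I do not expect any substantial obstacle: the argument is entirely formal once the block structure of $\mathbf{H}$ and the ideal property of $\mathfrak{S}_1$ are used to split the sum in the necessity direction.
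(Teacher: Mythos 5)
Your proposal is correct and follows essentially the same route as the paper: sufficiency via Peller's criterion (Theorem~\ref{PE}) applied to $\mathbb P_\pm\omega$ together with $\mathbb P_\pm\mathbb B^1_{11}\subset\mathbb B^1_{11}$, and necessity by splitting the sum back into its two blocks and invoking Theorem~\ref{PE} again. The only difference is cosmetic: where the paper simply asserts that \eqref{eq:Hs1} implies both inclusions \eqref{eq:Hs2}, you make this explicit via the compressions $\mathbb P_\pm\mathbf H\,\mathbb P_\mp$ and the ideal property of $\mathfrak S_1$, which is exactly the justification the paper leaves tacit.
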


 \begin{proof}
 If  $\omega \in {\mathbb B}^1_{11}$, then ${\mathbb P}_{-} \omega \in {\mathbb B}^1_{11}$ and ${\mathbb P}_{+}  \omega  \in {\mathbb B}^1_{11}$ so that 
  \begin{equation}
{\mathbb P}_- \boldsymbol\Omega {\mathbb P}_+ \in\mathfrak{S}_{1}
\q {\rm and}\q {\mathbb P}_+ \boldsymbol\Omega  {\mathbb P}_- \in\mathfrak{S}_{1}
\label{eq:Hs2}\end{equation}
 by Theorem~\ref{PE}. Conversely, inclusion \e{eq:Hs1} implies both inclusions 
 \e{eq:Hs2}, and hence ${\mathbb P}_{-}\omega \in {\mathbb B}^1_{11}$ 
 and ${\mathbb P}_{+} \omega \in {\mathbb B}^1_{11}$ again by 
 Theorem~\ref{PE}.
  \end{proof}
  
Putting together Theorems~\ref{KR}   and \ref{PE}, we obtain the following 
result for the  self-adjoint duplex Toeplitz operator $\mathbf T(\omega)$. Recall that this operator is absolutely continuous.
  
\begin{theorem} \label{TrCl} 
Suppose that $\omega =\bar{\omega}\in {\mathbb B}^1_{11}$. 
Then the wave operators $W_{\pm} ({\bf T }(\omega), \boldsymbol\Omega)$ exist, are isometric on the absolutely continuous subspace of $\boldsymbol\Omega$ and are complete, that is, 
 \begin{equation*}
\Ran W_\pm (\mathbf T(\omega), \boldsymbol\Omega) = L^2(\mathbb T).
%\label{eq:Hs3}
\end{equation*}
\end{theorem}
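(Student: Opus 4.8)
The plan is to combine the Kato--Rosenblum theorem (Theorem~\ref{KR}) with Peller's trace class criterion (Theorem~\ref{PE}), using the decomposition $\boldsymbol\Omega = \mathbf T + \mathbf H$ recorded in \eqref{eq:HtH}. The essential observation is that the difference between the two operators we wish to compare is precisely the symmetrized Hankel operator:
\begin{equation*}
\boldsymbol\Omega - \mathbf T(\omega) = \mathbf H(\omega)
= {\mathbb P}_+ \boldsymbol\Omega {\mathbb P}_- + {\mathbb P}_- \boldsymbol\Omega {\mathbb P}_+ .
\end{equation*}
Since we work on a single Hilbert space $\mathcal H = \mathcal G = L^2(\mathbb T)$ with the identity identification $J = I$, the hypotheses of Theorem~\ref{KR} will be met as soon as we verify that $\mathbf H(\omega) \in \mathfrak S_1$.

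First I would invoke the Corollary to Theorem~\ref{PE}: under the standing assumption $\omega = \bar\omega \in {\mathbb B}^1_{11}$, the symmetrized Hankel operator satisfies the trace class inclusion \eqref{eq:Hs1}, which is exactly $\mathbf H(\omega) \in \mathfrak S_1$. (Equivalently, one applies Theorem~\ref{PE} directly to each of the two summands ${\mathbb P}_\pm \boldsymbol\Omega {\mathbb P}_\mp$, using that $\omega \in {\mathbb B}^1_{11}$ forces both ${\mathbb P}_+\omega$ and ${\mathbb P}_-\omega$ to lie in ${\mathbb B}^1_{11}$.) Both $\boldsymbol\Omega$ and $\mathbf T(\omega)$ are self-adjoint because $\omega$ is real-valued, so all operators in sight are genuine self-adjoint operators to which the trace class theory applies.

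With $\mathbf T(\omega) - \boldsymbol\Omega = -\mathbf H(\omega) \in \mathfrak S_1$, the Kato--Rosenblum theorem applies to the pair $(\mathbf T(\omega), \boldsymbol\Omega)$ and immediately yields the existence of the wave operators $W_\pm(\mathbf T(\omega), \boldsymbol\Omega)$, together with their isometry on ${\mathcal H}^{\rm(ac)}_{\boldsymbol\Omega}$ and their completeness in the sense $\Ran W_\pm(\mathbf T(\omega), \boldsymbol\Omega) = {\mathcal H}^{\rm(ac)}_{\mathbf T(\omega)}$. To finish I would only need to identify the right-hand side of the completeness relation with the full space: the discussion following Lemma~\ref{eq:HtHx} established that $\mathbf T(\omega)$ is absolutely continuous on all of $L^2(\mathbb T)$ (being the orthogonal sum of the absolutely continuous operators $T_+(\omega)$ and $T_-(\omega) = V^* T_+(\tilde\omega) V$), so ${\mathcal H}^{\rm(ac)}_{\mathbf T(\omega)} = L^2(\mathbb T)$, giving the stated conclusion $\Ran W_\pm(\mathbf T(\omega), \boldsymbol\Omega) = L^2(\mathbb T)$.

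There is essentially no serious obstacle here, since the hard analytic content has been externalized into Theorem~\ref{PE} and its Corollary; the only point requiring a moment's care is the bookkeeping that the difference operator is the \emph{symmetrized} Hankel operator $\mathbf H$ rather than an ordinary one-sided Hankel operator, which is why one needs the full strength of the Corollary (both $\mathbb P_\pm \omega \in \mathbb B^1_{11}$) rather than a single application of Peller's criterion. Everything else is a direct citation.
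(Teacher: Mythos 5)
Your proposal is correct and takes essentially the same route as the paper: the paper's proof likewise observes that $\boldsymbol\Omega-\mathbf T(\omega)=\mathbf H(\omega)\in\mathfrak S_1$ via the Corollary to Peller's Theorem~\ref{PE}, applies the Kato--Rosenblum Theorem~\ref{KR} to the pair $(\mathbf T(\omega),\boldsymbol\Omega)$, and uses the previously established absolute continuity of $\mathbf T(\omega)$ (from \eqref{eq:HtHx} and Rosenblum's theorem) to identify $\mathcal H^{\rm(ac)}_{\mathbf T(\omega)}=L^2(\mathbb T)$. One cosmetic remark: the unitary-equivalence lemma you invoke is unnumbered in the paper, so it should be cited via its equation \eqref{eq:HtHx} rather than as a numbered lemma.
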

 
In view of the general property \eqref{intertwine:eq}, we also have

  \begin{corollary} %\label{TrUE} 
  The intertwining property
 \begin{equation*}
{\bf T }(\omega)W_{\pm} ({\bf T }(\omega), \boldsymbol\Omega)= W_{\pm} ({\bf T }(\omega), \boldsymbol\Omega)\,\boldsymbol\Omega
%\label{eq:intP}
\end{equation*}
holds, and the operator ${\bf T }(\omega)$ is unitarily 
  equivalent to the absolutely continuous part ${\boldsymbol\Omega}^{({\rm ac})}$
   of the operator $\boldsymbol\Omega$.
\end{corollary}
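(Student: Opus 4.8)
The plan is to obtain the Corollary as an immediate consequence of Theorem~\ref{TrCl} together with the general properties (b) and (e) of wave operators recorded in Subsection~\ref{ST:subsect}, applied with $A=\boldsymbol\Omega$, $B={\bf T}(\omega)$ and $J=I$. No new estimates are needed: the entire content is bookkeeping built on the already established existence, isometry and completeness of the wave operators $W_\pm({\bf T}(\omega),\boldsymbol\Omega)$, abbreviated below as $W_\pm$.

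First I would deduce the operator form of the intertwining relation. Property (b) gives $E_{{\bf T}}(X)\,W_\pm=W_\pm\,E_{\boldsymbol\Omega}(X)$ for every Borel set $X\subset{\mathbb R}$, and by the spectral theorem this extends to $g({\bf T})\,W_\pm=W_\pm\,g(\boldsymbol\Omega)$ for all bounded Borel functions $g$. Since $\boldsymbol\Omega$ and ${\bf T}$ are bounded, their spectra are compact, so the function $g(\lambda)=\lambda$ is admissible and yields ${\bf T}(\omega)\,W_\pm=W_\pm\,\boldsymbol\Omega$. Here I would note that the projection $P^{\rm(ac)}_{\boldsymbol\Omega}$ built into $W_\pm$ causes no difficulty: because $\boldsymbol\Omega$ commutes with $P^{\rm(ac)}_{\boldsymbol\Omega}$, one has $W_\pm\,\boldsymbol\Omega=W_\pm\,\boldsymbol\Omega P^{\rm(ac)}_{\boldsymbol\Omega}$, so the stated identity is precisely the intertwining of ${\bf T}(\omega)$ with the absolutely continuous part of $\boldsymbol\Omega$.

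For the unitary equivalence I would invoke Theorem~\ref{TrCl}, which asserts that $W_\pm$ is isometric on $\mathcal{H}^{\rm(ac)}_{\boldsymbol\Omega}$ and complete, i.e. $\Ran W_\pm=L^2(\mathbb T)$. Since the duplex operator ${\bf T}(\omega)$ is absolutely continuous, as established in the preceding subsection, its absolutely continuous subspace is the whole space, $\mathcal{H}^{\rm(ac)}_{{\bf T}}=L^2(\mathbb T)$. Consequently $W_\pm$ is an isometry of $\mathcal{H}^{\rm(ac)}_{\boldsymbol\Omega}$ onto all of $L^2(\mathbb T)$, hence a unitary operator between these spaces. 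Combining this with the intertwining relation of the previous step puts us exactly in the situation of property (e): the restriction of ${\bf T}(\omega)$ to $\Ran W_\pm=L^2(\mathbb T)$---that is, ${\bf T}(\omega)$ itself---is unitarily equivalent to $\boldsymbol\Omega^{\rm(ac)}$.

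There is no genuine obstacle in this argument; the single point that must be verified with care is that completeness combined with the absolute continuity of ${\bf T}(\omega)$ forces $\Ran W_\pm$ to fill the entire space $L^2(\mathbb T)$. This is what guarantees that the isometry $W_\pm$ is in fact surjective, so that the unitary equivalence is with ${\bf T}(\omega)$ in its entirety rather than merely with its restriction to some proper reducing subspace.
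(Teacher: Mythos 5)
Your proposal is correct and follows essentially the same route as the paper, which derives the corollary directly from Theorem~\ref{TrCl} together with the general properties of wave operators listed in Subsection~\ref{ST:subsect}: the intertwining property \eqref{intertwine:eq} (extended from spectral projections to the operators themselves) and the observation that isometry on $\mathcal{H}^{\rm(ac)}_{\boldsymbol\Omega}$ plus completeness, combined with the absolute continuity of $\mathbf{T}(\omega)$, yield the unitary equivalence of $\mathbf{T}(\omega)$ with $\boldsymbol\Omega^{\rm(ac)}$. Your explicit care about the projection $P^{\rm(ac)}_{\boldsymbol\Omega}$ commuting with $\boldsymbol\Omega$ and about surjectivity of $W_\pm$ is exactly the bookkeeping the paper leaves implicit.
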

 
% In view of definition \e{eq:T}, 
 Theorem~\ref{TrCl} leads to the following result for  
the Toeplitz operators $T_{\pm} (\omega)={\mathbb P}_{\pm}{\bf T }(\omega) {\mathbb P}_{\pm}$ on the subspaces ${\mathbb H}^2_{\pm} ({\mathbb T})$.
 
  \begin{corollary} \label{TrCl1} Let 
% Suppose that 
 $\omega=\bar{\omega} \in {\mathbb B}^1_{11}$.
 Then, for both signs $``\pm"$, the  wave operators
 \begin{equation}
  W_{\pm} (T_+ (\omega), \boldsymbol\Omega;  {\mathbb P}_+)\q {\rm and }\q
 W_{\pm} (T_-(\omega), \boldsymbol\Omega; {\mathbb P}_-)
\label{eq:wo}\end{equation}
  exist
 %,  are complete 
 and
  \begin{equation}
 W_{\pm} ({\bf T }(\omega), \boldsymbol\Omega) 
 = W_{\pm} (T_{+}(\omega), \boldsymbol\Omega; {\mathbb P}_{+})+
 W_{\pm} (T_{-}(\omega), \boldsymbol\Omega; {\mathbb P}_{-}).
\label{eq:WT}
\end{equation}
Furthermore, the relations
\begin{align}\label{WOcompl:eq}
\Ran W_{\pm} (T_{+}(\omega), \boldsymbol\Omega; {\mathbb P}_{+}) = \mathbb H^2_+
\q {\rm and }\q
\Ran W_{\pm} (T_{-}(\omega), \boldsymbol\Omega; {\mathbb P}_{-}) = \mathbb H^2_- 
\end{align}
hold true.
 \end{corollary}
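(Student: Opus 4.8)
The plan is to deduce the corollary from Theorem~\ref{TrCl} for the duplex operator by exploiting the block-diagonal structure of $\mathbf T(\omega)$ with respect to the decomposition $L^2(\mathbb T)=\mathbb H^2_+\oplus\mathbb H^2_-$. From \eqref{eq:T} one reads off $\mathbb P_\pm\mathbf T(\omega)=\mathbf T(\omega)\mathbb P_\pm=\mathbb P_\pm\boldsymbol\Omega\mathbb P_\pm$, so $\mathbf T(\omega)$ commutes with both projections $\mathbb P_\pm$, and its restriction to $\mathbb H^2_\pm$ is exactly $T_\pm(\omega)$. Hence $e^{i\mathbf T t}$ commutes with $\mathbb P_\pm$ and reduces to $e^{iT_\pm t}$ on $\mathbb H^2_\pm$, which yields the key identity
\begin{equation*}
\mathbb P_\pm\, e^{i\mathbf T t} e^{-i\boldsymbol\Omega t}P^{\rm(ac)}_{\boldsymbol\Omega}
= e^{iT_\pm t}\,\mathbb P_\pm e^{-i\boldsymbol\Omega t}P^{\rm(ac)}_{\boldsymbol\Omega},
\end{equation*}
valid because $\mathbb P_\pm e^{-i\boldsymbol\Omega t}P^{\rm(ac)}_{\boldsymbol\Omega}f\in\mathbb H^2_\pm$, where $e^{i\mathbf T t}$ acts as $e^{iT_\pm t}$.

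First I would settle existence. By Theorem~\ref{TrCl} the strong limit $\slim_{t\to\pm\infty}e^{i\mathbf T t}e^{-i\boldsymbol\Omega t}P^{\rm(ac)}_{\boldsymbol\Omega}=W_\pm(\mathbf T(\omega),\boldsymbol\Omega)$ exists; applying the bounded operator $\mathbb P_\pm$ and invoking the displayed identity shows that $e^{iT_\pm t}\mathbb P_\pm e^{-i\boldsymbol\Omega t}P^{\rm(ac)}_{\boldsymbol\Omega}$ converges as $t\to\pm\infty$. This is precisely the existence of the wave operators in \eqref{eq:wo}, and it comes with the representation
\begin{equation*}
W_\pm(T_\pm(\omega),\boldsymbol\Omega;\mathbb P_\pm)=\mathbb P_\pm\,W_\pm(\mathbf T(\omega),\boldsymbol\Omega).
\end{equation*}
(Existence could equally be obtained from Pearson's Theorem~\ref{WO2}, since $T_+(\omega)\mathbb P_+-\mathbb P_+\boldsymbol\Omega=-\mathbb P_+\boldsymbol\Omega\mathbb P_-\in\mathfrak S_1$ by Peller's Theorem~\ref{PE} as $\omega\in\mathbb B^1_{11}$, and symmetrically for the minus sign.) Adding the two representations and using $\mathbb P_++\mathbb P_-=I$ gives the additivity \eqref{eq:WT} immediately.

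Finally, the completeness relations \eqref{WOcompl:eq} drop out of the same representation: since $W_\pm(\mathbf T(\omega),\boldsymbol\Omega)$ is complete by Theorem~\ref{TrCl}, i.e.\ $\Ran W_\pm(\mathbf T(\omega),\boldsymbol\Omega)=L^2(\mathbb T)$, we get
\begin{equation*}
\Ran W_\pm(T_\pm(\omega),\boldsymbol\Omega;\mathbb P_\pm)
=\mathbb P_\pm\big(\Ran W_\pm(\mathbf T(\omega),\boldsymbol\Omega)\big)
=\mathbb P_\pm L^2(\mathbb T)=\mathbb H^2_\pm.
\end{equation*}
The only delicate point---and the step I expect to require the most care---is the bookkeeping that justifies pulling $\mathbb P_\pm$ through the strong limit and identifying $e^{i\mathbf T t}$ with $e^{iT_\pm t}$ on $\mathbb H^2_\pm$; once the block-diagonal structure of $\mathbf T(\omega)$ is exploited cleanly, everything else is a one-line consequence of Theorem~\ref{TrCl}.
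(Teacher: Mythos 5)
Your proposal is correct and takes essentially the same route as the paper's own proof: the paper likewise deduces everything from Theorem~\ref{TrCl} via the identities $e^{i\mathbf T t}\,\mathbb P_\pm = e^{iT_\pm t}\,\mathbb P_\pm$ and $\mathbb P_\pm W_\pm(\mathbf T(\omega),\boldsymbol\Omega)=W_\pm(T_\pm(\omega),\boldsymbol\Omega;\mathbb P_\pm)$, obtaining \eqref{eq:WT} from $\mathbb P_++\mathbb P_-=I$ and \eqref{WOcompl:eq} from the completeness of $W_\pm(\mathbf T(\omega),\boldsymbol\Omega)$. You merely spell out the bookkeeping (pulling $\mathbb P_\pm$ through the strong limit, $\mathbb P_\pm L^2(\mathbb T)=\mathbb H^2_\pm$) that the paper leaves implicit, and your parenthetical alternative via Pearson's theorem is a valid but inessential aside.
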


\begin{proof} 
In view of definition \eqref{eq:T}, 
the existence of wave operators on the right-hand side of \eqref{eq:WT} and the formula 
\eqref{eq:WT} itself follow from Theorem \ref{TrCl} due to the identities $e^{i{\bf T}t}{\mathbb P}_{\pm}= e^{i T_{\pm}t}{\mathbb P}_{\pm}$ and
\[
\mathbb P_+ W_\pm (\mathbf T, \boldsymbol\Omega) = W_{\pm} (T_{+}(\omega), \boldsymbol\Omega; {\mathbb P}_{+}), \q
\mathbb P_- W_\pm (\mathbf T, \boldsymbol\Omega) = W_{\pm} (T_{-}(\omega), \boldsymbol\Omega; {\mathbb P}_{-}).
\]
These relations, together with the completeness of $W_\pm (\mathbf T, \boldsymbol\Omega)$,  imply \e{WOcompl:eq}.
   \end{proof}

  According to  \eqref{intertwine:eq}, 
all wave operators constructed above possess
  the intertwining property, for example,
\begin{equation*}
 T_{+}(\omega)W_{\pm} (T_{+}(\omega), \boldsymbol\Omega ; \mathbb P_+) 
 = W_{\pm} (T_{+}(\omega), \boldsymbol\Omega; \mathbb P_+)\,\boldsymbol\Omega.
\end{equation*}
The isometry of the wave operators \eqref{eq:wo} 
is discussed in Section~3.6 in a  more general setting.
     
\subsection{Local smoothness.} 

In the following  assertion we do not assume that the symbol $\omega$ is smooth on the whole unit circle ${\mathbb T}$.
We first consider the duplex Toeplitz operator ${\bf T }$ defined by \e{eq:T}.

% Suppose now that  $\omega \in {\mathbb B}^1_{11}(\Delta)$ 
% only for some open set $\Delta\subset{\mathbb T}$. 
% By definition, this means that 
% $\omega \varphi\in {\mathbb B}^1_{11}(\mathbb T)$ for all $\varphi\in C_{0}^\infty (\Delta)$.

\begin{theorem} \label{Trloc} 
 Let $\omega=\bar{\omega} \in L^\infty ({\mathbb T}) $ and $\varphi\omega\in {\mathbb B}^1_{11}(\mathbb T)$ for some function  $\varphi\in C ^\infty ({\mathbb T})$. Denote by $J_{\varphi}$ 
  the operator of multiplication by $\varphi$ in the space $L^2(\mathbb T)$. 
 Then the wave operators $W_{\pm} ({\bf T }(\omega), \boldsymbol\Omega; J_{\varphi})$ exist.
\end{theorem}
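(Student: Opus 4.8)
The plan is to verify the hypothesis of Pearson's theorem (Theorem~\ref{WO2}) with $B=\mathbf T(\omega)$, $A=\boldsymbol\Omega$ and $J=J_\varphi$; that is, to show that
\[
\mathbf T(\omega)J_\varphi - J_\varphi\boldsymbol\Omega \in \mathfrak S_{1}.
\]
The first step is to use the splitting \eqref{eq:HtH}, namely $\boldsymbol\Omega=\mathbf T(\omega)+\mathbf H(\omega)$, together with the fact that the two multiplication operators $\boldsymbol\Omega$ and $J_\varphi$ commute, $\boldsymbol\Omega J_\varphi=J_\varphi\boldsymbol\Omega$. This yields the identity
\[
\mathbf T(\omega)J_\varphi - J_\varphi\boldsymbol\Omega = -\,\mathbf H(\omega)J_\varphi,
\]
so the whole problem reduces to proving that $\mathbf H(\omega)J_\varphi\in\mathfrak S_{1}$. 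Observe that $\mathbf H(\omega)$ by itself need \emph{not} be trace class, since $\omega$ is merely bounded; it is the smooth cut-off $\varphi$ that is responsible for the gain of regularity.

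Next, recalling the definition \eqref{eq:H} of $\mathbf H$, I would write $\mathbf H(\omega)J_\varphi = \mathbb P_+\boldsymbol\Omega\mathbb P_-\varphi + \mathbb P_-\boldsymbol\Omega\mathbb P_+\varphi$ and commute $\varphi$ past each inner projection, producing the commutators $[\mathbb P_\mp,\varphi]$. For the resulting main terms one has $\mathbb P_\pm\boldsymbol\Omega\varphi\mathbb P_\mp=\mathbb P_\pm(\varphi\omega)\mathbb P_\mp$, which are Hankel operators with symbol $\varphi\omega$; since $\varphi\omega\in\mathbb B^1_{11}$ and $\mathbb P_\pm\mathbb B^1_{11}\subset\mathbb B^1_{11}$, Theorem~\ref{PE} places them in $\mathfrak S_{1}$. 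For the remainder terms $\mathbb P_\pm\boldsymbol\Omega\,[\mathbb P_\mp,\varphi]$ I would use that, because $\varphi\in C^\infty\subset\mathbb B^1_{11}$, the commutator $[\mathbb P_\mp,\varphi]=\mathbb P_\mp\varphi\mathbb P_\pm-\mathbb P_\pm\varphi\mathbb P_\mp$ is itself trace class by Theorem~\ref{PE}; multiplying it on the left by the bounded operator $\mathbb P_\pm\boldsymbol\Omega$ keeps it in $\mathfrak S_{1}$. Collecting the four contributions gives $\mathbf H(\omega)J_\varphi\in\mathfrak S_{1}$, and the existence of the wave operators follows from Theorem~\ref{WO2}.

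The single point that genuinely requires care, and hence the main obstacle, is the non-commutativity of $J_\varphi$ with the Riesz projections $\mathbb P_\pm$: without it one could simply replace $\omega$ by $\varphi\omega$ and appeal to the trace class bound as in the proof of Theorem~\ref{TrCl}. The argument hinges on the observation that this defect is measured exactly by the commutators $[\mathbb P_\pm,\varphi]$, which are trace class precisely because of the smoothness of $\varphi$. Thus moving $\varphi$ through a projection costs only a trace class error, and the poor (merely $L^\infty$) regularity of $\omega$ is harmless since it only ever appears multiplied by the bounded factor $\mathbb P_\pm\boldsymbol\Omega$.
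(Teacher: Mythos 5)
Your proposal is correct and follows essentially the same route as the paper: both reduce via Pearson's theorem to showing $\mathbf T(\omega)J_\varphi - J_\varphi\boldsymbol\Omega = -\mathbf H(\omega)J_\varphi \in \mathfrak S_1$, then split each term of $\mathbf H(\omega)J_\varphi$ into a Hankel operator with symbol $\varphi\omega$ (trace class by Peller's criterion) plus a bounded operator times the commutator $[\mathbb P_\mp, \varphi]$ (trace class by Peller applied to the smooth $\varphi$). Your decomposition is exactly the paper's identity \eqref{eq:HTk} combined with \eqref{eq:comm}, so there is nothing to add.
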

 
  \begin{proof} 
   According to Theorem~\ref{WO2} it suffices to check  that
    \begin{equation}
 {\bf T }  J_{\varphi} -J_{\varphi} \boldsymbol\Omega \in\mathfrak{S}_{1}, \q {\bf T } ={\bf T }(\omega).
\label{eq:HT}\end{equation}
By \e{eq:HtH}, we have
\[
 {\bf T }  J_{\varphi} -J_{\varphi} \boldsymbol\Omega= (\boldsymbol\Omega - {\bf H } )  J_{\varphi} -J_{\varphi} \boldsymbol\Omega= - {\bf H }   J_{\varphi}.
\]
Using definition \e{eq:H} of the operator $  {\bf H} $, we now see that
\e{eq:HT} 
%Equalities  \e{eq:H} and  \e{eq:HtH} show that \e{eq:HT} 
is equivalent to the inclusion
   \begin{equation}
   {\bf H} J_{\varphi}  =
  {\mathbb P}_{+} \boldsymbol\Omega  {\mathbb P}_{-}J_{\varphi}  
  +   {\mathbb P}_{-}\boldsymbol\Omega  {\mathbb P}_{+}J_{\varphi} \in\mathfrak{S}_{1}.
\label{eq:HT1}\end{equation}
Two terms here are quite similar.
Consider, for example, the first  one:
 \begin{equation}
  {\mathbb P}_{+} \boldsymbol\Omega  {\mathbb P}_{-}J_{\varphi}  
  =   {\mathbb P}_{+} \boldsymbol\Omega J_{\varphi}  {\mathbb P}_{-}  + {\mathbb P}_{+} \boldsymbol\Omega ( {\mathbb P}_{-}J_{\varphi}   -J_{\varphi} {\mathbb P}_{-}).
\label{eq:HTk}\end{equation}
According to Theorem \ref{PE} the operator ${\mathbb P}_{+}( \boldsymbol\Omega J_{\varphi})  {\mathbb P}_{-}  \in\mathfrak{S}_{1}$ because $\omega \varphi\in {\mathbb B}^1_{11}$. Since $  \varphi\in C^\infty$, Theorem \ref{PE} also implies that
\begin{equation}
 {\mathbb P}_{-}J_{\varphi} -  J_{\varphi} {\mathbb P}_{-} 
 =  {\mathbb P}_{-}J_{\varphi}  {\mathbb P}_{+}-   {\mathbb P}_{+}J_{\varphi}  {\mathbb P}_{-}\in\mathfrak{S}_{1}.
\label{eq:comm}
\end{equation}
 It follows that the operator \e{eq:HTk} is trace class which yields inclusions
  \e{eq:HT1} and hence \e{eq:HT}.
 \end{proof}
 
 Similarly to Corollary~\ref{TrCl1}, we  have
    
\begin{corollary} \label{TrCl2} 
Under the assumptions of Theorem~\ref{Trloc}
 the  wave operators 
\begin{equation*}
W_{\pm} (T_+ (\omega), \boldsymbol\Omega; {\mathbb P}_+  J_{\varphi})\q \mbox{and} \q
W_{\pm} (T_- (\omega), \boldsymbol\Omega; {\mathbb P}_-  J_{\varphi})
%\label{eq:wo1}
 \end{equation*}
 exist  for both signs $``\pm"$ and
\begin{equation}
W_{\pm} ({\bf T }(\omega), \boldsymbol\Omega; J_{\varphi}) 
 = W_{\pm} (T_{+}(\omega), \boldsymbol\Omega; {\mathbb P}_{+} J_{\varphi})+
W_{\pm} (T_{-}(\omega), \boldsymbol\Omega; {\mathbb P}_{-}  J_{\varphi}).
 \label{eq:HT2}\end{equation}
 \end{corollary}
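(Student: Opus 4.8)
The final statement is Corollary~\ref{TrCl2}, which extends Corollary~\ref{TrCl1} to the case where the symbol is only locally in the Besov class (multiplied by a cutoff $\varphi$). The structure of the proof should exactly mirror that of Corollary~\ref{TrCl1}, with Theorem~\ref{Trloc} playing the role that Theorem~\ref{TrCl} played before.

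The plan is to start from the conclusion of Theorem~\ref{Trloc}, namely the existence of the wave operators $W_{\pm}(\mathbf T(\omega),\boldsymbol\Omega; J_\varphi)$ for the duplex operator $\mathbf T$. The key observation is the same pair of identities used in Corollary~\ref{TrCl1}: since $\mathbf T = \mathbb P_+\boldsymbol\Omega\mathbb P_+ + \mathbb P_-\boldsymbol\Omega\mathbb P_-$ is block-diagonal with respect to the decomposition $L^2(\mathbb T) = \mathbb H^2_+\oplus \mathbb H^2_-$, the propagator factors as $e^{i\mathbf T t}\mathbb P_\pm = e^{iT_\pm t}\mathbb P_\pm$. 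First I would apply the projections $\mathbb P_\pm$ to the limit defining $W_\pm(\mathbf T(\omega),\boldsymbol\Omega; J_\varphi)$ and pass them through the $t$-dependent factor using this block structure. Because $\mathbb P_\pm$ is a fixed bounded operator, the strong limit survives the composition, so
\[
\mathbb P_\pm W_\pm(\mathbf T(\omega),\boldsymbol\Omega; J_\varphi)
= \slim_{t\to\pm\infty} e^{iT_\pm t}\mathbb P_\pm J_\varphi e^{-i\boldsymbol\Omega t}P^{(\rm ac)}_{\boldsymbol\Omega}
= W_\pm(T_\pm(\omega),\boldsymbol\Omega; \mathbb P_\pm J_\varphi).
\]
This simultaneously establishes the existence of each of the two component wave operators and identifies it as the corresponding projection of the duplex wave operator.

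The decomposition formula \eqref{eq:HT2} then follows immediately by summing these two relations and using $\mathbb P_+ + \mathbb P_- = I$, so that
\[
W_\pm(\mathbf T(\omega),\boldsymbol\Omega; J_\varphi)
= (\mathbb P_+ + \mathbb P_-) W_\pm(\mathbf T(\omega),\boldsymbol\Omega; J_\varphi)
= W_\pm(T_+(\omega),\boldsymbol\Omega; \mathbb P_+ J_\varphi)
+ W_\pm(T_-(\omega),\boldsymbol\Omega; \mathbb P_- J_\varphi).
\]
This is the complete logical skeleton, and it is essentially routine once Theorem~\ref{Trloc} is in hand.

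I do not expect any serious obstacle here, since all the analytic content — the trace-class estimate guaranteeing existence of the duplex wave operator — has already been absorbed into Theorem~\ref{Trloc}. The only point requiring a moment's care is the justification that $\mathbb P_\pm$ commutes past the strong limit: this is legitimate because composing a strongly convergent family with a fixed bounded operator on the left preserves strong convergence, and because the identity $e^{i\mathbf T t}\mathbb P_\pm = e^{iT_\pm t}\mathbb P_\pm$ holds for every $t$. Unlike Corollary~\ref{TrCl1}, no completeness statement analogous to \eqref{WOcompl:eq} is claimed, because in the purely local setting one has no global trace-class perturbation and hence no Kato--Rosenblum completeness to invoke; accordingly the corollary asserts only existence and the decomposition \eqref{eq:HT2}, which is exactly what the above argument delivers.
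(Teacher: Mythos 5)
Your proposal is correct and follows essentially the same route as the paper, which proves Corollary~\ref{TrCl2} ``similarly to Corollary~\ref{TrCl1}'': one projects the duplex wave operator $W_{\pm}(\mathbf T(\omega),\boldsymbol\Omega;J_{\varphi})$ from Theorem~\ref{Trloc} by $\mathbb P_{\pm}$, using the block-diagonal identity $e^{i\mathbf T t}\mathbb P_{\pm}=e^{iT_{\pm}t}\mathbb P_{\pm}$ to obtain existence of the component wave operators, and sums via $\mathbb P_{+}+\mathbb P_{-}=I$ to get \eqref{eq:HT2}. Your remarks on passing $\mathbb P_{\pm}$ through the strong limit and on the absence of a completeness claim in the local setting are both accurate.
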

  
  Let us, finally, replace $\varphi\in C ^\infty ({\mathbb T})$ by the characteristic function of an   arc $\Delta$.
 
\begin{theorem} \label{YX1} 
 Let $\omega=\bar{\omega} \in L^\infty ({\mathbb T}) $, and let $\Delta\subset\mathbb T$ be an open arc. Suppose that $\varphi\omega\in {\mathbb B}^1_{11}(\mathbb T)$ for all functions  $\varphi\in C_{0} ^\infty (\Delta)$.
Then the ``local" wave operators $W_\pm ( T(\omega), \boldsymbol\Omega; \mathbb P\1_{\Delta})$ exist. 
\end{theorem}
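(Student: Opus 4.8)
The plan is to treat the discontinuous identification $\mathbb P\1_{\Delta}$ as a limit of the smooth identifications $\mathbb P_+ J_{\varphi}$ with $\varphi\in C_{0}^\infty(\Delta)$, which are already covered by Corollary~\ref{TrCl2}, and to control the passage to the limit by exploiting that the free evolution $e^{-i\boldsymbol\Omega t}$ is itself a multiplication operator and hence does not transport $L^2$-mass in the variable $\z$. Throughout I recall that $T(\omega)=T_{+}(\omega)$ and $\mathbb P=\mathbb P_{+}$, so the object to be constructed is the strong limit of $e^{iTt}\mathbb P_{+}\1_{\Delta}e^{-i\boldsymbol\Omega t}$ on $\mathcal H^{\rm(ac)}_{\boldsymbol\Omega}$.

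First I would fix $f\in\mathcal H^{\rm(ac)}_{\boldsymbol\Omega}$ and, for a cut-off $\varphi\in C_{0}^\infty(\Delta)$ with $0\le\varphi\le1$, split
\begin{equation*}
e^{iTt}\mathbb P\1_{\Delta}e^{-i\boldsymbol\Omega t}f
=e^{iTt}\mathbb P_{+}J_{\varphi}e^{-i\boldsymbol\Omega t}f
+e^{iTt}\mathbb P(\1_{\Delta}-\varphi)e^{-i\boldsymbol\Omega t}f .
\end{equation*}
By hypothesis $\varphi\omega\in\mathbb B^{1}_{11}$, so Corollary~\ref{TrCl2} (via Theorem~\ref{Trloc}) applies to the first term: its strong limit as $t\to\pm\infty$ exists and equals $W_{\pm}(T_{+}(\omega),\boldsymbol\Omega;\mathbb P_{+}J_{\varphi})f$; in particular this term is Cauchy in $t$.

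The crux is the uniform-in-$t$ smallness of the remainder. Since $\omega$ is real-valued, $e^{-i\boldsymbol\Omega t}$ is multiplication by the unimodular function $e^{-i\omega t}$, and $\mathbb P$ is a contraction, so
\begin{equation*}
\bigl\|e^{iTt}\mathbb P(\1_{\Delta}-\varphi)e^{-i\boldsymbol\Omega t}f\bigr\|
\le\bigl\|(\1_{\Delta}-\varphi)e^{-i\boldsymbol\Omega t}f\bigr\|
=\bigl\|(\1_{\Delta}-\varphi)f\bigr\|,
\end{equation*}
a quantity that does not depend on $t$ at all. Because $\1_{\Delta}-\varphi=(1-\varphi)\1_{\Delta}$ vanishes off $\Delta$ and is bounded by $1$, choosing arcs $\Delta_{n}\uparrow\Delta$ with $\overline{\Delta_{n}}\subset\Delta$ and $\varphi_{n}\in C_{0}^\infty(\Delta)$ equal to $1$ on $\Delta_{n}$, dominated convergence gives $\|(\1_{\Delta}-\varphi_{n})f\|\to0$. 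Hence for every $\eps>0$ I can fix $\varphi\in C_{0}^\infty(\Delta)$ with $\|(\1_{\Delta}-\varphi)f\|<\eps$.

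Finally I would run the standard $\eps/3$ argument: with $\varphi$ fixed as above, the first term is Cauchy, so for $t,s$ of the same (appropriate) sign and large enough its increment has norm $<\eps$, while each remainder has norm $\le\eps$; therefore $\|e^{iTt}\mathbb P\1_{\Delta}e^{-i\boldsymbol\Omega t}f-e^{iTs}\mathbb P\1_{\Delta}e^{-i\boldsymbol\Omega s}f\|\le3\eps$. Thus $e^{iTt}\mathbb P\1_{\Delta}e^{-i\boldsymbol\Omega t}f$ converges as $t\to\pm\infty$ for every $f\in\mathcal H^{\rm(ac)}_{\boldsymbol\Omega}$, and, since the operators are uniformly bounded by $\|\mathbb P\1_{\Delta}\|\le1$, the strong limits $W_{\pm}(T(\omega),\boldsymbol\Omega;\mathbb P\1_{\Delta})$ exist. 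I expect the only genuinely delicate point to be the realization that the discontinuity of $\1_{\Delta}$ is harmless precisely because $e^{-i\boldsymbol\Omega t}$ preserves $|f(\z)|$ pointwise, which makes the remainder $t$-independent and reduces the whole matter to approximating $\1_{\Delta}$ by smooth cut-offs in $L^{2}(|f|^{2}\,d\mathbf m)$.
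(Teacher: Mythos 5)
Your proof is correct and takes essentially the same route as the paper: both rest on Corollary~\ref{TrCl2} for the smoothed identification $\mathbb P J_\varphi$ and then pass to $\mathbb P\1_{\Delta}$ by an approximation argument whose engine is the commutation of the multiplication operators $J_\varphi$, $\1_{\Delta}$ with $e^{-i\boldsymbol\Omega t}$. The only difference is presentational: the paper phrases the limit passage as the identity $W_\pm(T(\omega),\boldsymbol\Omega;\mathbb P J_\varphi)=W_\pm(T(\omega),\boldsymbol\Omega;\mathbb P\1_{\Delta})J_\varphi$ combined with density of $C_0^\infty(\Delta)$ in $L^2(\Delta)$, whereas you unpack that standard density lemma into an explicit $\varepsilon/3$ Cauchy argument via the $t$-independent remainder bound $\|(\1_{\Delta}-\varphi)f\|$.
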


  \begin{proof} 
 The wave operators  
 $W_\pm ( T(\omega), \boldsymbol\Omega; \mathbb P J_{\varphi})$ exist due to 
Corollary~\ref{TrCl2}. 
The operator $\boldsymbol\Omega$ commutes with $J_\varphi$ and $\1_{\Delta}$, and hence so does  
$P^{(\rm ac)}_{\boldsymbol\Omega}$. 
As a consequence, 
\begin{align*}
W_\pm\big(T(\omega), \boldsymbol\Omega; \mathbb P J_\varphi\big) = 
W_\pm\big(T(\omega), \boldsymbol\Omega; \mathbb P\1_{\Delta}\big)J_\varphi.
\end{align*}
Since $C^\infty_0(\Delta)$ is dense in $L^2(\Delta)$, the existence of 
$W_\pm\big(T(\omega), \boldsymbol\Omega; \mathbb P\1_{\Delta}\big)$ follows.
  \end{proof}

 \subsection{Isometry of wave operators}

We start with a  relatively standard (cf., for example, \cite[Lemma~2.6.4]{Yafaev})
analytic result. Recall that $ \mathbb W^{2, 1}_{\rm loc}(\Delta)\subset C^1 (\Delta)$. As usual, we use the notation $w(x)=\omega(e^{ix}) $, $w'(x)=\omega'(e^{ix}) $.

\begin{lemma} 
%\label{XY} 
Let $\Delta\subset \mathbb T$ be an arc.      
Suppose that $\omega \in \mathbb W^{2, 1}_{\rm loc}(\Delta)$  
and $\omega'(\z)\not= 0$ for $\z\in \Delta$. Then 
   \begin{equation}
\slim_{t\to\pm\infty}{\mathbb P}_+ e^{-i\boldsymbol\Omega t} \1_{\Delta}=0 \q\mbox{if}\q\pm  
\omega'(\z) >0  \q\mbox{for}\q \z\in \Delta, 
 \label{eq:HT3}\end{equation}
 and
   \begin{equation}
\slim_{t\to\pm\infty}{\mathbb P}_- e^{-i\boldsymbol\Omega t} \1_{\Delta}=0 \q\mbox{if}\q \pm
\omega'(\z) <0 \q\mbox{for}\q \z\in \Delta. 
 \label{eq:HT3-}\end{equation}
 \end{lemma}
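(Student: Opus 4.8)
The plan is to treat all four assertions by a single non-stationary phase argument, so I will spell out the first relation in \eqref{eq:HT3} (the case $\mathbb P_+$, $t\to+\infty$, $\omega'>0$) and note at the end that the other three are identical after trivial sign changes. First I would reduce to a dense set of test vectors: since $\|\mathbb P_+ e^{-i\boldsymbol\Omega t}\1_\Delta\|\le 1$ uniformly in $t$, and since $\1_\Delta^2=\1_\Delta$ makes $\mathbb P_+ e^{-i\boldsymbol\Omega t}\1_\Delta g$ depend only on $\1_\Delta g\in L^2(\Delta)$, it is enough to prove $\mathbb P_+ e^{-i\boldsymbol\Omega t}h\to 0$ for $h$ ranging over the dense subclass $C_0^\infty(\Delta)$ (viewed in $L^2(\mathbb T)$, so that $\1_\Delta h=h$). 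Passing to the variable $x$ via $\z=e^{ix}$, $w(x)=\omega(e^{ix})$, and writing $h(x)\in C_0^\infty(X)$ for the test function, the vector $e^{-i\boldsymbol\Omega t}h$ becomes $e^{-iw(x)t}h(x)$, whose Fourier coefficients are the oscillatory integrals
\[
c_n(t)=\frac{1}{2\pi}\int_X e^{ip_n(x)}h(x)\,dx,\qquad p_n(x)=-nx-w(x)t .
\]
Because $F$ is unitary and $\mathbb P_+$ retains exactly the modes $n\ge0$, the goal is to show $\|\mathbb P_+ e^{-i\boldsymbol\Omega t}h\|^2=\sum_{n\ge0}|c_n(t)|^2\to0$.

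The mechanism is that the phase has no stationary point on $\operatorname{supp}h$ for the relevant modes. Indeed $p_n'(x)=-n-w'(x)t$, and on the compact set $\operatorname{supp}h$ the hypotheses $\omega\in\mathbb W^{2,1}_{\rm loc}(\Delta)\subset C^1(\Delta)$ and $\omega'\neq0$ give $w'(x)\ge c>0$; hence for $n\ge0$ and $t>0$,
\[
|p_n'(x)|=n+w'(x)t\ge n+ct .
\]
I would then integrate by parts once, using $e^{ip_n}=(ip_n')^{-1}\tfrac{d}{dx}e^{ip_n}$; the boundary terms vanish since $h$ has compact support in $X$, and the differentiation is legitimate because $w\in\mathbb W^{2,1}$ makes $w'$ absolutely continuous with $p_n''=-w''t$ and $w''\in L^1(\operatorname{supp}h)$. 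This produces two terms, and estimating them with the lower bound above yields
\[
|c_n(t)|\le \frac{C_1}{n+ct}+\frac{C_2\,t}{(n+ct)^2},
\]
with $C_1,C_2$ depending only on $h$ (through $\|h'\|_{L^1}$ and $\|h\,w''\|_{L^1}$).

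The point I expect to be the genuine obstacle — and the reason a single pointwise-in-$t$ bound such as $|c_n(t)|=O(1/t)$ is \emph{not} enough — is that one must sum the squares over all $n\ge0$ and still obtain a quantity tending to $0$. This is exactly why I keep the full $n$-dependence above: both model sums
\[
\sum_{n\ge0}\frac{1}{(n+ct)^2}\le \frac{1}{(ct)^2}+\frac{1}{ct},\qquad
\sum_{n\ge0}\frac{t^2}{(n+ct)^4}\le \frac{1}{c^4t^2}+\frac{1}{3c^3t}
\]
are $O(t^{-1})$, so $\sum_{n\ge0}|c_n(t)|^2\le 2C_1^2\sum_{n\ge0}(n+ct)^{-2}+2C_2^2\sum_{n\ge0}t^2(n+ct)^{-4}\to0$ as $t\to+\infty$, which is the claim.

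Finally, the remaining three cases differ only in which frequency half-line and which sign of $t$ are in play, and in each the stated monotonicity hypothesis is precisely what guarantees $|p_n'(x)|\ge |n|+c|t|$ on $\operatorname{supp}h$ over the relevant modes: for $\mathbb P_+$ with $t\to-\infty$ one uses $\omega'<0$, while for $\mathbb P_-$ (modes $n<0$) one uses $\omega'<0$ as $t\to+\infty$ and $\omega'>0$ as $t\to-\infty$. The integration by parts and the $\ell^2$-summation are then verbatim, so all four strong limits vanish.
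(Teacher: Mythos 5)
Your proof is correct and follows essentially the same route as the paper's: one integration by parts in the Fourier--coefficient oscillatory integrals (justified via $w''\in L^1$ on the support), the lower bound $|p_n'(x)|\ge |n|+c|t|$ over the relevant half-line of modes, an $\ell^2$ summation yielding $\|\mathbb P_\pm e^{-i\boldsymbol\Omega t}h\|^2 = O(|t|^{-1})$, and a density/uniform-boundedness argument to pass from $C_0^\infty(\Delta)$ to all of $L^2(\Delta)$. The only cosmetic difference is that you keep the two terms from the integration by parts separate and sum their squares, whereas the paper absorbs $t/(n+c|t|)^2 \le c^{-1}(n+c|t|)^{-1}$ into the single pointwise bound $C(n+|t|)^{-1}$ before summing.
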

 
\begin{proof}
Let us check, for example, \e{eq:HT3}. 
%We may assume that $f\in C_{0}^\infty (\Delta)$.
Let $F: L^2 ({\mathbb T})\to \ell^2 ({\mathbb Z})$ 
be the discrete Fourier transform defined in  
\eqref{fourier:eq}. For all $u\in L^2 ({\mathbb T})$, we have
\begin{equation}
\| {\mathbb P}_+  u\|^2=\sum_{n=0}^\infty |(F u)_{n}|^2 .
\label{eq:HT4}
\end{equation}
Let us use the representation
\begin{equation*}
(F e^{-i\boldsymbol\Omega t} \1_{\Delta}f)_{n} 
 = \int_{\Delta} \z^{-n} e^{-i\omega (\z) t} f(\z) d{\bf m} (\z) 
 = (2\pi)^{-1}\int_{X} e^{-in x - i w(x)t} {\tt f} (x)dx,
 %\label{eq:HT5}
 \end{equation*}
 where $f\in L^2(\mathbb T)$,  ${\tt f} (x)= f(e^{ix})$ and $X\subset {\mathbb R}/(2\pi  {\mathbb Z})$ is an interval such that $e^{iX} = \Delta$.
Assuming that $f\in C^\infty_0(\Delta)$, that is, $ {\tt f} \in C^\infty_0(X)$,  we integrate by parts to get
\begin{equation}
 (F e^{-i\boldsymbol\Omega t} f)_{n} =-i (2\pi)^{-1} \int_{X} 
 e^{-in x - i w (x)t} \Big(\frac{{\tt f}(x)}{n+w ' (x)t} \Big)'dx.
\label{eq:HTa} \end{equation}
 Using that    
 $\pm w'(x)\geq c>0$ on ${\rm supp}\,{\tt f}$ and $\pm t >0$, we obtain the bound
 \[
 n+w' (x)t \geq  n+c|t|.
 \]
Furthermore, the condition $\omega\in \mathbb W^{2,1}_{\rm loc}(\Delta)$ ensures that  $w\in \mathbb W^{2,1}_{\rm loc}(X)$ and hence $w'' \in L^1_{\rm loc}(X)$. Thus  \e{eq:HTa} yields an estimate
   \begin{equation*}
|(F e^{-i\boldsymbol\Omega t} f)_{n}| \leq C (n+ |t|)^{-1}.
 %\label{eq:HT7}
 \end{equation*}
 Substituting this estimate into \e{eq:HT4}  where 
 $u=e^{-i\boldsymbol\Omega t} f$, we see that
   \begin{equation*}
 \|{\mathbb P}_+  e^{-i\boldsymbol\Omega t} f \|^2\leq C\sum_{n=0}^\infty  (n+ |t| )^{-2}\leq C_{1} |t|^{-1}.
% \label{eq:HT8}
 \end{equation*} 
 Since $C^\infty_0(\Delta)$ is dense in $L^2(\Delta)$, this leads to \eqref{eq:HT3}.  
Relation \e{eq:HT3-} can be checked quite similarly.
       \end{proof}
        
        Let us discuss now the isometry of the   ``local" wave operators 
 $W_\pm(T, \boldsymbol\Omega; \mathbb P\1_{\Delta})$ where, as usual, ${\mathbb P} = {\mathbb P}_{+}$. Under the assumptions below their existence follows from Theorem~\ref{YX1} because  
 $\varphi \omega    \in {\mathbb B}_{1,1}^1 (\mathbb T)$ if 
 $\omega\in {\mathbb W}^{2, 1}_{\rm loc}(\Delta)$ and $\varphi\in  C_{0}^\infty (\Delta)$.

 % Under the assumptions of Lemma~\ref{XY} their existence follows from Theorem  \ref{Trloc} because $\mathbb W^{2,1}_{\rm loc}(\Delta)\subset \mathbb B^1_{11}(\Delta)$. 

\begin{theorem} \label{XY1} 
Let $\omega =\bar{\omega} \in \mathbb W^{2, 1}_{\rm loc}(\Delta)$ 
for some arc $\Delta\subset \mathbb T$. 
If $\pm\omega'(\z) < 0$ for $\z\in\Delta$, then 
the operator $W_\pm ( T(\omega), \boldsymbol\Omega; \mathbb P\1_{\Delta})$ 
is isometric on $\Ran \1_{\Delta}$ and
\begin{equation}
 W_{\mp} (T  (\omega), \boldsymbol\Omega; {\mathbb P} \1_{\Delta}) = 0.
 \label{eq:HT9}
\end{equation}  
\end{theorem}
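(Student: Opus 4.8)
The plan is to deduce both assertions from the preceding lemma (relations \eqref{eq:HT3} and \eqref{eq:HT3-}) combined with the elementary properties of wave operators collected in Subsection~\ref{ST:subsect}. The existence of $W_{\pm}(T(\omega),\boldsymbol\Omega;\mathbb P\1_{\Delta})$ is already secured by Theorem~\ref{YX1}, because $\omega\in\mathbb W^{2,1}_{\rm loc}(\Delta)$ implies $\varphi\omega\in\mathbb B^1_{11}(\mathbb T)$ for every $\varphi\in C_0^\infty(\Delta)$; so the whole task is to compute the norms of the approximating family and let $t\to\pm\infty$. As preparation I would record the commutation relations: since $\boldsymbol\Omega$ is multiplication by $\omega$, the operators $e^{-i\boldsymbol\Omega t}$, $\1_{\Delta}$ and $P^{(\rm ac)}_{\boldsymbol\Omega}$ pairwise commute, and Lemma~\ref{Omegap:lem} gives $\Ran\1_{\Delta}=L^2(\Delta)\subset\mathcal H^{(\rm ac)}_{\boldsymbol\Omega}$, so that $\1_{\Delta}P^{(\rm ac)}_{\boldsymbol\Omega}=\1_{\Delta}$. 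Hence, with $J=\mathbb P\1_{\Delta}=\mathbb P_+\1_{\Delta}$,
\begin{equation*}
J e^{-i\boldsymbol\Omega t}P^{(\rm ac)}_{\boldsymbol\Omega}=\mathbb P_+\1_{\Delta}e^{-i\boldsymbol\Omega t}P^{(\rm ac)}_{\boldsymbol\Omega}=\mathbb P_+ e^{-i\boldsymbol\Omega t}\1_{\Delta}.
\end{equation*}

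To prove the isometry on $\Ran\1_{\Delta}$ I would invoke property (d) of Subsection~\ref{ST:subsect}. Fix $f\in\Ran\1_{\Delta}$; then $P^{(\rm ac)}_{\boldsymbol\Omega}f=f$ and $Je^{-i\boldsymbol\Omega t}f=\mathbb P_+ e^{-i\boldsymbol\Omega t}f$. Since $e^{-i\boldsymbol\Omega t}$ is unitary and $L^2(\mathbb T)=\mathbb H^2_+\oplus\mathbb H^2_-$,
\begin{equation*}
\|f\|^2=\|e^{-i\boldsymbol\Omega t}f\|^2=\|\mathbb P_+ e^{-i\boldsymbol\Omega t}f\|^2+\|\mathbb P_- e^{-i\boldsymbol\Omega t}f\|^2.
\end{equation*}
In the upper-sign case $\omega'(\z)<0$ on $\Delta$, relation \eqref{eq:HT3-} at $t\to+\infty$ gives $\|\mathbb P_- e^{-i\boldsymbol\Omega t}f\|\to0$, whence $\|Je^{-i\boldsymbol\Omega t}f\|\to\|f\|$. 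This is exactly condition \eqref{eq:WOac}, so property (d) yields $\|W_+f\|=\|f\|$ for every such $f$, i.e. isometry of $W_+$ on $\Ran\1_{\Delta}$. The lower-sign case is identical, using \eqref{eq:HT3-} at $t\to-\infty$ under $\omega'(\z)>0$.

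For the vanishing \eqref{eq:HT9} I would use the unitarity of $e^{iT t}$: for arbitrary $f\in L^2(\mathbb T)$ the preceding display gives
\begin{equation*}
\bigl\|e^{iT t}J e^{-i\boldsymbol\Omega t}P^{(\rm ac)}_{\boldsymbol\Omega}f\bigr\|=\bigl\|\bigl(\mathbb P_+ e^{-i\boldsymbol\Omega t}\1_{\Delta}\bigr)P^{(\rm ac)}_{\boldsymbol\Omega}f\bigr\|.
\end{equation*}
In the upper-sign case the wave operator with the opposite sign is $W_-$, and \eqref{eq:HT3} at $t\to-\infty$ (valid since $\omega'(\z)<0$) states precisely that $\slim_{t\to-\infty}\mathbb P_+ e^{-i\boldsymbol\Omega t}\1_{\Delta}=0$; hence the right-hand side tends to $0$ and $W_-=0$ on all of $L^2(\mathbb T)$. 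Symmetrically, in the lower-sign case the operator $W_+$ vanishes by \eqref{eq:HT3} at $t\to+\infty$ under $\omega'(\z)>0$.

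I expect essentially no remaining obstacle here, since the only genuine analytic input—the $|t|^{-1/2}$ dispersive decay of $\mathbb P_{\mp}e^{-i\boldsymbol\Omega t}\1_{\Delta}$—has already been supplied by the preceding lemma via integration by parts. The one point that must be handled with care is the consistent matching of three signs: the sign of $\omega'$ on $\Delta$, the direction $t\to\pm\infty$, and the choice of $\mathbb P_+$ versus $\mathbb P_-$ in \eqref{eq:HT3}--\eqref{eq:HT3-}. A single sign slip would interchange the isometry and the vanishing conclusions, so I would verify the bookkeeping explicitly in both the upper- and lower-sign cases, as indicated above.
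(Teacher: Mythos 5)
Your proof is correct, and the sign bookkeeping in both cases checks out against the lemma; the vanishing \eqref{eq:HT9} is obtained exactly as in the paper, directly from \eqref{eq:HT3} together with the unitarity of $e^{iTt}$ and the commutation $\mathbb P_+\1_{\Delta}e^{-i\boldsymbol\Omega t}P^{(\rm ac)}_{\boldsymbol\Omega}=\mathbb P_+e^{-i\boldsymbol\Omega t}\1_{\Delta}$. For the isometry you take a slightly different route from the paper: you verify condition \eqref{eq:WOac} for $J=\mathbb P\1_{\Delta}$ head-on, writing $\|f\|^2=\|\mathbb P_+e^{-i\boldsymbol\Omega t}f\|^2+\|\mathbb P_-e^{-i\boldsymbol\Omega t}f\|^2$ for $f\in\Ran\1_{\Delta}$, killing the $\mathbb P_-$ term by \eqref{eq:HT3-}, and then invoking property (d). The paper instead first deduces from \eqref{eq:HT3-} that $W_{\pm}(T_-(\omega),\boldsymbol\Omega;\mathbb P_-\1_{\Delta})=0$, inserts this into the duplex decomposition (cf.\ \eqref{eq:HT2}) to conclude $W_{\pm}(T(\omega),\boldsymbol\Omega;\mathbb P\1_{\Delta})=W_{\pm}(\mathbf T(\omega),\boldsymbol\Omega;\1_{\Delta})$, and then applies property (d) to the identification $\1_{\Delta}$, for which \eqref{eq:WOac} is immediate because $\1_{\Delta}$ commutes with $e^{-i\boldsymbol\Omega t}$ and $\1_{\Delta}P^{(\rm ac)}_{\boldsymbol\Omega}=\1_{\Delta}$ by Lemma~\ref{Omegap:lem}. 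The two arguments consume the same analytic input, namely \eqref{eq:HT3} and \eqref{eq:HT3-}, and differ only in packaging: yours bypasses the duplex operator $\mathbf T$ entirely, reducing the isometry step to a one-line Pythagorean computation, while the paper's route has the small dividend of exhibiting the identity between the $\mathbb P\1_{\Delta}$-wave operator and the duplex wave operator, which is convenient elsewhere in the text. Your appeal to Theorem~\ref{YX1} for existence, via $\varphi\omega\in\mathbb B^1_{11}(\mathbb T)$ for $\varphi\in C_0^\infty(\Delta)$, is exactly the justification the paper itself records just before the theorem, so nothing is missing.
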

 
 \begin{proof}
Equality \e{eq:HT9}  follows directly from 
\e{eq:HT3}. Similarly,   relations  \e{eq:HT3-} imply that
$
  W_{\pm} (T_{-}  (\omega), \boldsymbol\Omega; {\mathbb P}_- \1_{\Delta})=0.
$
Since (cf. \e{eq:HT2})
\[
W_{\pm} ({\bf T }(\omega), \boldsymbol\Omega; \1_{\Delta}) 
 = W_{\pm} (T_{+}(\omega), \boldsymbol\Omega; {\mathbb P}_{+} \1_{\Delta})+
W_{\pm} (T_{-}(\omega), \boldsymbol\Omega; {\mathbb P}_{-}  \1_{\Delta}), 
 \]
we see that 
\begin{align*}
  W_\pm ( T(\omega), \boldsymbol\Omega; \mathbb P\1_{\Delta}) =W_\pm (\mathbf T (\omega), \boldsymbol\Omega; \1_{\Delta}) .
\end{align*}
By 
property (d) of wave operators (see Section~3.1),
the operator on the right
%of \eqref{eq:HT11} 
is isometric on ${\rm Ran}\, \1_\Delta$   because   $\1_\Delta   P_{\boldsymbol\Omega}^{\rm(ac)}=\1_\Delta  $ according to Lemma~\ref{Omegap:lem}.
Hence the same is true for the operator on the left.
  \end{proof}
    
\subsection{Main result for smooth symbols.} 

The results obtained so far can be summarized in the following two theorems.
Recall that the sets $\Delta^{(\pm)}$ were defined in \e{eq:dpm}.
  
\begin{theorem}\label{deltapm:thm} 
 Suppose that $\omega=\bar{\omega}\in L^\infty ({\mathbb T})$ and that
$\omega\in \mathbb W^{2, 1}_{\rm loc}(\Delta)$  for some open set $\Delta\subset \mathbb T$ of full measure.  
Then
\begin{enumerate}[\rm(i)]

\item
the wave operators $W_\pm (T (\omega), \boldsymbol\Omega; \mathbb P)$ exist, and the intertwining property 
\begin{equation*}
T (\omega)W_{\pm} (T(\omega), \boldsymbol\Omega; \mathbb P) 
=  W_{\pm} (T(\omega), \boldsymbol\Omega; \mathbb P)\,\boldsymbol\Omega 
%\label{eq:intPtp}
\end{equation*}
 holds.

\item
The operators $W_\pm (T (\omega), \boldsymbol\Omega; \mathbb P)$ 
 are isometric on $\Ran \1_{\Delta^{(\pm)}}$ or, equivalently, 
\begin{equation}
 W_{\pm}^* (T(\omega), \boldsymbol\Omega; \mathbb P) W_{\pm} (T(\omega), \boldsymbol\Omega; \mathbb P)= \1_{\Delta^{(\pm)}}.
\label{eq:isom}\end{equation}
    Moreover, the relations 
\begin{align}\label{deltapm:eq}
W_\pm (T (\omega), \boldsymbol\Omega; \mathbb P\1_{\Delta^{(\pm)}})=W_\pm (T (\omega), \boldsymbol\Omega; \mathbb P)  \q \mbox{and} \q  W_\pm (T (\omega), \boldsymbol\Omega; \mathbb P\1_{\Delta^{(\mp)}})=0
\end{align} 
are satisfied. 

\item
The restriction of the Toeplitz operator $T (\omega)$ to the subspace
\begin{equation*}
 \mathcal H^{(\pm)}  
 = \Ran W_\pm (T (\omega), \boldsymbol\Omega; {\mathbb P} )
%\label{eq:ran}
\end{equation*}
of ${\mathbb H}^2 $
is unitarily equivalent to the  multiplication operator $\boldsymbol\Omega$ on 
the  subspace $\Ran \1_{\Delta^{(\pm)}}$  of $L^2 ({\mathbb T} )$. 
\end{enumerate}
\end{theorem}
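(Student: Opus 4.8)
The plan is to reduce the whole statement to the single-arc results of Theorems~\ref{YX1} and \ref{XY1} and then to assemble these over the open set $\Delta^{(\pm)}$ by a density argument. The point of departure is Lemma~\ref{aacc}: since $\Delta$ has full measure, $P^{(\rm ac)}_{\boldsymbol\Omega}=\1_{\Delta^{(+)}}+\1_{\Delta^{(-)}}$, and each $\1_{\Delta^{(\pm)}}$ commutes with $\boldsymbol\Omega$ and satisfies $\1_{\Delta^{(\pm)}}P^{(\rm ac)}_{\boldsymbol\Omega}=\1_{\Delta^{(\pm)}}$ by \eqref{eq:acy}. Consequently, once the two pieces $W_\pm(T,\boldsymbol\Omega;\mathbb P\1_{\Delta^{(+)}})$ and $W_\pm(T,\boldsymbol\Omega;\mathbb P\1_{\Delta^{(-)}})$ are shown to exist, the identity
\begin{equation*}
W_\pm(T,\boldsymbol\Omega;\mathbb P)=W_\pm(T,\boldsymbol\Omega;\mathbb P\1_{\Delta^{(+)}})+W_\pm(T,\boldsymbol\Omega;\mathbb P\1_{\Delta^{(-)}})
\end{equation*}
holds, and the whole theorem reduces to an analysis of these two summands.

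Since $\omega\in\mathbb W^{2,1}_{\rm loc}(\Delta)\subset C^1(\Delta)$, the set $\Delta^{(\pm)}$ is open, hence a disjoint union of arcs $\Delta^{(\pm)}_j\subset\Delta$, and $\varphi\omega\in\mathbb B^1_{11}$ for every $\varphi\in C_0^\infty(\Delta^{(\pm)}_j)$; thus Theorem~\ref{YX1} yields the existence of $W_\pm(T,\boldsymbol\Omega;\mathbb P\1_{\Delta^{(\pm)}_j})$ for each $j$. Because any $f\in C_0^\infty(\Delta^{(\pm)})$ is supported in finitely many of these arcs, the strong limit exists for such $f$; as these functions are dense in $\Ran\1_{\Delta^{(\pm)}}$ and the operators $e^{iTt}\mathbb P\1_{\Delta^{(\pm)}}e^{-i\boldsymbol\Omega t}$ are bounded by $1$ uniformly in $t$, the limit propagates to all of $\Ran\1_{\Delta^{(\pm)}}$. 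This establishes the existence of $W_\pm(T,\boldsymbol\Omega;\mathbb P)$ asserted in (i), and the intertwining property follows automatically from the intertwining property~(b) of Section~\ref{ST:subsect}.

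For (ii) I would fix, say, the upper sign. On each arc of $\Delta^{(-)}$ one has $\omega'>0$, so the lower-sign case of Theorem~\ref{XY1} gives $W_+(T,\boldsymbol\Omega;\mathbb P\1_{\Delta^{(-)}_j})=0$; summation over $j$ yields $W_+(T,\boldsymbol\Omega;\mathbb P\1_{\Delta^{(-)}})=0$, which is the second relation in \eqref{deltapm:eq} and, combined with the decomposition of the first paragraph, gives the first relation $W_+(T,\boldsymbol\Omega;\mathbb P)=W_+(T,\boldsymbol\Omega;\mathbb P\1_{\Delta^{(+)}})$. The isometry I would obtain from property~(d) of Section~\ref{ST:subsect}: for $f\in\Ran\1_{\Delta^{(+)}}$ the vector $e^{-i\boldsymbol\Omega t}f$ remains in $\Ran\1_{\Delta^{(+)}}$, and since $\omega'<0$ there, the strong-limit relation \eqref{eq:HT3-}, extended from a single arc to $\Delta^{(+)}$ by the same density argument, gives $\mathbb P_-e^{-i\boldsymbol\Omega t}f\to0$, whence $\|\mathbb P e^{-i\boldsymbol\Omega t}f\|\to\|f\|$. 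This is exactly condition \eqref{eq:WOac} verified on the dense set $C_0^\infty(\Delta^{(+)})$, so $W_+$ is isometric on $\Ran\1_{\Delta^{(+)}}$.

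It then remains to read off \eqref{eq:isom} and (iii). The operator $W_+$ annihilates $\Ran\1_{\Delta^{(-)}}$ (just shown) and $\Ran\1_{\Delta_0}$ (by \eqref{eq:acx}), while it is isometric on $\Ran\1_{\Delta^{(+)}}$; since $\1_{\Delta^{(+)}}+\1_{\Delta^{(-)}}+\1_{\Delta_0}=\1$ by \eqref{eq:ac}, this makes $W_+$ a partial isometry with initial space $\Ran\1_{\Delta^{(+)}}$, i.e. $W_+^*W_+=\1_{\Delta^{(+)}}$. Then $W_+$ maps $\Ran\1_{\Delta^{(+)}}$ unitarily onto the closed subspace $\mathcal H^{(+)}=\Ran W_+$, and the intertwining $T(\omega)W_+=W_+\boldsymbol\Omega$ turns this unitary into the equivalence claimed in (iii); the lower sign is handled symmetrically. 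The one genuinely technical point, which I expect to be the main obstacle, is the passage from the single-arc statements of Theorems~\ref{YX1}, \ref{XY1} and of \eqref{eq:HT3-} to the possibly infinitely many arcs of $\Delta^{(\pm)}$: this is controlled throughout by uniform boundedness together with the density of compactly supported functions, and the decisive device is to establish isometry \emph{directly} from \eqref{eq:WOac} on $\Delta^{(\pm)}$ rather than by summing isometries arc by arc, so that orthogonality of the individual arc-ranges never needs to be checked.
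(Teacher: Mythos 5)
Your proposal is correct and takes essentially the same route as the paper: it rests on the decomposition $P^{\rm(ac)}_{\boldsymbol\Omega}=\1_{\Delta^{(+)}}+\1_{\Delta^{(-)}}$ from Lemma~\ref{aacc} (the paper's \eqref{eq:ac1}), gets existence from Theorem~\ref{YX1}, the vanishing relations and isometry from Theorem~\ref{XY1} — your direct verification of \eqref{eq:WOac} via \eqref{eq:HT3-} is precisely how Theorem~\ref{XY1} is proved in the paper — and then reads off \eqref{eq:isom} and part (iii) from the partial-isometry structure together with the intertwining property. Your explicit handling of the passage from single arcs to the open sets $\Delta^{(\pm)}$ (density plus uniform boundedness, establishing isometry on $\Delta^{(\pm)}$ directly rather than summing arc-by-arc isometries, which would require orthogonality of the per-arc ranges) is a step the paper leaves implicit when it applies Theorem~\ref{XY1} ``separately to $\Delta^{(+)}$ and $\Delta^{(-)}$'', but it is the intended reading and does not constitute a different approach.
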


\begin{proof} 
The wave operators $W_\pm (T (\omega), \boldsymbol\Omega; \mathbb P \1_{\Delta} )$ exist by 
Theorem~\ref{XY1}.
This implies the existence of $W_\pm (T (\omega), \boldsymbol\Omega; \mathbb P )$  
because the set $\Delta$ has full measure. The intertwining property is a direct consequence of the existence of wave operators, see \eqref{intertwine:eq}.
By \e{eq:ac1} we have the equality
\begin{align*}
W_\pm (T, \boldsymbol\Omega; \mathbb P) = W_\pm (T, \boldsymbol\Omega; \mathbb P\1_{\Delta^{(+)}})
+ W_\pm (T, \boldsymbol\Omega; \mathbb P\1_{\Delta^{(-)}}).
\end{align*}
Now relations \eqref{eq:isom} and \eqref{deltapm:eq} 
follow from Theorem~\ref{XY1} applied separately to $\Delta^{(+)}$ and 
to $\Delta^{(-)}$.  
\end{proof}

The next assertion shows that if $\Delta={\mathbb T}$, then the wave operators $W_\pm (T (\omega), \boldsymbol\Omega; \mathbb P)$
are  complete.

\begin{theorem}\label{compl}
If  $\omega=\bar{\omega}\in  \mathbb W^{2, 1}_{\rm loc}({\mathbb T})$, then all conclusions of Theorem~\ref{deltapm:thm}  are true and
\begin{align}\label{deltacompl:eq}
\Ran W_\pm (T (\omega), \boldsymbol\Omega; \mathbb P)
= \mathbb H^2
\end{align}
or, equivalently,
 \[
  W_\pm (T (\omega), \boldsymbol\Omega;  \mathbb P ) W_\pm (T (\omega), \boldsymbol\Omega;  \mathbb P )^*=I
 \]
 $($the identity operator on $ {\mathbb H}^2)$.
In this case the operators $T (\omega)$ and the restriction of $\boldsymbol\Omega$ 
 to  $\Ran \1_{\Delta^{(\pm)}}$
are unitarily equivalent.
 \end{theorem}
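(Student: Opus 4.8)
The plan is to observe that the completeness assertion is, up to a change of notation, already contained in Corollary~\ref{TrCl1}, while everything else comes from Theorem~\ref{deltapm:thm}; the only genuine task is to verify that the single hypothesis $\omega\in\mathbb W^{2,1}_{\rm loc}(\mathbb T)$ triggers both results at once. First I would note that, since $\mathbb T$ is compact, $\mathbb W^{2,1}_{\rm loc}(\mathbb T)=\mathbb W^{2,1}(\mathbb T)$, and by the embedding $\mathbb W^{2,1}(\mathbb T)\subset\mathbb B^1_{11}$ recalled in Section~3.4 we have $\omega\in\mathbb B^1_{11}$. In particular $\omega\in C^1(\mathbb T)$, so $\Delta=\mathbb T$ has full measure and all hypotheses of Theorem~\ref{deltapm:thm} hold. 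This yields the existence of $W_\pm(T(\omega),\boldsymbol\Omega;\mathbb P)$, the intertwining property, and the isometry \eqref{eq:isom}, i.e.\ $W_\pm^*(T,\boldsymbol\Omega;\mathbb P)\,W_\pm(T,\boldsymbol\Omega;\mathbb P)=\1_{\Delta^{(\pm)}}$; thus all conclusions of Theorem~\ref{deltapm:thm} are in force.

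Next I would invoke Corollary~\ref{TrCl1}, whose sole hypothesis $\omega=\bar\omega\in\mathbb B^1_{11}$ is now available. Since by definition $T=T_+$, $\mathbb P=\mathbb P_+$ and $\mathbb H^2=\mathbb H^2_+$, the wave operator $W_\pm(T(\omega),\boldsymbol\Omega;\mathbb P)$ coincides verbatim with the operator $W_\pm(T_+(\omega),\boldsymbol\Omega;\mathbb P_+)$ appearing there, and the first relation in \eqref{WOcompl:eq} reads exactly $\Ran W_\pm(T(\omega),\boldsymbol\Omega;\mathbb P)=\mathbb H^2$. This is precisely the completeness \eqref{deltacompl:eq}.

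For the equivalent operator identity I would combine the two facts just assembled: by \eqref{eq:isom} the operator $W_\pm$ is a partial isometry with initial subspace $\Ran\1_{\Delta^{(\pm)}}$, and by \eqref{deltacompl:eq} its range is all of $\mathbb H^2$. Hence $W_\pm W_\pm^*$ is the orthogonal projection onto $\Ran W_\pm=\mathbb H^2$, that is, the identity $I$ on $\mathbb H^2$. Finally, the isometry \eqref{eq:isom} together with the intertwining property \eqref{intertwine:eq} and the completeness \eqref{deltacompl:eq} show, exactly as in property~(e) of Section~3.1, that the restriction of $T(\omega)$ to $\Ran W_\pm=\mathbb H^2$ is unitarily equivalent to the restriction of $\boldsymbol\Omega$ to $\Ran\1_{\Delta^{(\pm)}}$.

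Since every ingredient is quoted from earlier results, there is no real analytic obstacle; the one point that must not be glossed over is the reduction $\mathbb W^{2,1}_{\rm loc}(\mathbb T)\subset\mathbb B^1_{11}$, which is what lets the trace-class completeness of Corollary~\ref{TrCl1} be imported under the stronger smoothness hypothesis needed for the isometry in Theorem~\ref{deltapm:thm}. I expect this bookkeeping---matching the hypotheses of the two source statements and identifying $W_\pm(T(\omega),\boldsymbol\Omega;\mathbb P)$ with $W_\pm(T_+(\omega),\boldsymbol\Omega;\mathbb P_+)$---to constitute essentially the entire proof.
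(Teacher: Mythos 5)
Your proposal is correct and takes essentially the same approach as the paper, whose entire proof of this theorem is the remark that the completeness \eqref{deltacompl:eq} is equivalent to the first equality in \eqref{WOcompl:eq} of Corollary~\ref{TrCl1} (applicable since $\mathbb W^{2,1}(\mathbb T)\subset \mathbb B^1_{11}$), with all other conclusions inherited from Theorem~\ref{deltapm:thm} exactly as you argue. One small correction in your bookkeeping: the inclusion $\omega\in C^1(\mathbb T)$ comes from $\omega\in\mathbb W^{2,1}_{\rm loc}(\mathbb T)$ itself (as the paper notes before Theorem~\ref{XY1}), not from $\omega\in\mathbb B^1_{11}$, which only embeds into $C(\mathbb T)$.
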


\begin{proof}
The completeness \eqref{deltacompl:eq} is equivalent to the first equality \eqref{WOcompl:eq}.
\end{proof}

Let us state two consequences of this result. The first one concerns the spectral multiplicity of the Toeplitz operator $T(\omega)$.

\begin{corollary}\label{compl1}
Let
an interval $\Lambda$   satisfy condition \eqref{eq:TH}, and
let the numbers $n^{(\pm)}= n^{(\pm)}(\Lambda) $   be defined by formula \e{eq:AU}.
Then $n^{(+)} = n^{(-)}$ and 
the spectral representation of  
the operator $ T(\omega) E_{T(\omega) } (\Lambda)$ is realized on the space 
$L^{2}(\Lambda; {\mathbb C}^{n^{(\pm)}})$.
%,  
%where $n^{(+)}=n^{(-)}$. 
In other words, the spectral multiplicity 
of the operator $T(\omega)$ on $\Lambda$ equals $n^{(+)} = n^{(-)}$.
 \end{corollary}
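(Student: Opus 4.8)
The plan is to derive Corollary~\ref{compl1} directly by combining the completeness statement of Theorem~\ref{compl} with the spectral multiplicity computation for $\boldsymbol\Omega$ contained in Theorem~\ref{omega}. The governing principle is simple: under the smoothness hypothesis $\omega\in\mathbb W^{2,1}_{\rm loc}(\mathbb T)$, Theorem~\ref{compl} asserts that the wave operators $W_\pm=W_\pm(T(\omega),\boldsymbol\Omega;\mathbb P)$ are isometric and complete, with $\Ran W_\pm=\mathbb H^2$. By property~(e) in Section~\ref{ST:subsect} this gives a unitary equivalence between $T(\omega)$ (which is absolutely continuous, indeed $\sigma(T)=\sigma(\boldsymbol\Omega)$ in the smooth case) and the restriction of $\boldsymbol\Omega$ to $\Ran\1_{\Delta^{(\pm)}}$. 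Unitarily equivalent operators have identical spectral multiplicity functions, so I would transport the multiplicity of $\boldsymbol\Omega\1_{\Delta^{(\pm)}}$ on $\Lambda$, already computed in Theorem~\ref{omega} to be $n^{(\pm)}(\Lambda)$, over to $T(\omega)$.

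First I would fix $\Lambda$ satisfying \eqref{eq:TH}; note that since $\omega$ is smooth there are no jump discontinuities, so ${\sf S}=\varnothing$ and consequently $s^{(\pm)}=0$ in the notation of \eqref{eq:countZ}. Then I would invoke \eqref{eq:isom} and the second relation in \eqref{deltapm:eq}, namely $W_\pm^*W_\pm=\1_{\Delta^{(\pm)}}$ together with $\Ran W_\pm=\mathbb H^2$, to conclude that $W_\pm$ restricts to a unitary operator from $\Ran\1_{\Delta^{(\pm)}}$ onto $\mathbb H^2$ intertwining $\boldsymbol\Omega$ and $T(\omega)$. Combined with the intertwining property $T(\omega)W_\pm=W_\pm\boldsymbol\Omega$, this realizes the spectral representation of $T(\omega)E_{T(\omega)}(\Lambda)$ on the same space $L^2(\Lambda;\mathbb C^{n^{(\pm)}})$ on which the spectral representation of $\boldsymbol\Omega E_{\boldsymbol\Omega}(\Lambda)$ restricted to $\Ran\1_{\Delta^{(\pm)}}$ is realized by Theorem~\ref{omega}. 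Hence $m_{T(\omega)}(\Lambda)=n^{(\pm)}$ for each sign.

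The equality $n^{(+)}=n^{(-)}$ then follows because the left-hand side $m_{T(\omega)}(\Lambda)$ is a single number independent of the sign choice: using the plus wave operator gives multiplicity $n^{(+)}$, and using the minus wave operator gives $n^{(-)}$, so the two must coincide. Alternatively, one observes that this is precisely the balance relation $n^{(+)}+s^{(+)}=n^{(-)}+s^{(-)}$ of Theorem~\ref{Ros_smooth:thm} specialized to $s^{(\pm)}=0$; I would remark that the present corollary is in fact consistent with, and subsumed by, that earlier multiplicity theorem, but here it is obtained independently through the scattering-theoretic route.

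I do not anticipate a serious obstacle here, since all the analytic work has been done in the preceding theorems; the only point requiring slight care is the logical direction. The danger would be to argue in a circle by quoting Theorem~\ref{Ros_smooth:thm}, which was itself cited from \cite{SobYaf}; I would instead derive the statement genuinely from Theorem~\ref{compl} so that it stands as a by-product of the scattering construction. The mildly technical step is verifying that unitary equivalence implemented by $W_\pm$ transports not merely spectral measures but the full spectral representation with its multiplicity space $\mathbb C^{n^{(\pm)}}$; this is immediate from the intertwining relation \eqref{intertwine:eq} once isometry and completeness on $\Lambda$ are localized via $E_{T(\omega)}(\Lambda)W_\pm=W_\pm E_{\boldsymbol\Omega}(\Lambda)$, which I would state explicitly before concluding.
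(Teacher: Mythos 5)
Your proof is correct and takes essentially the same route as the paper: the paper's own proof of Corollary~\ref{compl1} likewise combines Theorem~\ref{omega} (multiplicity $n^{(\pm)}$ of $\boldsymbol\Omega\1_{\Delta^{(\pm)}}$ on $\Lambda$) with the unitary equivalence between $T(\omega)$ and $\boldsymbol\Omega$ restricted to $\Ran\1_{\Delta^{(\pm)}}$ supplied by Theorem~\ref{compl}, concluding $n^{(+)}=n^{(-)}$ because the multiplicity of $T(\omega)$ on $\Lambda$ is a single number independent of the sign. Your explicit localization via $E_{T(\omega)}(\Lambda)W_{\pm}=W_{\pm}E_{\boldsymbol\Omega}(\Lambda)$ and your care to avoid the circular appeal to Theorem~\ref{Ros_smooth:thm} merely spell out what the paper's terse proof leaves implicit.
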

 
 \begin{proof}
 According to Theorem \ref{omega} the spectral multiplicity 
 of the operator $\boldsymbol\Omega \1_{\Delta^{(\pm)}}$ on $\Lambda$ equals $ n^{(\pm)}$.  
 It follows from Theorem~\ref{compl} that the same statement is true
for the operator $T(\omega)$. This automatically implies that $n^{(+)} = n^{(-)}$.
  \end{proof}
  
The second corollary is a direct consequence of the asymptotic completeness \e{deltacompl:eq}.
  
\begin{corollary}\label{compl2}
For every $f\in{\mathbb H}^2$ asymptotic relation \e{evo:eq} $($with  $T= T(\omega))$  is satisfied where necessarily 
\[
\1_{\Delta^{(\pm)}}f_{\pm}=  W_\pm (T (\omega), \boldsymbol\Omega; \mathbb P)^* f.
\]
 For example,
we can set $f_{\pm}=  W_\pm (T (\omega), \boldsymbol\Omega; \mathbb P)^*f$.
\end{corollary}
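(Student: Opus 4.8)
The plan is to decode \e{evo:eq} entirely through the structural properties of the wave operators $W_\pm := W_\pm(T(\omega),\boldsymbol\Omega;\mathbb P)$ supplied by Theorem~\ref{compl}: they exist, are isometric on $\Ran\1_{\Delta^{(\pm)}}$ with $W_\pm^* W_\pm = \1_{\Delta^{(\pm)}}$ by \e{eq:isom}, and are complete, i.e. $W_\pm W_\pm^* = I$ on $\mathbb H^2$ by \e{deltacompl:eq}. Combining these two relations gives $W_\pm^* = \1_{\Delta^{(\pm)}}W_\pm^*$, so $W_\pm^*$ maps $\mathbb H^2$ into $\Ran\1_{\Delta^{(\pm)}}$; in particular any $f_\pm := W_\pm^* f$ satisfies $\1_{\Delta^{(\pm)}}f_\pm = f_\pm$ and, since $\Ran\1_{\Delta^{(\pm)}}\subset\Ran P^{(\rm ac)}_{\boldsymbol\Omega}$ by \e{eq:ac1}, also $P^{(\rm ac)}_{\boldsymbol\Omega}f_\pm = f_\pm$.

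For the \emph{existence} of $f_\pm$ I would simply take $f_\pm = W_\pm^* f$. Completeness gives $f = W_\pm W_\pm^* f = W_\pm f_\pm$, so the general property (f) of Section~\ref{ST:subsect} (see \e{eq:WO1x}) applies and yields $\|e^{-iTt}f - \mathbb P e^{-i\boldsymbol\Omega t}P^{(\rm ac)}_{\boldsymbol\Omega}f_\pm\|\to0$ as $t\to\pm\infty$. Because $P^{(\rm ac)}_{\boldsymbol\Omega}f_\pm = f_\pm$, the projection disappears and this is precisely \e{evo:eq}; moreover $\1_{\Delta^{(\pm)}}f_\pm = f_\pm = W_\pm^* f$, which confirms the stated identity for this choice.

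The real content is the word ``necessarily'': I must show that \emph{every} $f_\pm$ obeying \e{evo:eq} has $\1_{\Delta^{(\pm)}}f_\pm = W_\pm^* f$. Setting $g = f_\pm - W_\pm^* f$ and subtracting the two valid asymptotics, the unitarity of $e^{-iTt}$ gives $\|\mathbb P e^{-i\boldsymbol\Omega t}g\|\to0$ as $t\to\pm\infty$; since $W_\pm^* f\in\Ran\1_{\Delta^{(\pm)}}$, it suffices to prove $\1_{\Delta^{(\pm)}}g=0$. Decompose $g = g^{(+)}+g^{(-)}+g_0$ with $g^{(\pm)}=\1_{\Delta^{(\pm)}}g$ and $g_0 = \1_{\Delta_0}g$. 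By \e{eq:HT3} one has $\mathbb P e^{-i\boldsymbol\Omega t}g^{(\mp)}\to0$ as $t\to\pm\infty$, hence $\|\mathbb P e^{-i\boldsymbol\Omega t}(g^{(\pm)}+g_0)\|\to0$, while Theorem~\ref{XY1} gives the isometry limit $\|\mathbb P e^{-i\boldsymbol\Omega t}g^{(\pm)}\|\to\|g^{(\pm)}\|$.

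The main obstacle is the critical component $g_0$: since $\omega'=0$ on $\Delta_0$, the vector $\mathbb P e^{-i\boldsymbol\Omega t}g_0$ need not decay (for instance when $\omega$ is locally constant), so it cannot be discarded. I would circumvent this by showing that the cross term $\langle \mathbb P e^{-i\boldsymbol\Omega t}g^{(\pm)},\mathbb P e^{-i\boldsymbol\Omega t}g_0\rangle$ vanishes in the limit. Writing $\mathbb P=\mathbb P_+=I-\mathbb P_-$ and using that $e^{-i\boldsymbol\Omega t}$ is unitary, this inner product equals $\langle g^{(\pm)},g_0\rangle-\langle \mathbb P_- e^{-i\boldsymbol\Omega t}g^{(\pm)},\mathbb P_- e^{-i\boldsymbol\Omega t}g_0\rangle$; the first term is zero because $g^{(\pm)}$ and $g_0$ have disjoint supports, and the second tends to zero because $\mathbb P_- e^{-i\boldsymbol\Omega t}g^{(\pm)}\to0$ as $t\to\pm\infty$ by \e{eq:HT3-} (indeed $\pm\omega'<0$ on $\Delta^{(\pm)}$) while $\|\mathbb P_- e^{-i\boldsymbol\Omega t}g_0\|$ stays bounded. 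Once the cross term is gone, expanding $\|\mathbb P e^{-i\boldsymbol\Omega t}(g^{(\pm)}+g_0)\|^2$ as a sum of two nonnegative terms forces $\|\mathbb P e^{-i\boldsymbol\Omega t}g^{(\pm)}\|\to0$; comparing with the isometry limit $\|g^{(\pm)}\|$ gives $g^{(\pm)}=0$, that is $\1_{\Delta^{(\pm)}}f_\pm = W_\pm^* f$, as required.
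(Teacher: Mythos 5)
Your proposal is correct, and it is a genuine elaboration rather than a mere restatement: the paper disposes of this corollary in one sentence (``a direct consequence of the asymptotic completeness \eqref{deltacompl:eq}''), leaving both halves of the claim implicit. Your existence half is exactly the intended route: completeness $W_\pm W_\pm^*=I$ plus the isometry relation \eqref{eq:isom} give $W_\pm^*=\1_{\Delta^{(\pm)}}W_\pm^*$, and then property (f) of Sect.~\ref{ST:subsect} yields \eqref{evo:eq} with $f_\pm=W_\pm^*f$, the projection $P^{(\rm ac)}_{\boldsymbol\Omega}$ being absorbed via \eqref{eq:ac1}. The substantive added value is your uniqueness half. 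The naive argument would discard the critical component $g_0=\1_{\Delta_0}g$, which is illegitimate since $\mathbb P e^{-i\boldsymbol\Omega t}g_0$ need not decay (locally constant $\omega$); you correctly identify this obstacle and neutralize it with the cross-term identity $\langle\mathbb P u,\mathbb P v\rangle=\langle u,v\rangle-\langle\mathbb P_- u,\mathbb P_- v\rangle$ combined with the disjointness of supports and the decay \eqref{eq:HT3-} of $\mathbb P_-e^{-i\boldsymbol\Omega t}g^{(\pm)}$, after which the Pythagorean expansion forces $\|\mathbb P e^{-i\boldsymbol\Omega t}g^{(\pm)}\|\to0$, contradicting the isometry limit $\|g^{(\pm)}\|$ unless $g^{(\pm)}=0$. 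This is a clean and self-contained way to make ``necessarily'' precise, using only tools already in the paper. Two cosmetic remarks: \eqref{eq:HT3}, \eqref{eq:HT3-} and Theorem~\ref{XY1} are stated for single arcs, whereas $\Delta^{(\pm)}$ and $\Delta^{(\mp)}$ are open sets, so strictly you should pass to constituent arcs and use density together with the uniform bound $\|\mathbb P e^{-i\boldsymbol\Omega t}\|\le1$ --- the same implicit step the paper takes in deriving Theorem~\ref{deltapm:thm}; and the isometry limit $\|\mathbb P e^{-i\boldsymbol\Omega t}g^{(\pm)}\|\to\|g^{(\pm)}\|$ can be quoted directly from \eqref{eq:isom} (valid on all of $\Ran\1_{\Delta^{(\pm)}}$ under the hypotheses of Theorem~\ref{compl}), or obtained even more simply from $\|\mathbb P_+u\|^2=\|u\|^2-\|\mathbb P_-u\|^2$ and \eqref{eq:HT3-}, avoiding the arc-restricted Theorem~\ref{XY1} altogether.
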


Theorem~\ref{compl} and its corollaries conclude our construction of 
scattering theory for Toeplitz operators $T(\omega)$ 
with smooth symbols $\omega$. Since $\sigma (T(\omega)) =\sigma (\boldsymbol\Omega)$,
 the spectra of 
such Toeplitz  operators are thick (see Definition~\ref{class}). 
Note also that the scattering theory approach presented 
here gives (see Corollary \ref{compl1})   
an independent proof of Theorem \ref{Ros_smooth:thm} for smooth symbols.

If the symbol $\omega$ has jump discontinuities, then the equality \eqref{deltacompl:eq} is no longer true. 
To ensure the asymptotic completeness in this case, we need to take into account the 
 wave operators produced by   jumps of $\omega$. Such wave operators will be constructed in the next section.

%%%%%%%%%%%%%%%
 \section{Jump discontinuities. A model operator}\label{jumps:sect}
 
 %%%%%%%%%%%%%%%%%%%%%%
    
 \subsection{A model singularity}\label{model:subsect}
 As  a model operator, we choose the Toeplitz operator 
 $T_{\rm s}=T(\1_{(\z_{1}, \z_{2})})$ whose symbol $\1_{(\z_{1}, \z_{2})}$ 
 has two jumps at the points $\z_{1}$ and $\z_{2}$. 
The unitary operator $\Phi_{s}=\Phi(\1_{(\z_{1}, \z_{2})}): {\mathbb H}^{2}\to L^2 (0,1)$, 
diagonalizing $T_{\rm s}$, 
is given by formula \e{eq:Ros8} with 
$\varphi(z; \lambda) = \varphi_{s}(z;\lambda), z\in \mathbb D,$ 
defined in \e{eq:RS}. 
We suppose that $ \tilde{f} = \Phi_{s}f \in C_{0}^\infty (0,1)$ so that also 
 $ \hat{f}:= \varkappa \tilde{f}\in C_{0}^\infty (0,1)$ 
 where $\varkappa$ is given by \e{eq:RS2}. 
The integral \eqref{eq:Ros7} takes the form
\begin{align} 
(e^{-iT_{\rm s} t} f)(z)=  
(1-z / \z_1)^{-1/2} 
(1-z/\z_{2})^{-1/2}F(z, t)
\label{eq:F1}\end{align} 
where
\begin{align} 
F(z, t) =  \int\limits_0^1
(1-z / \z_1)^{-i\sigma (\lambda)} 
(1-z/\z_{2})^{i\sigma (\lambda)}
\hat {f}(\lambda) 
 e^{-i\lambda t} d\lambda
\label{sing:eq}\end{align} 
with the real-valued $\sigma(\lambda)$ defined by \eqref{eq:RS1}.
 
We  are interested in the behaviour of the function \e{eq:F1} as $|t|\to\infty$.  It is natural to expect that neighborhoods of the points $\z_{1}$ and $\z_{2}$ give the main contributions to the asymptotics of \e{eq:F1}. To see this, we have to estimate the integral in \eqref{sing:eq}. 
Note 
%previously 
that $| \arg (1-z/\z_{j})|\leq\pi/2$ and hence
\begin{equation}
 \big|  (1-z/\z_{j})^{i (\pm  1)^j \sigma(\lambda)} \big| 
 = \exp \big(  {(\mp  1)^j \sigma(\lambda)} \arg(1-z/\z_{j})\big)
 \leq e^{\pi |\sigma (\lambda)| /2}.
\label{eq:arg}\end{equation}

  \begin{lemma} \label{RS1}  
  Suppose that $\tilde f\in C^\infty_0(0, 1)$. Then 
for all $p\geq 0$, we have the estimate
\begin{align}
|(e^{-iT_{\rm s} t} f)(z)| \leq & C_p |1-z / \z_1|^{-1/2 } |1-z / \z_{2}|^{-1/2 }
\nonumber\\
& \times \bigg(1+ \big|\ln |1-z / \z_1|\big| + \big|\ln |1-z / \z_2|\big|\bigg)^p
|t|^{-p}, 
\label{eq:FF}\end{align} 
with a constant $C_{p}$ independent of $t\not =0$ and $z\in \mathbb D$. 
In particular, if $|z-\z_1|\ge c>0$ and $|z-\z_2|\ge c>0$, then 
\begin{align*} 
|(e^{-iT_{\rm s} t} f)(z)| \leq  C_p |t|^{-p},\q \forall p\geq 0.
%\label{eq:FT1}
\end{align*}
  \end{lemma}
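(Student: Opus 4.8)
The plan is to exploit the explicit representation \eqref{eq:F1}, which gives $|(e^{-iT_{\rm s}t}f)(z)| = |1-z/\z_1|^{-1/2}|1-z/\z_2|^{-1/2}\,|F(z,t)|$, so that \eqref{eq:FF} reduces to the single bound $|F(z,t)|\le C_p\bigl(1 + \bigl|\ln|1-z/\z_1|\bigr| + \bigl|\ln|1-z/\z_2|\bigr|\bigr)^p|t|^{-p}$ for the oscillatory integral \eqref{sing:eq}. The decay in $t$ is to be produced by repeated integration by parts in $\lambda$, using the oscillating factor $e^{-i\lambda t}$; the singular prefactor $|1-z/\z_j|^{-1/2}$ is separated off in \eqref{eq:F1} and plays no role in this procedure.

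First I would write the integrand of \eqref{sing:eq} as $g(z,\lambda)e^{-i\lambda t}$ with $g(z,\lambda) = (1-z/\z_1)^{-i\sigma(\lambda)}(1-z/\z_2)^{i\sigma(\lambda)}\hat f(\lambda)$, and integrate by parts $p$ times. Since $\hat f\in C^\infty_0(0,1)$ all boundary terms vanish and each step produces a factor $(it)^{-1}$, so that $F(z,t)=(it)^{-p}\int_0^1\partial_\lambda^p g(z,\lambda)\,e^{-i\lambda t}\,d\lambda$. It then remains to bound $\partial_\lambda^p g$ uniformly in $z\in\mathbb D$ on the compact support of $\hat f$.

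The key step is the derivative estimate. Differentiating $(1-z/\z_j)^{\mp i\sigma(\lambda)}$ in $\lambda$ brings down a factor $\mp i\sigma'(\lambda)\ln(1-z/\z_j)$, with the principal branch of the logarithm, which is well defined because $|\arg(1-z/\z_j)|\le\pi/2$. By the Leibniz rule, $\partial_\lambda^p g$ is a finite sum of terms equal to $(1-z/\z_1)^{-i\sigma}(1-z/\z_2)^{i\sigma}$ times products of derivatives of $\sigma$ and of $\hat f$ and of powers of $\ln(1-z/\z_1)$ and $\ln(1-z/\z_2)$ of total degree at most $p$. Three facts make this bounded: (i) from \eqref{eq:RS1}, $\sigma$ and all its derivatives are bounded on the support of $\hat f$, which is a compact subset of $(0,1)$; (ii) the modulus of the two power factors is controlled by \eqref{eq:arg} uniformly in $z$; and (iii) $|\ln(1-z/\z_j)|\le\bigl|\ln|1-z/\z_j|\bigr|+\pi/2\le C\bigl(1+\bigl|\ln|1-z/\z_j|\bigr|\bigr)$, again by the bound $|\arg(1-z/\z_j)|\le\pi/2$. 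Collecting these yields $|\partial_\lambda^p g(z,\lambda)|\le C_p\bigl(1+\bigl|\ln|1-z/\z_1|\bigr|+\bigl|\ln|1-z/\z_2|\bigr|\bigr)^p$ on the support of $\hat f$, and integrating in $\lambda$ gives the desired estimate for $F$, whence \eqref{eq:FF}. The final assertion is then immediate: if $|z-\z_1|\ge c$ and $|z-\z_2|\ge c$, then $|1-z/\z_j|$ is bounded above and below, so both the prefactor and the logarithmic factor in \eqref{eq:FF} are bounded.

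Since the mechanism is a clean integration by parts with no stationary points, there is no genuine analytic obstacle; the only point requiring care is the bookkeeping of the logarithmic factors --- specifically, confirming that differentiation of the $\lambda$-dependent exponents never produces more than $p$ logarithmic factors, and that the complex logarithm may be replaced by the real one at the cost of the harmless additive constant $\pi/2$ supplied by the argument bound $|\arg(1-z/\z_j)|\le\pi/2$.
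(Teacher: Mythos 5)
Your proof is correct and follows essentially the same route as the paper: after reducing via \eqref{eq:F1} to a bound on $F(z,t)$, you integrate by parts $p$ times in \eqref{sing:eq} and bound the resulting $p$-th derivative of the amplitude by $C_p\bigl(1+\bigl|\ln|1-z/\z_1|\bigr|+\bigl|\ln|1-z/\z_2|\bigr|\bigr)^p$ using the argument bound \eqref{eq:arg}, exactly as the paper does. Your additional bookkeeping --- the boundedness of $\sigma$ and its derivatives on the compact support of $\hat f$, the Leibniz-rule count showing at most $p$ logarithmic factors, and the replacement of the complex logarithm by the real one at the cost of the additive constant $\pi/2$ --- merely makes explicit details that the paper's proof leaves implicit.
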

  
\begin{proof} 
Using that
$\hat f  = \varkappa \tilde f \in C^\infty_0(0, 1)$ and integrating in \eqref{sing:eq} 
by parts $p$ times,  we get
\begin{align*}
F(z, t) = (it)^{-p} \int\limits_0^1   
e^{-i\lambda t} 
\frac{d^p}{d\lambda^p} 
\bigg((1-z / \z_1)^{-i\sigma (\lambda)} 
(1-z/\z_{2})^{i\sigma (\lambda)}
\hat {f}(\lambda)\bigg) d\lambda,
\end{align*}
Taking into account \eqref{eq:arg}, we see that the derivative under the integral sign does not exceed 
\begin{align*}
\tilde C_p\bigg(1 +  \big|\ln |1-z / \z_1|\big| 
+ \big|\ln |1-z / \z_2|\big|\bigg)^p.
\end{align*}
In view of \e{eq:F1},
this leads to the required bound. 
\end{proof}

This lemma shows that  $(e^{-iT_s t} f)(z)$ ``lives" in neighborhoods of the points $\z_{1}$ and $\z_2$ as $|t|\to\infty$. This result 
can be made more precise if one takes into account the  dependence on the sign of $t$ as $t\to\pm\infty$. 

\begin{lemma} %\label{RS2} 
Let  $j=1$, $k=2$ for $t>0$ and  $j=2$, $k=1$  for $t< 0$. 
Assume that $|z-\z_j| \leq c_{j}<1$ and $|z-\z_k| \geq c_k >0$. Then for 
all   $p\geq 0$, we have the estimates
\begin{equation}
|(e^{-iT_{\rm s} t} f)(z)| \leq C_p |1-z / \z_j |^{-1/2 }  \big(   \big|\ln |1-z / \z_j|\big|+ 
|t| \big)^{-p},
\label{eq:RS6}
\end{equation}
with a constant $C_p$ independent of $t\not = 0$ and $z\in \mathbb D$.
%  In particular, estimate \eqref{eq:FT1} remains true if $|z-\z_2| \geq c  >0$ for $t>0$ and if $|z-\z_1| \geq c  >0$ for $t <0$.
\end{lemma}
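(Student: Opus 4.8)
The plan is to refine the non-stationary phase analysis behind Lemma~\ref{RS1}, but now tracking the sign of $t$ so as to see that only the singularity at $\z_j$ survives. Throughout set $r_i=|1-z/\z_i|$ and $\theta_i=\arg(1-z/\z_i)$, so that $|\theta_i|\le\pi/2$ (cf. \eqref{eq:arg}) and, by hypothesis, $r_j\le c_j<1$ while $c_k\le r_k\le 2$. Since $|1-z/\z_k|^{-1/2}=r_k^{-1/2}\le c_k^{-1/2}$ is bounded, formula \eqref{eq:F1} shows that it suffices to prove
\begin{equation*}
|F(z,t)|\le C_p\big(|\ln r_j|+|t|\big)^{-p},\qquad p\ge 0 ,
\end{equation*}
for $t>0$ (whence $j=1$), the case $t<0$ being symmetric.

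Substituting $1-z/\z_i=r_ie^{i\theta_i}$ into \eqref{sing:eq} I would write $F(z,t)=\int_0^1 e^{i\Psi(\lambda)}g(\lambda)\,d\lambda$ with real phase and amplitude
\begin{equation*}
\Psi(\lambda)=\sigma(\lambda)(\ln r_2-\ln r_1)-\lambda t,\qquad g(\lambda)=\hat f(\lambda)\,e^{\sigma(\lambda)(\theta_1-\theta_2)} .
\end{equation*}
Because $|\theta_1-\theta_2|\le\pi$ and $\sigma$ together with all its derivatives are bounded on the compact set $\operatorname{supp}\hat f\subset(0,1)$, the amplitude satisfies $g\in C_0^\infty(0,1)$ with $|g^{(b)}(\lambda)|\le C_b$ uniformly in $z$. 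The decisive point is a lower bound for $\Psi'$. From \eqref{eq:RS1} one has $\sigma'(\lambda)=-(2\pi\lambda(1-\lambda))^{-1}<0$, with $0<c_\sigma\le|\sigma'(\lambda)|\le C_\sigma$ on $\operatorname{supp}\hat f$. Hence for $t>0$, using $\ln r_2-\ln r_1=\ln r_2+|\ln r_1|$,
\begin{equation*}
-\Psi'(\lambda)=|\sigma'(\lambda)|\,|\ln r_1|+|\sigma'(\lambda)|\ln r_2+t\ge c_\sigma|\ln r_1|+t-C_0 ,
\end{equation*}
where $C_0$ is a uniform bound for $|\sigma'(\lambda)\ln r_2|$ (recall $c_k\le r_k\le 2$). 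Thus $|\Psi'(\lambda)|\ge c(|\ln r_j|+|t|)-C_0$ on $\operatorname{supp}\hat f$, with $c=\min(c_\sigma,1)$.

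This is exactly where the sign hypothesis on $t$ enters: the spectator singularity $\z_k$ contributes only the bounded quantity $\ln r_k$, so that $\ln r_2-\ln r_1=\pm|\ln r_j|+O(1)$, and the matching sign of $t$ prevents $\Psi'$ from vanishing, producing the additive lower bound above in place of the product bound of Lemma~\ref{RS1}. For the higher derivatives, $\Psi^{(a)}(\lambda)=\sigma^{(a)}(\lambda)(\ln r_2-\ln r_1)$ for $a\ge2$, so $|\Psi^{(a)}(\lambda)|\le C_a(|\ln r_j|+1)\le C_a'(|\ln r_j|+|t|)$, the last step using $|\ln r_j|\ge|\ln c_j|>0$.

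Finally I would run $p$ integrations by parts with the operator $u\mapsto i\,(u/\Psi')'$, whose $p$-fold iterate applied to $g$ is a finite sum of terms $\Psi^{(a_1)}\cdots\Psi^{(a_l)}g^{(b)}/(\Psi')^{p+l}$ with $a_i\ge2$. Split into two regimes. If $c(|\ln r_j|+|t|)\ge 2C_0$, then $|\Psi'|\ge\tfrac{c}{2}(|\ln r_j|+|t|)=:\tfrac{c}{2}\rho$; since each factor obeys $|\Psi^{(a_i)}|\le C\rho$ and $|g^{(b)}|\le C_b$, every term is $\le C\rho^{-p}$, and integrating over $(0,1)$ gives $|F(z,t)|\le C_p(|\ln r_j|+|t|)^{-p}$. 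If instead $c(|\ln r_j|+|t|)<2C_0$, then $|\ln r_j|+|t|$ is bounded, so $(|\ln r_j|+|t|)^{-p}$ is bounded below and the trivial estimate $|F(z,t)|\le\|g\|_{L^1}\le C$ already suffices. The case $t<0$, $j=2$, $k=1$ follows verbatim after interchanging $\z_1$ and $\z_2$, which turns $\Psi'$ into $|\sigma'|(|\ln r_2|+\ln r_1)+|t|$ and yields $|\Psi'|\ge c(|\ln r_2|+|t|)-C_0'$. The main obstacle is precisely this sign-sensitive lower bound on $\Psi'$; once the additive estimate is established, the stationary-phase bookkeeping is routine.
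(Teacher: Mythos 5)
Your proof is correct and takes essentially the same route as the paper: both arguments bound $F(z,t)$ by repeated integration by parts in $\lambda$, using the sign-matched lower bound $\bigl|\Psi'(\lambda)\bigr|\gtrsim \bigl|\ln|1-z/\z_j|\bigr|+|t|$ on the phase derivative, which is exactly the paper's estimate on the denominator $t+\sigma'(\lambda)\ln(1-z/\z_1)$ in \eqref{eq:RS8}. The only cosmetic difference is that you put $\ln|1-z/\z_k|$ into a purely real phase and compensate with the two-regime split (absorbing the $O(1)$ spectator term $C_0$), whereas the paper keeps the factor $(1-z/\z_2)^{i\sigma(\lambda)}$ in the amplitude, so its lower bound $|t+\sigma'(\lambda)\ln(1-z/\z_1)|\ge c\bigl(-\ln|1-z/\z_1|+t\bigr)$ holds exactly, without an additive error term.
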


   \begin{proof} 
Consider, for example, the case $j=1$,  $k=2$, $t>0$. 
The integral \eqref{sing:eq} can be rewritten as 
\begin{align*}
F(z, t) = 
\int_0^1 e^{-i\lambda t
-i \sigma (\lambda) \ln  (1-z / \z_1)} (1-z / \z_{2})^{ i \sigma (\lambda)}\hat{f}(\lambda) d\lambda .
%\label{eq:RS7}
\end{align*}
Integrating   by parts once, we see that
 \begin{align}
 F(z, t) = i  \int_0^1   e^{-i\lambda t
-i \sigma (\lambda) \ln  (1-z / \z_1)} 
\left( 
\frac{(1-z / \z_{2})^{ i \sigma (\lambda)}\hat{f}(\lambda)}
{t + \sigma' (\lambda) \ln  (1-z / \z_1)} 
 \right)' d\lambda,
\label{eq:RS8}
\end{align}
where $\sigma' (\lambda)= - (2\pi \lambda (1-\lambda))^{-1}
\leq  -2\pi^{-1}$. 
Since $|1-\z_1/z| \leq c_{1}<1$, we have 
\begin{align*}
\Re  \ln  (1-z / \z_1) =\ln  |1-z / \z_1| \leq  \ln c_1 <0,
\end{align*}
so that
\begin{align*}
|  t+ \sigma' (\lambda) \ln  (1-z / \z_1) | \geq c (-\ln  |1-z / \z_1| + t) .
\end{align*}
We also have the estimate $|\ln (1- z/\z_2)|\ge c_{2}>0$. 
  Using also 
\eqref{eq:arg} we conclude that 
the integral \e{eq:RS8} is bounded 
by $C(-\ln  |1-z / \z_1| + t)^{-1}$. Further integrations by parts show that
\[
|F(z, t)|\leq  C_p (-\ln  |1-z / \z_1| + t)^{-p}
\]
for all $p\geq 0$.
Substituting this estimate into \e{eq:F1}, we get \e{eq:RS6} for $j=1$.
\end{proof} 
        
\begin{corollary} \label{RS2c} 
Let  $j=1$  for $t>0$ and  $j=2$  for $t< 0$. Then
\begin{equation*}
\int_{(\z_{j}e^{-i\varepsilon}, 
\z_{j}e^{i\varepsilon})}|(e^{-iT_{\rm s} t} f)(\z)|^2 d{\bf m} (\z) \leq C_p  |t|^{-p}
%\label{eq:RS6c}
\end{equation*}
 for 
all   $p\geq 0$ if $\varepsilon$ is sufficiently small.
\end{corollary}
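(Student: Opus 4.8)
The plan is to deduce the corollary directly from the pointwise estimate \eqref{eq:RS6} of the preceding lemma, simply by integrating it over the arc. By the symmetry of the statement it suffices to treat $t>0$, for which $j=1$; the case $t<0$ (with $j=2$) is identical after interchanging the roles of $\z_1$ and $\z_2$ and replacing $t$ by $|t|$. First I would fix $\varepsilon$ so small that on the whole arc $(\z_1 e^{-i\varepsilon},\z_1 e^{i\varepsilon})$ the hypotheses of that lemma are met, namely $|\z-\z_1|\le c_1<1$ and $|\z-\z_2|\ge c_2>0$. This is possible precisely because $\z_1\neq\z_2$, so points sufficiently close to $\z_1$ stay uniformly away from $\z_2$.

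Next I would parametrize the arc by $\z=\z_1 e^{i\theta}$, $\theta\in(-\varepsilon,\varepsilon)$, so that $d{\bf m}(\z)=(2\pi)^{-1}d\theta$ and $|1-\z/\z_1|=|1-e^{i\theta}|=2|\sin(\theta/2)|$, which is comparable to $|\theta|$ for small $\theta$; correspondingly $\big|\ln|1-\z/\z_1|\big|$ is comparable to $\big|\ln|\theta|\big|$. Squaring \eqref{eq:RS6} and inserting these relations bounds the left-hand side of the asserted inequality by
\begin{equation*}
C_p\int_{-\varepsilon}^{\varepsilon}|\theta|^{-1}\big(\big|\ln|\theta|\big|+t\big)^{-2p}\,d\theta .
\end{equation*}
The substitution $u=\big|\ln|\theta|\big|$, under which $|\theta|^{-1}\,d\theta$ turns into $\mp du$, then reduces this to
\begin{equation*}
2C_p\int_{|\ln\varepsilon|}^{\infty}(u+t)^{-2p}\,du
=\frac{2C_p}{2p-1}\,(|\ln\varepsilon|+t)^{-(2p-1)}\le \frac{2C_p}{2p-1}\,t^{-(2p-1)},
\end{equation*}
an elementary convergent integral valid for $p>1/2$.

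The only point requiring attention — and what one might at first fear to be the obstacle — is the non-integrable factor $|1-\z/\z_1|^{-1}\asymp|\theta|^{-1}$ produced by squaring the power $-1/2$ in \eqref{eq:RS6}. This singularity is, however, exactly compensated by the logarithmic weight $\big(\big|\ln|\theta|\big|+t\big)^{-2p}$: near $\theta=0$ convergence holds precisely when $2p>1$, which is why the change of variables $u=\big|\ln|\theta|\big|$ is the natural move and yields a finite answer. Finally, since the exponent $p\ge 0$ in \eqref{eq:RS6} is at our disposal, given any prescribed $q\ge 0$ I would simply choose $p\ge (q+1)/2$, so that $2p-1\ge q$ and the bound becomes $\le C_q\,t^{-q}$, which is the claim.
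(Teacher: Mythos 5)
Your proposal is correct and follows exactly the route the paper intends: the corollary is stated without proof as an immediate consequence of integrating the square of the pointwise bound \eqref{eq:RS6} over a small arc around $\z_j$, with $\varepsilon$ chosen small enough that the lemma's hypotheses $|z-\z_j|\le c_j<1$, $|z-\z_k|\ge c_k>0$ hold there. Your substitution $u=\bigl|\ln|\theta|\bigr|$, showing that the non-integrable factor $|\theta|^{-1}$ is tamed by the weight $\bigl(\bigl|\ln|\theta|\bigr|+|t|\bigr)^{-2p}$ for $p>1/2$, together with the final adjustment of $p$ to reach an arbitrary power of $|t|^{-1}$, supplies precisely the computation the authors leave to the reader.
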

        
 Thus   the function $(e^{-iT_{\rm s} t} f)(z)$  
tends to concentrate near $\z_{2}$ (resp. $\z_1$) as $t\to +\infty$ (resp.
  $t\to -\infty$).

 %%%%%%%        
\subsection{Jump discontinuities}\label{jump:subsect}

%%%%%%%

Here we consider symbols $\omega (\z)$ with a jump 
discontinuity at some point $\z_{0}\in {\mathbb T}$. We suppose that both one-sided limits 
\[
\omega (\z_{0}\pm 0): =  \lim_{  \varepsilon\to\pm 0}  \omega (\z_{0}e^{i  \varepsilon})=\alpha_{\pm}
\]
exist,  $\alpha_+\not = \alpha_-$, and
\begin{equation}
 \omega (\z)- \alpha_{\pm} = O (| \ln |\z-\z_{0}||^{-\rho}) \q \text{ for some } \q \rho>3/2
\label{eq:Ju}\end{equation}
  as $\z\to \z_{0}\pm 0$. 
  %The cases $\gamma_{+}<\gamma_{-}$ and $\gamma_{+} >\gamma_{-}$ are studied separately.  
  Pick some $\varepsilon\in (0,2\pi)$ and   define 
  the symbol $\omega_0 = \omega_0^{(\varepsilon)}$ by
\begin{equation}
\left.
\begin{aligned}
\omega_0(\z)& = \alpha_{-} \q {\rm for}\q \z\in (e^{-i\varepsilon}\z_{0},\z_{0})
\\
\omega_0(\z)& = \alpha_{+}\q {\rm for}\q \z\not\in (e^{-i\varepsilon}\z_{0},\z_{0})   
\end{aligned}
\right\} \q{\rm if }\q \alpha_{+}< \alpha_{-},
\label{eq:Ju+}
\end{equation}
and
\begin{equation}
\left.
\begin{aligned}
\omega_0(\z)&=\alpha_{+} \q {\rm for}\q \z\in (\z_{0}, e^{i\varepsilon}\z_{0})
\\
\omega_0(\z)&= \alpha_{-}\q {\rm for}\q \z\not\in(\z_{0}, e^{i\varepsilon}\z_{0})    
\end{aligned}
\right\} \q{\rm if }\q \alpha_{+}> \alpha_{-}.
\label{eq:Ju-}
\end{equation}
Now we apply the results of Section~\ref{model:subsect} to the unitary 
group $\exp(-i T(\omega_0)t)$. 
Note that 
\begin{align}\label{prop+:eq}
e^{-i T(\omega_0) t}= e^{-i \alpha_+ t} 
\exp\Big(-i (\alpha_{-}- \alpha_{+}) T (\1_{(e^{-i\varepsilon}\z_{0},\z_{0})} )t\Big)
\q {\rm if} \q \alpha_+< \alpha_-, 
\end{align}
and 
\begin{align}\label{prop-:eq}
  e^{-i T(\omega_0) t}= e^{-i \alpha_- t} 
  \exp\Big(-i (\alpha_+ - \alpha_-) T (\1_{(\z_{0},e^{i\varepsilon}\z_{0})} )t \Big) 
  \q {\rm if} \q \alpha_+> \alpha_-.
\end{align}

\begin{theorem} \label{JUMP} 
Let $\omega =\bar{\omega}\in L^\infty ({\mathbb T})$, 
and let condition \e{eq:Ju} be satisfied for some numbers $\alpha_{\pm}\in{\mathbb R}$. 
Define the symbol  $\omega_0$  by the formula \e{eq:Ju+} or \e{eq:Ju-}. 
If $\alpha_\pm < \alpha_\mp$, then the wave operator $W_{\pm} (T(\omega), T(\omega_0))$ exists. 
\end{theorem}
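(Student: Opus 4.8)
The plan is to verify Cook's criterion (Theorem~\ref{Cook}) for the pair $A=T(\omega_{0})$, $B=T(\omega)$ with identification $J=I$. The operator $T(\omega_{0})$ is a self-adjoint Toeplitz operator with a non-constant real symbol, hence absolutely continuous, and the perturbation is again a Toeplitz operator, $B-A=T(\omega)-T(\omega_{0})=T(g)$ with $g:=\omega-\omega_{0}$. The crucial structural fact is that $\omega$ and $\omega_{0}$ have the \emph{same} one-sided limits $\alpha_{\pm}$ at $\z_{0}$, so $g$ is continuous at $\z_{0}$ with $g(\z_{0})=0$; moreover \eqref{eq:Ju} yields the quantitative bound $|g(\z)|\le C\,\big|\ln|1-\z/\z_{0}|\big|^{-\rho}$ as $\z\to\z_{0}$. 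Treating the case $\alpha_{+}<\alpha_{-}$ (the $W_{+}$ case), I use \eqref{prop+:eq} to write $e^{-iT(\omega_{0})t}=e^{-i\alpha_{+}t}e^{-iT_{\rm s}\tau}$ with $T_{\rm s}=T(\1_{(e^{-i\eps}\z_{0},\z_{0})})$ and $\tau=(\alpha_{-}-\alpha_{+})t\to+\infty$. As the dense set required by Cook's criterion I take all $f$ with $\Phi_{s}f\in C_{0}^{\infty}(0,1)$, so that the pointwise bounds of Section~\ref{model:subsect} apply to $u_{\tau}:=e^{-iT_{\rm s}\tau}f$. Since $\|T(g)e^{-iT(\omega_{0})t}f\|\le\|g\,u_{\tau}\|_{L^{2}(\mathbb T)}$, everything reduces to showing that $\|g\,u_{\tau}\|_{L^{2}}$ is integrable in $t$ over $(0,\infty)$.

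To estimate $\|g\,u_{\tau}\|_{L^{2}}^{2}=\int_{\mathbb T}|g|^{2}|u_{\tau}|^{2}\,d\mathbf{m}$ I split $\mathbb T$ into a small arc $A_{0}$ around $\z_{0}$, a small arc $A_{1}$ around $e^{-i\eps}\z_{0}$, and the remainder $A_{2}$. On $A_{2}$, where $u_{\tau}$ stays away from both jump points, Lemma~\ref{RS1} gives $|u_{\tau}|\le C_{p}\tau^{-p}$ uniformly, so this contribution is $O(\tau^{-p})$ for every $p$. On $A_{1}$ the function $g$ is merely bounded, but Corollary~\ref{RS2c} shows $\int_{A_{1}}|u_{\tau}|^{2}\,d\mathbf{m}\le C_{p}\tau^{-p}$ for $\tau>0$, so this contribution is also rapidly decaying. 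The genuine difficulty is the region $A_{0}$, where $u_{\tau}$ concentrates as $\tau\to+\infty$.

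On $A_{0}$, write $s=|1-\z/\z_{0}|$; Lemma~\ref{RS1} gives $|u_{\tau}(\z)|\le C_{p}s^{-1/2}(1+|\ln s|)^{p}\tau^{-p}$, while $|g(\z)|\le C|\ln s|^{-\rho}$. The rapid decay in $\tau$ is now offset by the logarithmic factor, so no single $p$ controls the whole arc; instead I split $A_{0}$ at the scale $s\sim e^{-\tau}$. For $s\le e^{-\tau}$ I use $p=0$, giving $|u_{\tau}|\le Cs^{-1/2}$, and for $s> e^{-\tau}$ I take $p$ large. In both regimes the substitution $r=-\ln s$ turns $s^{-1}ds$ into $dr$ and reduces the integral to powers of $r$ cut off at $r=\tau$; a direct computation yields
\[
\int_{A_{0}}|g|^{2}|u_{\tau}|^{2}\,d\mathbf{m}\le C\,\tau^{1-2\rho},
\qquad\text{hence}\qquad
\|g\,u_{\tau}\|_{L^{2}(A_{0})}\le C\,\tau^{1/2-\rho}.
\]
This algebraic rate, rather than the naive rapid decay, is the true behaviour in the concentration region.

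Collecting the three contributions, $\|g\,u_{\tau}\|_{L^{2}}\le C(1+\tau)^{1/2-\rho}$, and since $\tau$ is proportional to $t$, the Cook integral $\int_{0}^{\infty}\|T(g)e^{-iT(\omega_{0})t}f\|\,dt$ converges precisely when $1/2-\rho<-1$, i.e.\ when $\rho>3/2$; this is exactly the hypothesis in \eqref{eq:Ju}. The case $\alpha_{-}<\alpha_{+}$, giving $W_{-}$, is handled identically using \eqref{prop-:eq}, where now $\z_{0}$ plays the role of the endpoint near which $e^{-iT_{\rm s}\tau}$ concentrates as $\tau\to-\infty$. The main obstacle is thus the region $A_{0}$: one must see that the $\tau^{-p}$ decay of Lemma~\ref{RS1} is defeated by the logarithmic singularity of $u_{\tau}$ at the jump, and that the surviving rate $\tau^{1/2-\rho}$ is integrable exactly at the threshold $\rho>3/2$.
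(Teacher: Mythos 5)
Your proof is correct, and at the global level it is the paper's own argument: Cook's criterion (Theorem~\ref{Cook}) on the dense set $\{f:\Phi_s f\in C_0^\infty(0,1)\}$, the reduction via \eqref{prop+:eq}, \eqref{prop-:eq} to the model group $e^{-iT_{\rm s}\tau}$, and the same three-region splitting of $\mathbb T$, with Lemma~\ref{RS1} and Corollary~\ref{RS2c} killing the contributions away from $\z_0$ to all orders. The only divergence is in the critical arc around $\z_0$, and there your motivating claim --- that ``no single $p$ controls the whole arc'' --- is in fact false: the paper fixes one $p\in(1,\rho-1/2)$ and observes that the spatial factor $s^{-1/2}\,|\ln s|^{p-\rho}$ is then square integrable near $s=0$ (precisely because $p<\rho-1/2$ makes $\int_0 s^{-1}|\ln s|^{2p-2\rho}\,ds$ converge), which gives $\|(\omega-\omega_0)e^{-iT(\omega_{\rm s})t}f\|_{L^2(\delta(\z_0))}\le C_p\,t^{-p}$ with $p>1$, cf.\ \eqref{eq:FFc}, and this already makes the Cook integral \eqref{eq:JuC} converge. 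Your alternative treatment --- splitting at the scale $s\sim e^{-\tau}$, using $p=0$ below it and a fixed large $p>\rho-1/2$ above it --- is also valid (both regime computations check out under the substitution $r=-\ln s$, each contributing $O(\tau^{1-2\rho})$ to the squared norm) and yields the sharper endpoint rate $\tau^{1/2-\rho}$, whereas the paper's single-$p$ bound gives $\tau^{-p}$ for every $p<\rho-1/2$; both are integrable exactly when $\rho>3/2$, so the two routes are equivalent at the level of hypotheses, and yours buys only a marginally more precise decay estimate at the cost of the extra dyadic-type splitting. Your handling of the opposite sign via \eqref{prop-:eq}, with concentration at $\z_1=\z_0$ as $\tau\to-\infty$, is also correct.
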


\begin{proof} 
Consider, for example, the wave operator $W_+ (T(\omega), T(\omega_0))$ 
for the case $\alpha_+ < \alpha_-$ and set $\omega_{\rm s} = \1_{(e^{-i\varepsilon}\z_0, \z_0)}$. 
According to Theorem~\ref{Cook} it suffices to check that 
\begin{equation}
\int_{1}^{\infty} \| (\omega - \omega_0)e^{-i T(\omega_{\rm s})  t} f\| dt <\infty
\label{eq:JuC}
\end{equation}
for functions $f$ such that $\tilde{f}= \Phi( \omega_{\rm s})f 
\in C_{0}^\infty ( 0, 1)$. 
Let $\delta (\z_{0})\subset{\mathbb T}$, $\delta (  \z_0, \varepsilon)\subset{\mathbb T}$ 
be disjoint neighbourhoods of the points 
$\z_{0}$, $e^{-i\varepsilon}\z_{0}$ and
\[
\Sigma (\z_{0}, \varepsilon) ={\mathbb T}\setminus \big( \delta (\z_{0})\cup \delta (\z_{0},\varepsilon)\big).
\]
Lemma~\ref{RS1} 
  and Corollary~\ref{RS2c} 
show that
 \begin{equation}
\|e^{-i T( \omega_{\rm s})  t} f\|_{L^2 (\Sigma(\z_{0}, \varepsilon))} 
 + 
\|e^{-i T( \omega_{\rm s})t} f\|_{L^2 ( \delta (\z_0, \varepsilon))}
\leq C_p t^{-p},\q \forall p\geq 0.
\label{eq:FFa}\end{equation}
In a neighborhood of the point $ \z_{0}$, we use condition  \e{eq:Ju}  so that
 \[
 \omega (\z)-  \omega_{0} (\z) = O (| \ln |\z-\z_{0}||^{- \rho}),   \q \rho>3/2,
\]
and hence in view of \e{eq:FF}  
 \[
|(\omega(\z)-\omega_0(\z))(e^{-i T( \omega_{\rm s})  t} f)(\z)| \leq 
C_p |1-\z / \z_0|^{-1/2 }  \big|\ln |1-\z / \z_0|\big|^{- \rho+p} t^{-p}.
\]
 If $p < \rho - 1/2$, then the right-hand side is in $L^2(\delta(\z_0))$, and it follows that
 \begin{equation}
\| (\omega-\omega_{0}) e^{-i T( \omega_{\rm s})  t} f\|_{L^2 (\delta ( \z_{0}))}
\leq C_p t^{-p}. 
\label{eq:FFc}\end{equation}
Since $\rho >3/2$, we can choose $p>1$.
 Combining  estimates \e{eq:FFa}
 and \e{eq:FFc}, we see that the integral \e{eq:JuC} converges.
 \end{proof}

%The existence of $W_- (T(\omega), T(\omega_0))$ in the case $\gamma_+ > \gamma_-$, is proved in a similar way.

\begin{remark} \label{JUMPe} 
The symbol $\omega_0 = \omega_0^{(\varepsilon)}$ depends on $\varepsilon\in (0, 2\pi)$, and hence so does  
the wave operator 
$W_\pm  (T(\omega), T (\omega_0^{(\varepsilon)}))$.   
% under the assumption that $\gamma_\pm < \gamma_\mp$. 
However this dependence is trivial. Indeed, for two different positive 
$\varepsilon$ and $\nu$, we have
\begin{align*}
   e^{iT t}e^{- iT^{(\nu)}_0 t} 
   = \big(  e^{iT t}e^{- iT^{(\varepsilon)}_0  t} \big)  
   \big(  e^{iT^{(\varepsilon)}_0  t}  e^{- iT^{(\nu)}_0 t}\big)
\end{align*}
where we have set $T = T(\omega)$, $T^{(\varepsilon)}_0 = ~T (\omega_0^{(\varepsilon)})$, for short.
   By Theorem~\ref{JUMP}, the left-hand side and the first factor 
   on the right  converge, as $t\to \pm\infty$, to 
 $W_\pm(T, T^{(\nu)}_0)$ and 
 $W_\pm(T, T^{(\varepsilon)}_0)$, respectively. 
 Again by Theorem~\ref{JUMP}, the second factor on the right  converges to
the  operator 
\begin{equation*}
 W_\pm (T^{(\varepsilon)}_0, T^{(\nu)}_0) =: U_\pm  ( \varepsilon, \nu),
%\label{eq:Uu1}
\end{equation*}
 so that
\begin{equation}
W_\pm(T, T^{(\nu)}_0) = W_\pm(T, T^{(\varepsilon)}_0) U_\pm(\varepsilon, \nu).
\label{eq:Uu}
\end{equation}
Since $U_\pm(\varepsilon, \nu)^* =  U_\pm(\nu, \varepsilon)$ and  $U_\pm(\varepsilon, \nu)  U_\pm(\nu, \varepsilon)=I$, 
the   operators $U_\pm(\varepsilon, \nu)$  
  are  unitary. Note that, by definitions \eqref{prop+:eq} and \eqref{prop-:eq}, 
they do not depend on $\alpha_+$ or $\alpha_-$.
 \end{remark}

 %It is also easy to see that it is unitary. 
 %Indeed, let us pass to the limit 
 %$\varepsilon\to 0$ 
%$t\to\pm\infty$  
% in the identity
 %  \begin{equation*}
%\big(e^{iT^{(\nu)}_\pm t} e^{- iT^{(\varepsilon)}_\pm t}\big) 
%\big(e^{iT^{(\varepsilon)}_\pm t} e^{-iT^{(\nu)}_\pm t}\big)=I.
 % \end{equation*}
%This yields the relation   
% $  U_\pm(\varepsilon, \nu)^* U_\pm  ( \varepsilon, \nu)=I$.
%In the same way 
% we also obtain the identity 
%$U_\pm(\varepsilon, \nu) U_\pm(\varepsilon, \nu)^* = I$.

Although we omit the dependence of the symbol $\omega_0$ 
on $\varepsilon$, we always keep in mind the relation \e{eq:Uu}.

\subsection{Orthogonality of the channels} 

Let us show that the ranges of the wave operators constructed in Theorems~\ref{deltapm:thm} and \ref{JUMP} are orthogonal to each other.
% In this subsection we explicitly assume only that $\omega = \overline{\omega}\in L^\infty(\mathbb T)$.  
Consider first the wave operators corresponding to jumps of the symbol. Recall that, for an arc $(\z_{1}, \z_{2})\subset {\mathbb T}$,  the unitary operator $\Phi ( \1_{ (\z_{1}, \z_{2})}): {\mathbb H}^{2}\to L^2 (0,1)$ is defined by formulas   \e{eq:Ros3}, \e{eq:RS}.

% arcs $(\z_{1}, \z_{2})$, $(\z_{1}', \z_{2}')$ of $\mathbb T$.

\begin{theorem} \label{Orind}
   Let $(\z_{1}, \z_{2})\subset {\mathbb T}$, $(\z_{1}', \z_{2}')\subset {\mathbb T}$ where
     $\z_j \neq \z_k'$ for all $j,k=1,2$.  Suppose that, for some $\alpha, \alpha ' \in  {\mathbb R}$, the wave operators
   $W_\pm(T(\omega), T(\alpha\1_{ (\z_{1}, \z_{2})}))$ and    $W_\pm(T(\omega), T(\alpha '\1_{ (\z_{1}', \z_{2}')}))$
exist.  Then the  subspaces
   \[
\Ran   W_\pm  (T(\omega), T( \alpha\1_{ (\z_{1}, \z_{2})}))   \q \text{and} \q \Ran  W_\pm(T(\omega), T( \alpha '\1_{ (\z_{1}', \z_{2}')}))
   \]
   of ${\mathbb H}^{2}$ are orthogonal to each other.
\end{theorem}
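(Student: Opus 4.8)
The plan is to reduce the orthogonality of the ranges to the \emph{asymptotic disjointness} of the supports of the two model evolutions, which is already quantified by Lemma~\ref{RS1} and Corollary~\ref{RS2c}. Write $T_1 = T(\alpha\1_{(\z_1,\z_2)}) = \alpha\,T(\1_{(\z_1,\z_2)})$ and $T_2 = T(\alpha'\1_{(\z_1',\z_2')}) = \alpha'\,T(\1_{(\z_1',\z_2')})$; we may assume $\alpha,\alpha'\neq 0$, since otherwise the corresponding wave operator is $0$ (the zero operator has no absolutely continuous part) and the assertion is trivial. Both model operators are non-constant self-adjoint Toeplitz operators, hence purely absolutely continuous by Rosenblum's theorem, so $P^{(\rm ac)}_{T_1}=P^{(\rm ac)}_{T_2}=I$. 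Orthogonality of the ranges is equivalent to $\langle W_\pm(T(\omega),T_1)f_0,\,W_\pm(T(\omega),T_2)g_0\rangle=0$ for all $f_0,g_0\in\mathbb H^2$; since the wave operators are bounded, it suffices to check this for $f_0$ and $g_0$ in the dense sets on which $\Phi(\1_{(\z_1,\z_2)})f_0\in C_0^\infty(0,1)$ and $\Phi(\1_{(\z_1',\z_2')})g_0\in C_0^\infty(0,1)$, respectively, for these are exactly the vectors to which the explicit estimates of Section~\ref{model:subsect} apply.

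First I would strip off the operator $T=T(\omega)$. Writing both wave operators as strong limits and using that the inner product is continuous along pairs of strongly convergent sequences, together with the unitarity of $e^{iTt}$, one obtains
\[
\langle W_\pm(T,T_1)f_0,\,W_\pm(T,T_2)g_0\rangle
= \lim_{t\to\pm\infty}\langle e^{-iT_1 t}f_0,\,e^{-iT_2 t}g_0\rangle.
\]
It therefore remains to show that this last inner product tends to zero along the chosen direction of $t$.

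The key step is the localization of the model propagators. Since $e^{-iT_1 t}=e^{-iT(\1_{(\z_1,\z_2)})(\alpha t)}$ with $\alpha\neq0$, Corollary~\ref{RS2c} together with Lemma~\ref{RS1} shows that, for the chosen sign of $t$, the vector $e^{-iT_1 t}f_0$ concentrates in $L^2$-norm within an arbitrarily small neighbourhood of a single one of the endpoints $\z_1,\z_2$, the $L^2$-mass over the complement of that neighbourhood being $O(|t|^{-p})$ for every $p\ge0$; which endpoint occurs depends on $\sign\alpha$ and on the sign of $t$, but only membership in $\{\z_1,\z_2\}$ will matter. Likewise $e^{-iT_2 t}g_0$ concentrates near one of $\z_1',\z_2'$. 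By the hypothesis $\z_j\neq\z_k'$ these two concentration points are distinct, so I may fix disjoint neighbourhoods $U$ and $U'$ of them. Splitting $\mathbb T=U\cup U'\cup\big(\mathbb T\setminus(U\cup U')\big)$ and applying the Cauchy--Schwarz inequality on each piece, the contribution from $U$ is bounded by $\|f_0\|\,\|e^{-iT_2 t}g_0\|_{L^2(U)}$, that from $U'$ by $\|e^{-iT_1 t}f_0\|_{L^2(U')}\,\|g_0\|$, and that from the remainder by a product of two norms each tending to zero; all three vanish as $t\to\pm\infty$ by the localization estimates. Hence the inner product tends to zero, and the ranges are orthogonal.

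The main obstacle is purely the bookkeeping of the concentration: one must verify that for each sign ``$\pm$'' the two propagators genuinely concentrate at distinct points, and handle the rescaling $t\mapsto\alpha t$, which may reverse the direction of time but never affects the decay rate. All the analytic content---the $|t|^{-p}$ decay of the model evolution away from its concentration point---is already supplied by Lemma~\ref{RS1} and Corollary~\ref{RS2c}, so beyond this localization argument the proof is routine.
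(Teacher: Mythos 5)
Your proposal is correct and follows essentially the same route as the paper: reduce, via unitarity of $e^{iT(\omega)t}$ and density of the vectors with $C_0^\infty(0,1)$ images under the diagonalizing maps, to showing $\lim_{|t|\to\infty}(e^{-i\alpha T_{s}t}f,\, e^{-i\alpha' T'_{s}t}f')=0$, and then split $\mathbb T$ into three regions and apply the localization estimates of Section~\ref{model:subsect} with Cauchy--Schwarz. The only difference is cosmetic: the paper excises a neighbourhood of \emph{both} endpoints of each arc (rather than of the single concentration point), so that Lemma~\ref{RS1} alone suffices everywhere and the sign-of-$t$, sign-of-$\alpha$ bookkeeping via Corollary~\ref{RS2c} that your remainder region forces is avoided entirely.
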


\begin{proof} 
Denote $T_{s} = T\big(\1_{ (\z_{1}, \z_{2})}\big)$, $T'_{s} = T\big(\1_{ (\z_{1}', \z_{2}')}\big)$.
It suffices to check that
\begin{equation}
\lim_{|t|\to \infty} (e^{-i \alpha T_{s}t} f,  e^{-i \alpha' T'_{s} t} f')=0
\label{eq:ori}
\end{equation}
for all $f, f'\in {\mathbb H}^{2}$ such that 
$\Phi (\1_{ (\z_{1}, \z_{2})}) f \in C_{0}^{\infty} (0,1)$ 
and $ \Phi (\1_{ (\z_{1}', \z_{2}')}) f' \in C_{0}^{\infty} (0,1)$. 
Let $\delta\subset {\mathbb T}$ be a neighborhood of the set $\{\z_{1}, \z_{2}  \}$ such that $\dist\{\z_{j}', \delta\}>0$ 
 for both $j=1,2$. 
 Similarly,   let $\delta'\subset {\mathbb T}$   be a neighborhood of the set 
 $\{\z_{1}', \z_{2}'  \}$ such that 
 $\dist\{\z_{j} , \delta'\}>0$ for both $j=1,2$. We split the integral in \e{eq:ori} over ${\mathbb T}$ in three integrals: over
$\delta$, $\delta'$ and  
$\Sigma = {\mathbb T}\setminus (\delta\cup \delta')$, According to Lemma \ref{RS1}
$\| e^{-i \alpha T_{s} t} f\|_{L^{2}( \delta')} \to 0$, 
$\| e^{-i \alpha'  T_{s}' t} f' \|_{L^{2}( \delta)} \to 0$ and both factors 
$  \| e^{-i \alpha T_{s} t} f\|_{L^{2}( \Sigma)} \to 0$, 
$\| e^{-i \alpha'  T_{s}' t} f' \|_{L^{2}( \Sigma)} \to 0$ as $|t|\to \infty$.
This implies \eqref{eq:ori}. 
\end{proof} 
      
Next, we compare the wave operators 
for the pair $\boldsymbol\Omega$, $T(\omega)$ with those for the pair 
$T(\alpha\1_{(\z_1, \z_2)}), T(\omega)$.   

%
%constructed in Subsections~\ref{Cook:subsect} or 
%\ref{smooth:subsect}  and \ref{jump:subsect}. 
%The precise statement is the following.
      
       \begin{theorem} \label{Orth} 
          Let $(\z_{1}, \z_{2})\subset {\mathbb T}$ and $\alpha  \in  {\mathbb R}$.
Suppose that  the wave operators $W_\pm  (T(\omega), \boldsymbol\Omega ; {\mathbb P})$ and
$W_\pm(T(\omega), T(\alpha \1_{ (\z_{1}, \z_{2})}))$  
exist. 
Then the  subspaces
\begin{align*}
\Ran   W_\pm  (T(\omega), \boldsymbol\Omega ;{\mathbb P})   
\q \text{and} \q \Ran  W_\pm(T(\omega), T( \alpha \1_{ (\z_{1}, \z_{2})}))
\end{align*} 
of $\mathbb H^{2}$ are orthogonal to each other.
\end{theorem}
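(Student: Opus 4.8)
The plan is to follow the strategy of the proof of Theorem~\ref{Orind}, exploiting the contrast between the two evolutions: the model propagator $e^{-iT(\alpha\1_{(\z_1,\z_2)})t}$ concentrates near the jump points $\z_1,\z_2$ as $|t|\to\infty$ (Lemma~\ref{RS1}), whereas the propagator $e^{-i\boldsymbol\Omega t}$ of the multiplication operator preserves the support of every vector. If $\alpha=0$ the second operator is $0$, whose absolutely continuous subspace is trivial, so the corresponding wave operator vanishes and the claim is immediate; hence we may assume $\alpha\neq0$, and then $T(\alpha\1_{(\z_1,\z_2)})=\alpha\,T(\1_{(\z_1,\z_2)})$ is absolutely continuous.

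First I would convert the orthogonality into a limit of $L^2$-overlaps. Put $T_{\rm s}'=T(\alpha\1_{(\z_1,\z_2)})$, and for $f_0\in L^2(\mathbb T)$, $g_0\in\mathbb H^2$ set $f=W_\pm(T(\omega),\boldsymbol\Omega;\mathbb P)f_0$ and $g=W_\pm(T(\omega),T_{\rm s}')g_0$. Passing the strong limits into the inner product, using the unitarity of $e^{iT(\omega)t}$, and noting that $e^{-iT_{\rm s}'t}P^{\rm(ac)}_{T_{\rm s}'}g_0\in\mathbb H^2$ so that $(\mathbb P u,v)=(u,v)$ for such $v$, I obtain
\begin{equation*}
(f,g)=\lim_{t\to\pm\infty}\big(e^{-i\boldsymbol\Omega t}P^{\rm(ac)}_{\boldsymbol\Omega}f_0,\;e^{-iT_{\rm s}'t}P^{\rm(ac)}_{T_{\rm s}'}g_0\big)_{L^2(\mathbb T)}.
\end{equation*}
Since the wave operators are bounded, it suffices to show that this limit vanishes for $f_0,g_0$ ranging over dense sets.

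For the second channel I would take $g_0$ with $\Phi(\1_{(\z_1,\z_2)})g_0\in C_0^\infty(0,1)$, which are dense in $\mathbb H^2$ and satisfy $P^{\rm(ac)}_{T_{\rm s}'}g_0=g_0$. For the first channel I would use that $\mathcal H^{\rm(ac)}_{\boldsymbol\Omega}$ is invariant under multiplication by indicators: if $h\in\mathcal H^{\rm(ac)}_{\boldsymbol\Omega}$ and $K\subset\mathbb T$, then the spectral measure of $\1_K h$ is dominated by that of $h$, hence absolutely continuous, so $\1_K h\in\mathcal H^{\rm(ac)}_{\boldsymbol\Omega}$. Cutting $h$ off near $\z_1,\z_2$ therefore shows that the vectors $f_0\in\mathcal H^{\rm(ac)}_{\boldsymbol\Omega}$ with $\dist(\operatorname{supp}f_0,\{\z_1,\z_2\})>0$ are dense in $\mathcal H^{\rm(ac)}_{\boldsymbol\Omega}$, and $P^{\rm(ac)}_{\boldsymbol\Omega}f_0=f_0$ for these. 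Fix such an $f_0$ with $K:=\operatorname{supp}f_0$ bounded away from $\{\z_1,\z_2\}$. Because $\boldsymbol\Omega$ is multiplication by $\omega$, we have $|e^{-i\boldsymbol\Omega t}f_0|=|f_0|$ pointwise, so $e^{-i\boldsymbol\Omega t}f_0$ remains supported in $K$; Cauchy--Schwarz then gives
\begin{equation*}
\big|\big(e^{-i\boldsymbol\Omega t}f_0,\;e^{-iT_{\rm s}'t}g_0\big)_{L^2(\mathbb T)}\big|\le\|f_0\|_{L^2(\mathbb T)}\,\|e^{-iT_{\rm s}'t}g_0\|_{L^2(K)}.
\end{equation*}
As $e^{-iT_{\rm s}'t}=e^{-iT(\1_{(\z_1,\z_2)})\alpha t}$ and $K$ is bounded away from both $\z_1$ and $\z_2$, the ``in particular'' estimate of Lemma~\ref{RS1} (with $|t|$ replaced by $|\alpha t|$) yields $\|e^{-iT_{\rm s}'t}g_0\|_{L^2(K)}\le C_p|t|^{-p}\to0$. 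Hence $(f,g)=0$ on the dense sets, and the two ranges are orthogonal.

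I do not anticipate a serious obstacle: once $(f,g)$ is rewritten as a limit of $L^2$-overlaps, orthogonality is forced by the disjointness of the regions where the two evolutions live, the model propagator escaping to $\{\z_1,\z_2\}$ while the multiplication propagator stays frozen on $K$. The only point requiring a little care is the density input for the multiplication channel, namely the invariance of $\mathcal H^{\rm(ac)}_{\boldsymbol\Omega}$ under indicator cut-offs, which lets me localize $f_0$ away from the jump points without leaving the absolutely continuous subspace.
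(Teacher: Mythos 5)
Your proof is correct, and while its skeleton matches the paper's (reduce orthogonality to the vanishing of a cross-term $\lim_{|t|\to\infty}(\mathbb P e^{-i\boldsymbol\Omega t}f_0, e^{-iT_{\rm s}'t}g_0)$ on dense sets, localize away from $\{\z_1,\z_2\}$, and invoke the decay estimate of Lemma~\ref{RS1}), you handle the projection $\mathbb P$ by a genuinely different and simpler mechanism. The paper keeps $\mathbb P$ in place and therefore must decompose $f$ with a \emph{smooth} cutoff $\phi_\epsilon = 1-\psi_\epsilon$, using that the commutator $[\mathbb P,\phi_\epsilon]$ is trace class by \eqref{eq:comm} (ultimately Peller's Theorem~\ref{PE}) together with property (g) of Section~\ref{ST:subsect} to conclude $\|[\mathbb P,\phi_\epsilon]e^{-i\boldsymbol\Omega t}f\|\to 0$, after which the cutoff can be moved onto the model evolution where Lemma~\ref{RS1} applies. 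You instead observe that $e^{-iT_{\rm s}'t}g_0\in\mathbb H^2$, so the self-adjoint projection $\mathbb P$ can simply be absorbed: $(\mathbb P u, v)=(u,v)$ for $v\in\mathbb H^2$. This eliminates $\mathbb P$ before any localization and lets you use a \emph{sharp} indicator cutoff on the initial datum $f_0$ rather than a smooth cutoff on the evolved state; the price is the small extra lemma that indicator cutoffs preserve $\mathcal H^{\rm(ac)}_{\boldsymbol\Omega}$, which you prove correctly by spectral-measure domination (valid since $\1_K$ commutes with $E_{\boldsymbol\Omega}(\Lambda)=\1_{\omega^{-1}(\Lambda)}$). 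Your route thus avoids the compactness/trace-class input entirely and is more elementary; the paper's commutator technique is the more robust tool (it is the same device used in its proof of Theorem~\ref{Orth} and would survive situations where the second evolution does not stay inside $\Ran\mathbb P$). Two further points in your favour: you explicitly dispose of the degenerate case $\alpha=0$, where $P^{\rm(ac)}_{T(0)}=0$ forces the second wave operator to vanish, which the paper leaves implicit; and your application of Lemma~\ref{RS1} on the boundary arc $K$ (with $|t|$ replaced by $|\alpha t|$) is the same use of boundary values already made in Corollary~\ref{RS2c} and in the proof of Theorem~\ref{Orind}, so it is legitimate.
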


   \begin{proof}  
   It suffices to check that 
    \begin{equation*}
   \lim_{|t|\to \infty} ({\mathbb P}e^{-i \boldsymbol\Omega t} f, 
   e^{-i T_{s} t} f_{0})=0,\q T_{s} = T(\alpha \1_{ (\z_{1}, \z_{2})}), 
% \label{eq:ort}
 \end{equation*}
 for all $f\in \mathcal H^{\rm(ac)}_{\boldsymbol\Omega}$ and all 
 $f_{0}\in {\mathbb H}^{2}$ such that 
 $\Phi  (\1_{ (\z_{1}, \z_{2})})f_{0}\in C_{0}^{\infty} (0,1)$.
 Let $\psi_{\epsilon}\in C^{\infty} ({\mathbb T})$ be a real-valued function such that 
 $\psi_{\epsilon}(\z)=1$ in $\epsilon$-neighborhoods 
 of the points $\z_{1}$ and $\z_{2}$ and $\psi_{\epsilon}(\z)=0$ 
 away from $2\epsilon$-neighborhoods of these points. Put 
  $\phi_{\epsilon}(\z)=1-\psi_{\epsilon}(\z)$. Clearly,
\[
|({\mathbb P}e^{-i \boldsymbol\Omega t} \psi_{\epsilon} f, 
 e^{-i T_{s} t} f_{0})|\leq \| \psi_{\epsilon} f\| \| f_{0}\|
\]
tends to $0$ as $\epsilon\to 0$  uniformly in $t$. 

Thus,  we only have to show that
\begin{equation*}
\lim_{|t|\to \infty} ({\mathbb P} 
e^{-i \boldsymbol\Omega t}\phi_{\epsilon} f,   e^{-i T_{s} t} f_{0})=0, \q {\mathbb P} ={\mathbb P}_{+},
% \label{eq:ort1}
 \end{equation*}
 for a fixed $\epsilon> 0$. Recall that according to
  \e{eq:comm} the commutator
   \[
  [\mathbb P, \phi_{\epsilon}]={\mathbb P}_{+}\phi_{\epsilon}-\phi_{\epsilon} {\mathbb P}_{+}
  ={\mathbb P}_{+}\phi_{\epsilon} {\mathbb P}_-  - {\mathbb P}_- \phi_{\epsilon} {\mathbb P}_{+}
  \] 
 is compact  (actually, it belongs to the trace class).
 Thus property (g), see Sect.~\ref{ST:subsect}, implies that
\begin{align*}
\lim_{|t|\to \infty}  \| \, [\mathbb P, \phi_{\epsilon}] e^{-i\boldsymbol\Omega t} f\|= 0,  
\end{align*} 
 and hence it suffices to check that
\begin{equation}
\lim_{|t|\to \infty} ({\mathbb P}e^{-i \boldsymbol\Omega t}  f, \phi_{\epsilon}  
e^{-i T_{s} t}f_{0})=0.
\label{eq:ort2}
\end{equation}
According to Lemma~\ref{RS1} we have the estimate
\[
\|\phi_{\epsilon}  e^{-i T_{s} t}  f_{0}\| \leq C_p |t|^{-p},\q \forall p >0,
\]
whence \e{eq:ort2} follows.
\end{proof}

\section{Putting things together. Main result}%\label{complete:sect}

Now we are in a position to develop scattering theory for
Toeplitz operators $T=T(\omega)$ with piecewise continuous symbols $\omega$. 
From now on, we always assume that 
Condition~\ref{SM:cond} is satisfied with some finite set 
 ${\sf S} = \{\eta_1, \eta_2, \dots\}$.

\subsection{The existence of wave operators} %\label{exist:subsect}

Here, we state several immediate consequences of the results established 
in Sections~\ref{mult:sect} and \ref{jumps:sect}.

The first theorem is a  special case of parts (i)  and (ii)
of Theorem~\ref{deltapm:thm}  
with the set $\Delta = \mathbb T\setminus {\sf S}$.

\begin{theorem} \label{INCL2}
 Suppose that Condition \ref{SM:cond} is satisfied 
 and that $\omega\in{\mathbb W}^{2,1}_{\rm loc}({\mathbb T}\setminus {\sf S})$. 
Let the open sets $\Delta^{(\pm)}\subset{\mathbb T}\setminus {\sf S}$ 
be as defined in \eqref{eq:dpm}. 
Then the wave operators  $W_{\pm}(T , \boldsymbol\Omega; {\mathbb P})$ 
exist and  are isometric on the subspaces $\Ran \1_{\Delta^{(\pm)}}$. 
Moreover, they satisfy relations \e{deltapm:eq}. 
\end{theorem}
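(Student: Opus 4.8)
The plan is to recognize this statement as a direct specialization of Theorem~\ref{deltapm:thm} and to verify that its hypotheses hold for the choice $\Delta = \mathbb{T}\setminus {\sf S}$. Theorem~\ref{deltapm:thm} requires three things of the symbol and of the set $\Delta$: that $\omega = \bar\omega \in L^\infty(\mathbb{T})$, that $\omega \in \mathbb{W}^{2,1}_{\rm loc}(\Delta)$, and that $\Delta$ be an open set of full measure. The first is exactly part (i) of Condition~\ref{SM:cond}, and the second is the explicit assumption of the present theorem. For the third, I would note that by part (ii) of Condition~\ref{SM:cond} the set ${\sf S} = \{\eta_k\}$ is finite, so its complement $\Delta = \mathbb{T}\setminus {\sf S}$ is open; and since any finite subset of $\mathbb{T}$ carries zero Lebesgue measure, $\Delta$ has full measure. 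The sets $\Delta^{(\pm)}$ appearing in the statement are precisely those produced by \eqref{eq:dpm} from this $\Delta$, so the notation is consistent with that of Theorem~\ref{deltapm:thm}.

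Once the hypotheses are in place, parts (i) and (ii) of Theorem~\ref{deltapm:thm} apply verbatim. Part (i) yields the existence of $W_{\pm}(T, \boldsymbol\Omega; \mathbb{P})$ (and, incidentally, the intertwining property, which is not needed here), while part (ii) supplies both the isometry of these operators on $\Ran \1_{\Delta^{(\pm)}}$ and the relations \eqref{deltapm:eq}. There is essentially no obstacle to overcome: the only point that demands any verification is that $\mathbb{T}\setminus {\sf S}$ is open and of full measure, and this is immediate from the finiteness of ${\sf S}$ guaranteed by Condition~\ref{SM:cond}. I would therefore present the proof as a one-line reduction to Theorem~\ref{deltapm:thm}, preceded by this observation on $\Delta$.
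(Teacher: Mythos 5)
Your proposal is correct and takes essentially the same route as the paper, which states Theorem~\ref{INCL2} precisely as a special case of parts (i) and (ii) of Theorem~\ref{deltapm:thm} with $\Delta = \mathbb{T}\setminus {\sf S}$ and offers no further argument. Your explicit verification that $\mathbb{T}\setminus {\sf S}$ is open and of full measure, thanks to the finiteness of ${\sf S}$ from Condition~\ref{SM:cond}, is exactly the (implicit) point the reduction rests on.
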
   

Next, we consider the wave operators produced by the discontinuities of $\omega$. 
Let ${\sf S}^{(\pm)}, {\sf S}_0,$ be the sets in the union \eqref{SS:eq}. Every 
singular point $\eta_k\in {\sf S}^{(\pm)}$ 
 (but not $\eta_k\in {\sf S}_0$) 
 produces a new channel of scattering. 
Following the construction in Section~\ref{jump:subsect}, we choose an arbitrary 
$\varepsilon  \in (0,2\pi)$ and 
introduce auxiliary symbols
\renewcommand{\theequation}{\arabic{section}.\arabic{equation}+}
\begin{equation}
 \left.
\begin{aligned}
\omega_{k}(\z)&= \omega (\eta_k - 0)\q {\rm for}\q 
\z\in (e^{-i \varepsilon} \eta_k, \eta_k) 
\\
\omega_{k}(\z)&= \omega (\eta_k + 0)\q {\rm for}\q \z\notin (e^{-i\varepsilon} \eta_k, \eta_k)  
\end{aligned}
\right\} \q{\rm if }\q  \eta_k\in {\sf S}^{(+)}
\label{eq:G+}
 \end{equation}
\setcounter{equation}{0}
\renewcommand{\theequation}{\arabic{section}.\arabic{equation}--}
and
  \begin{equation}
 \left.
\begin{aligned}
\omega_{k} (\z)&= \omega (\eta_k + 0)\q 
{\rm for}\q \z\in (\eta_k, e^{i \varepsilon} \eta_k)
\\
\omega_{k}(\z)&= \omega (\eta_k - 0)\q {\rm for}\q \z\not\in 
(\eta_k, e^{i\varepsilon}\eta_k) 
\end{aligned}
\right\} \q{\rm if }\q \eta_k\in {\sf S}^{(-)}.
 \label{eq:G-}\end{equation} 
\renewcommand{\theequation}{\arabic{section}.\arabic{equation}}
The Toeplitz operator $T_{k}=T(\omega_{k})$ has 
simple absolutely continuous spectrum that coincides with the interval $\Lambda_k$ defined in \eqref{Lk:eq}.

Theorem~\ref{JUMP}  implies the following result.
  
\begin{theorem} \label{INCL3} 
Suppose that the condition \e{eq:Ju} holds at some point $\eta_{k}\in {\sf S}^{(\pm)}$. 
Let the symbol $\omega_{k}$  be defined by relations \e{eq:G+} or \e{eq:G-}.
Then the wave operator $W_{\pm} (T , T_{k})$ 
exists and is isometric on ${\mathbb H}^2$. 
\end{theorem}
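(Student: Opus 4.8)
The plan is to obtain this statement as a direct specialization of Theorem~\ref{JUMP}, supplemented by the absolute continuity of the model operator $T_k$ for the isometry claim. First I would identify the symbol $\omega_k$ with the model symbol $\omega_0$ of Theorem~\ref{JUMP}. Setting $\z_0 = \eta_k$, $\alpha_+ = \omega(\eta_k+0)$ and $\alpha_- = \omega(\eta_k-0)$, the definition \eqref{eq:G+} coincides verbatim with \eqref{eq:Ju+}, and \eqref{eq:G-} with \eqref{eq:Ju-}. The one point requiring care is matching the sign hypothesis of Theorem~\ref{JUMP} to the correct sign of the wave operator. For $\eta_k \in {\sf S}^{(+)}$ the defining inequality $\omega(\eta_k-0) - \omega(\eta_k+0) > 0$ reads $\alpha_+ < \alpha_-$, which is exactly the hypothesis under which Theorem~\ref{JUMP} yields $W_+(T, T(\omega_0)) = W_+(T, T_k)$; symmetrically, $\eta_k \in {\sf S}^{(-)}$ gives $\alpha_- < \alpha_+$ and hence $W_-(T, T_k)$. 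Thus, in compact form, $\eta_k \in {\sf S}^{(\pm)}$ is equivalent to $\alpha_\pm < \alpha_\mp$. Since condition \eqref{eq:Ju} is assumed at $\eta_k$, Theorem~\ref{JUMP} applies verbatim and the existence of $W_\pm(T, T_k)$ follows.

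For the isometry I would invoke property~(d) of Section~\ref{ST:subsect} in the special case $J = I$, $\mathcal H = \mathcal G = \mathbb H^2$, which guarantees that $W_\pm(T, T_k) = W_\pm(T, T_k; I)$ is automatically isometric on $\mathcal H^{\rm(ac)}_{T_k}$. It therefore remains only to show $\mathcal H^{\rm(ac)}_{T_k} = \mathbb H^2$, i.e. that $T_k$ is purely absolutely continuous. This is where the step-function structure of $\omega_k$ enters: by \eqref{prop+:eq}--\eqref{prop-:eq}, $T_k$ is an affine transform $T_k = aI + b\,T(\1_{\dots})$ of a single indicator-type Toeplitz operator, with $b = |\alpha_+ - \alpha_-| > 0$. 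The latter operator is diagonalized explicitly by the unitary $\Phi_s$ of Section~\ref{model:subsect} as multiplication by $\lambda$ on $L^2(0,1)$, hence is absolutely continuous on all of $\mathbb H^2$; the same is then true of the affine transform $T_k$. Consequently $P^{\rm(ac)}_{T_k} = I$, and the isometry of $W_\pm(T, T_k)$ on the full space $\mathbb H^2$ follows.

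I do not expect a genuine obstacle here, since the statement is essentially a translation of Theorem~\ref{JUMP} into the global notation of this section. The only points needing attention are the bookkeeping that links the channel label in ${\sf S}^{(\pm)}$ to the inequality $\alpha_\pm < \alpha_\mp$ required by Theorem~\ref{JUMP}, and making explicit that $T_k$ is purely absolutely continuous, so that the generic isometry conclusion of property~(d) upgrades from the a.c. subspace of $T_k$ to the whole of $\mathbb H^2$ rather than a proper subspace.
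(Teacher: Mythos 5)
Your proposal is correct and takes essentially the same route as the paper: the paper obtains Theorem~\ref{INCL3} as an immediate specialization of Theorem~\ref{JUMP} (existence), with the isometry following from property (d) of Section~\ref{ST:subsect} for $J=I$ combined with the fact, noted just before the theorem, that $T_k$ has purely absolutely continuous spectrum. The sign bookkeeping $\eta_k\in {\sf S}^{(\pm)}\Leftrightarrow \alpha_\pm<\alpha_\mp$ and your justification of $P^{\rm(ac)}_{T_k}=I$ via the affine relation $T_k=\alpha I+\beta\, T(\1_{\cdot})$ and the explicit diagonalization $\Phi_s$ are exactly the details the paper leaves implicit.
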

 
\begin{corollary}\label{INCL31}
The spectral representation 
of the operator $T$ restricted to 
the subspace ${\rm Ran} \ W_\pm(T , T _k) $ is realized on the 
space $L^2(\Lambda_k)$.
       \end{corollary}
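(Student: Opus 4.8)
The plan is to transport the known spectral representation of the model operator $T_k$ across the wave operator $W_{\pm}(T, T_k)$, using that this wave operator is isometric (Theorem~\ref{INCL3}) together with the general property (e) of wave operators stated in Section~\ref{ST:subsect}.

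First I would record the spectral representation of $T_k = T(\omega_k)$ itself. By \eqref{prop+:eq} and \eqref{prop-:eq} the symbol $\omega_k$ is, up to an affine rescaling of its values, the indicator of an arc: explicitly $T_k = \alpha_{+} I + (\alpha_{-}-\alpha_{+})\, T(\1_{(e^{-i\varepsilon}\eta_k,\eta_k)})$ when $\eta_k\in {\sf S}^{(+)}$ (so $\alpha_{+}<\alpha_{-}$), and analogously with the roles of $\alpha_{\pm}$ and the arc interchanged when $\eta_k\in {\sf S}^{(-)}$. The unitary operator $\Phi(\1_{\Delta}):{\mathbb H}^2\to L^2(0,1)$ of Section~\ref{model:subsect} realizes the spectral representation of $T(\1_\Delta)$ on $L^2(0,1)$ with simple spectrum; composing it with the affine change of spectral variable $\lambda=\alpha_{+}+(\alpha_{-}-\alpha_{+})\mu$ (which maps $(0,1)$ orientation-preservingly onto the interior of $\Lambda_k=[\alpha_{+},\alpha_{-}]$) produces a unitary $\Phi_k:{\mathbb H}^2\to L^2(\Lambda_k)$ conjugating $T_k$ into multiplication by $\lambda$. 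This confirms that $T_k$ is purely absolutely continuous with simple spectrum and that its spectral representation is realized on $L^2(\Lambda_k)$.

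Next I would apply property (e). Since $W:=W_{\pm}(T, T_k)$ is isometric on ${\mathbb H}^2=\mathcal H^{\rm(ac)}_{T_k}$ by Theorem~\ref{INCL3}, property (e) shows that the restriction of $T$ to ${\rm Ran}\,W$ is unitarily equivalent to $T_k^{\rm(ac)}=T_k$, with $W$ itself implementing the equivalence. Concretely, integrating the intertwining relation (b), $E_T(X)W=W E_{T_k}(X)$, gives $T|_{{\rm Ran}\,W}=W T_k W^{*}$, where $W^{*}$ inverts the isometry $W$ on its range. Hence the unitary $\Phi_k W^{*}:{\rm Ran}\,W\to L^2(\Lambda_k)$ conjugates $T|_{{\rm Ran}\,W}$ into multiplication by $\lambda$ on $L^2(\Lambda_k)$, which is exactly the asserted spectral representation.

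I do not expect a genuine obstacle here: the corollary is a bookkeeping exercise in carrying the spectral representation of $T_k$ through $W$. The only points demanding care are fixing the direction of the isometry $W$ (it maps ${\mathbb H}^2$ onto ${\rm Ran}\,W$, so its adjoint is its inverse on the range) and verifying that the affine rescaling sends $(0,1)$ onto $\Lambda_k$ with the correct orientation in both the ${\sf S}^{(+)}$ and ${\sf S}^{(-)}$ cases.
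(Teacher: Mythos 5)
Your proposal is correct and matches the paper's (implicit) argument: the paper treats this corollary as immediate from the fact that $T_k$ has simple absolutely continuous spectrum realized on $L^2(\Lambda_k)$ (via the diagonalization $\Phi(\1_\Delta)$ of Section~4.1 together with the affine relations \eqref{prop+:eq}, \eqref{prop-:eq}), combined with the isometry of $W_\pm(T,T_k)$ from Theorem~\ref{INCL3} and property (e) of wave operators. Your careful handling of the affine rescaling onto $\Lambda_k$ and of the direction of the isometry is exactly the bookkeeping the paper leaves to the reader.
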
 
       
       Although symbols \e{eq:G+} and \e{eq:G-} depend on $\varepsilon $, Remark~\ref{JUMPe} shows that the dependence of $W_\pm (T, T _{k}  )$ 
 on $\varepsilon $ is trivial.

The next result follows from
Theorems~\ref{Orind} and \ref{Orth}. It shows that different   scattering channels
are orthogonal   to each other.
  
\begin{theorem} \label{INCL4} 
  Let the assumptions of Theorem~$\ref{INCL2} $ hold. 
  Suppose also that 
the condition \eqref{eq:Ju} is satisfied at all points $\eta_k\in {\sf S}^{(\pm)}$. 
Then the ranges $\Ran W_{\pm}(T , T _{k})$ 
for all  $\eta_k\in {\sf S}^{(\pm)}$ 
are orthogonal to  $\Ran W_\pm(T, \boldsymbol\Omega;{\mathbb P})$ and are pairwise orthogonal 
to each other.  
\end{theorem}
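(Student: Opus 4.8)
The plan is to reduce both orthogonality assertions to Theorems~\ref{Orind} and \ref{Orth}, which have already done all the analytic work; the only new ingredient is to strip off the scalar shifts concealed in the model operators $T_k = T(\omega_k)$. The structural fact I would record first is that, by the definitions \eqref{eq:G+}, \eqref{eq:G-} (equivalently by \eqref{prop+:eq}, \eqref{prop-:eq}), each symbol decomposes as $\omega_k = c_k + \alpha_k\1_{\Delta_k}$, where $\Delta_k$ is the arc $(e^{-i\varepsilon}\eta_k,\eta_k)$ if $\eta_k\in{\sf S}^{(+)}$ and $(\eta_k,e^{i\varepsilon}\eta_k)$ if $\eta_k\in{\sf S}^{(-)}$, and $c_k,\alpha_k\in{\mathbb R}$ are the constants determined by \eqref{eq:G+}, \eqref{eq:G-}. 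Since $T$ is linear in the symbol, $T_k = c_k I + T(\alpha_k\1_{\Delta_k})$, whence
\begin{align*}
e^{-iT_k t} = e^{-ic_k t}\, e^{-iT(\alpha_k\1_{\Delta_k})t},
\end{align*}
with a unimodular scalar factor $e^{-ic_k t}$.

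Next I would turn orthogonality of ranges into a statement about the free evolutions. Because all the wave operators here emanate from the same operator $T$, cancelling the common unitary $e^{iTt}$ gives, for $f=W_\pm(T,T_k)g_k$ and $f'=W_\pm(T,T_l)g_l$,
\begin{align*}
(f,f') = \lim_{t\to\pm\infty}\big(e^{-iT_k t}g_k,\, e^{-iT_l t}g_l\big),
\end{align*}
and analogously $(W_\pm(T,\boldsymbol\Omega;{\mathbb P})h,\, W_\pm(T,T_k)g_k) = \lim_{t\to\pm\infty}({\mathbb P}e^{-i\boldsymbol\Omega t}P^{\rm(ac)}_{\boldsymbol\Omega}h,\, e^{-iT_k t}g_k)$. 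Inserting the factorisation above, the scalar phases pull out and, being of modulus one, leave the absolute values unchanged:
\begin{align*}
|(f,f')| = \lim_{t\to\pm\infty}\big|\big(e^{-iT(\alpha_k\1_{\Delta_k})t}g_k,\, e^{-iT(\alpha_l\1_{\Delta_l})t}g_l\big)\big|.
\end{align*}
The quantities on the right are exactly those shown to tend to zero inside the proof of Theorem~\ref{Orind} (the limit \eqref{eq:ori}) and of Theorem~\ref{Orth}, for the dense sets of $g_k,g_l$ with $\Phi(\1_{\Delta_k})g_k,\Phi(\1_{\Delta_l})g_l\in C_0^\infty(0,1)$; boundedness of the wave operators then propagates the conclusion to all of ${\mathbb H}^2$.

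Finally I would check the one geometric hypothesis of Theorem~\ref{Orind}: that for $k\neq l$ the four endpoints of $\Delta_k$ and $\Delta_l$ are pairwise distinct. Since $\eta_k\neq\eta_l$, the only possible coincidences are of the form $\eta_l = e^{\mp i\varepsilon}\eta_k$, and these are avoided simultaneously for all of the finitely many pairs once $\varepsilon$ is chosen small enough. This is legitimate because, by Remark~\ref{JUMPe}, $W_\pm(T,T_k)$ depends on $\varepsilon$ only through a unitary right factor, so $\Ran W_\pm(T,T_k)$ is independent of $\varepsilon$ and the orthogonality may be verified for any convenient choice. For the orthogonality with $\Ran W_\pm(T,\boldsymbol\Omega;{\mathbb P})$ no separation of endpoints is required and Theorem~\ref{Orth} applies after the same phase reduction. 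I expect the only real bookkeeping---hence the mildest of obstacles---to be this tracking of the scalar phases together with the $\varepsilon$-dependence; all the substantive estimates are already contained in Theorems~\ref{Orind} and \ref{Orth}.
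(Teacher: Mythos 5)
Your proposal is correct and follows essentially the same route as the paper, which derives Theorem~\ref{INCL4} directly from Theorems~\ref{Orind} and \ref{Orth}; the phase factorisation $e^{-iT_kt}=e^{-ic_kt}e^{-iT(\alpha_k\1_{\Delta_k})t}$ you use is exactly the content of \eqref{prop+:eq}--\eqref{prop-:eq}, and the limits you invoke are \eqref{eq:ori} and \eqref{eq:ort2} from those proofs. Your explicit handling of the scalar shifts (which the paper leaves implicit, and which matters since $W_\pm(T,T(\alpha_k\1_{\Delta_k}))$ itself need not exist when $c_k\neq0$) and of the choice of $\varepsilon$ via Remark~\ref{JUMPe} are careful elaborations of the intended argument, not a different method.
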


It follows from the above results that 
 \begin{equation}
\Ran W_\pm (T , \boldsymbol\Omega; {\mathbb P}   )
\oplus \ \bigoplus_{ \eta_k\in {\sf S}^{(\pm)}}\Ran W_{\pm} (T , T _{k} )\subset {\mathbb H}^2.
\label{eq:AC+}
\end{equation} 
Our final objective is to 
establish the equality in \eqref{eq:AC+}. In other words, 
we intend to prove the asymptotic completeness of the wave operators involved.

\subsection{Counting multiplicities}
    
  Our proof of the asymptotic completeness requires an elementary result of a general nature (cf. \cite[Theorem~1.5.7]{Yafaev1}) 
    concerning self-adjoint operators with finite spectral multiplicity.

\begin{theorem}\label{mult:lem}
Let $A$ be a self-adjoint operator on a 
Hilbert space $\mathcal{H}$, and let $\tilde A$ 
be its restriction to its invariant subspace $\widetilde{\mathcal{H}}$. 
Suppose that, for some interval $\Lambda\subset{\mathbb R}$, 
both operators $A$ and $\tilde A$, are unitarily equivalent to the 
 operator of multiplication by independent variable $\lambda$ on the space 
$L^{2}(\Lambda; {\mathbb C}^m)$ where $m < \infty$.
Then $\widetilde{\mathcal H} = \mathcal H$ and $\tilde A = A$.
\end{theorem}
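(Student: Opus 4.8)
The plan is to use the spectral representation of $A$ to convert the statement into a question about fields of subspaces of ${\mathbb C}^m$, where the finiteness of $m$ is exactly what forces equality. First I would upgrade the hypothesis ``invariant'' to ``reducing''. Since $A$ is unitarily equivalent to multiplication by $\lambda$ on $L^2(\Lambda;{\mathbb C}^m)$ with $\Lambda$ a bounded interval, $A$ is bounded, and for a bounded self-adjoint operator every closed invariant subspace reduces it: from $A\widetilde{\mathcal H}\subset\widetilde{\mathcal H}$ and $A=A^*$ one gets $(Ag,f)=(g,Af)=0$ for all $g\in\widetilde{\mathcal H}^\perp$, $f\in\widetilde{\mathcal H}$, so $A\widetilde{\mathcal H}^\perp\subset\widetilde{\mathcal H}^\perp$. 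Hence $\mathcal H=\widetilde{\mathcal H}\oplus\widetilde{\mathcal H}^\perp$ decomposes $A$ as $A=\tilde A\oplus A'$ with $A'=A|_{\widetilde{\mathcal H}^\perp}$, and both summands are absolutely continuous with spectrum contained in $\overline\Lambda$.

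The heart of the argument is additivity of spectral multiplicity under orthogonal sums. On the interval $\Lambda$ the operator $A$ has uniform multiplicity $m$ and, by hypothesis, so does $\tilde A$; therefore the multiplicity of $A'$ on $\Lambda$ equals $m-m=0$. This subtraction is legitimate precisely because $m<\infty$, and it is here that the finiteness assumption is indispensable. A vanishing multiplicity on $\Lambda$ means $E_{A'}(\Lambda)=0$; but $A'$ is absolutely continuous with $\spec(A')\subset\overline\Lambda=[\lambda_1,\lambda_2]$, so $E_{A'}(\{\lambda_1,\lambda_2\})=0$ and hence $E_{A'}(\Lambda)=E_{A'}(\overline\Lambda)=I$ on $\widetilde{\mathcal H}^\perp$. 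Comparing, $\widetilde{\mathcal H}^\perp=\{0\}$, i.e. $\widetilde{\mathcal H}=\mathcal H$ and $\tilde A=A$, as claimed.

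The step needing the most care is the additivity of multiplicity, and the cleanest way to make it transparent is to pass to the model space: let $U\colon\mathcal H\to L^2(\Lambda;{\mathbb C}^m)$ diagonalize $A$, so that $U\widetilde{\mathcal H}$ is a subspace reducing multiplication by $\lambda$. Such a subspace is \emph{decomposable}: the orthogonal projection onto it commutes with every scalar multiplication, hence is given by a measurable field $\lambda\mapsto P(\lambda)$ of orthogonal projections on ${\mathbb C}^m$, and the multiplicity of $\tilde A$ at $\lambda$ equals $\dim\Ran P(\lambda)=\operatorname{rank}P(\lambda)$. Requiring this to equal $m$ forces $P(\lambda)=I$ a.e., since $\operatorname{rank}P(\lambda)\le m$ with equality only for the full projection on ${\mathbb C}^m$. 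I would stress that the hypothesis $m<\infty$ cannot be removed: for $m=\infty$ a proper reducing subspace of $L^2(\Lambda;\mathfrak N)$, $\dim\mathfrak N=\infty$, can carry a restriction with the same uniform (infinite) multiplicity, so the conclusion fails.
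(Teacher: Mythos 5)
Your proof is correct and, at its decisive step, is the same as the paper's: the paper also passes to the model space $L^{2}(\Lambda;{\mathbb C}^m)$, represents the invariant subspace as a direct integral $\int_\Lambda^{\oplus}G(\lambda)\,d\lambda$ of measurable subspaces $G(\lambda)\subset{\mathbb C}^m$, deduces $\dim G(\lambda)=m$ a.e.\ from the assumed unitary equivalence, and uses $m<\infty$ to force $G(\lambda)={\mathbb C}^m$. Your preliminary upgrades --- proving that an invariant subspace of a self-adjoint operator is reducing (a point the paper uses tacitly), the multiplicity-subtraction framing on $\widetilde{\mathcal H}^{\perp}$, and the counterexample showing $m<\infty$ is essential --- are sound additions, but they reduce to the same direct-integral argument rather than constituting a different route.
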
    
    
\begin{proof}
 We may assume that $\mathcal H = L^2(\Lambda; \mathbb C^m)$ and 
$(Af)(\lambda) = \lambda f(\lambda),\ f\in \mathcal H$, a.e. $\lambda\in\Lambda$. 
  Since $\tilde A$ is a restriction of   $A$, it acts as multiplication by $\lambda$ in the direct integral (see, for example, \S\S 7.1, 7.2 of the book \cite{BS})
\begin{align*}
  \int_\Lambda^{\bigoplus} G(\lambda) d\lambda   
\end{align*} 
where  $ G(\lambda)$ is a measurable family of some subspaces of $ \mathbb C^m$.
At the same time, $\tilde A$ is unitarily equivalent to multiplication by $\lambda$ 
on the space $L^2(\Lambda; \mathbb C^m)$. It follows  that 
$\dim G(\lambda) = m$ for a.e. $\lambda\in \Lambda$.  
Since $m < \infty$, we conclude that 
$G(\lambda) = \mathbb C^m$  and hence $\widetilde{\mathcal H} = \mathcal H$ and 
$\tilde A = A$, as required.
\end{proof}    
    
   \begin{corollary} \label{mult:cor} 
Let $A$ be a self-adjoint operator on a 
Hilbert space $\mathcal{H}$, 
and let $A_{k}$, $k = 0,\ldots, N$, $N<\infty$, 
be its restrictions to   pairwise orthogonal invariant subspaces 
$\mathcal{H}_{k}$. 
Suppose that, for some interval $\Lambda\subset{\mathbb R}$, 
the operators $A$ and $A_{k}$, $k = 0,\ldots, N$, 
are unitarily equivalent to the 
operators of multiplication by 
independent variable $\lambda$ in the spaces 
$L^{2}(\Lambda; {\mathbb C}^m)$, $m < \infty$, and
$L^{2} (\Lambda; {\mathbb C}^{m_{k}})$, $k = 0,\ldots, N$, respectively. 
Assume that
\begin{equation*}
\sum_{k=0}^{N} m_k = m.
%\label{eq:mm}
\end{equation*}
Then
\[
\bigoplus_{k=0}^{N}
\mathcal{  H}_{k} = 
\mathcal{H} \q {\rm and} \q\bigoplus_{k=0}^{N}A_{k} = A.
\]
  \end{corollary}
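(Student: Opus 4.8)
The plan is to obtain this corollary by a single application of Theorem~\ref{mult:lem} to the subspace built from all the $\mathcal{H}_k$ at once. First I would set $\widetilde{\mathcal H} = \bigoplus_{k=0}^N \mathcal H_k$. Since each $\mathcal H_k$ is an invariant subspace of $A$ and the $\mathcal H_k$ are pairwise orthogonal, their orthogonal sum $\widetilde{\mathcal H}$ is again a closed invariant subspace of $A$, and the restriction $\tilde A := A|_{\widetilde{\mathcal H}}$ decomposes as the orthogonal sum $\tilde A = \bigoplus_{k=0}^N A_k$.

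The key step is to identify the spectral model of $\tilde A$. Each $A_k$ is unitarily equivalent to multiplication by $\lambda$ on $L^2(\Lambda; {\mathbb C}^{m_k})$, so $\tilde A$ is unitarily equivalent to multiplication by $\lambda$ on the orthogonal sum
\[
\bigoplus_{k=0}^N L^2(\Lambda; {\mathbb C}^{m_k}).
\]
Because multiplication by $\lambda$ acts fibrewise, this orthogonal sum is canonically identified, via the block decomposition of the fibre ${\mathbb C}^{m_0}\oplus\cdots\oplus{\mathbb C}^{m_N}$, with multiplication by $\lambda$ on $L^2(\Lambda; {\mathbb C}^{m_0+\cdots+m_N})$. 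Invoking the hypothesis $\sum_{k=0}^N m_k = m$, I would then conclude that $\tilde A$ is unitarily equivalent to multiplication by $\lambda$ on $L^2(\Lambda; {\mathbb C}^{m})$, that is, on exactly the same finite-multiplicity model space as $A$ itself.

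At this point both $A$ and its restriction $\tilde A$ to the invariant subspace $\widetilde{\mathcal H}$ are unitarily equivalent to multiplication by $\lambda$ on $L^2(\Lambda; {\mathbb C}^m)$ with the common finite multiplicity $m$, so Theorem~\ref{mult:lem} applies verbatim and yields $\widetilde{\mathcal H} = \mathcal H$ together with $\tilde A = A$. This is precisely the assertion $\bigoplus_{k=0}^N \mathcal H_k = \mathcal H$ and $\bigoplus_{k=0}^N A_k = A$.

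I expect no serious obstacle here, since the entire content is the matching of multiplicities already carried out in Theorem~\ref{mult:lem}. The only point deserving a word of care is the canonical identification $\bigoplus_k L^2(\Lambda;{\mathbb C}^{m_k}) \cong L^2(\Lambda;{\mathbb C}^m)$ together with the verification that it intertwines the respective multiplication operators; this holds because multiplication by $\lambda$ is block-diagonal with respect to any direct-sum decomposition of the fibre, and everything else reduces to a direct citation of the theorem.
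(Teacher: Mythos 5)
Your proposal is correct and takes essentially the same approach as the paper: its proof also consists of a single application of Theorem~\ref{mult:lem} to the subspace $\widetilde{\mathcal H} = \bigoplus_{k=0}^{N}\mathcal H_{k}$ and the operator $\tilde A = \bigoplus_{k=0}^{N}A_{k}$. The only difference is that you explicitly spell out the canonical identification $\bigoplus_{k}L^{2}(\Lambda;{\mathbb C}^{m_{k}})\cong L^{2}(\Lambda;{\mathbb C}^{m})$ intertwining the multiplication operators, which the paper leaves implicit.
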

  
  \begin{proof}
  It suffices to apply Theorem~\ref{mult:lem} to 
  the subspace $\widetilde{\mathcal H} = \oplus_{k=0}^{N}  
\mathcal{  H}_{k} \subset {\mathcal H} $ and the operator $\tilde A =  \oplus_{k=0}^{N}A_{k}$.
  \end{proof}
       
\subsection{Asymptotic completeness}

  In order to use Corollary \ref{mult:cor} for the proof of the 
  asymptotic completeness, 
we need to find spectral multiplicities 
%on intervals $\Lambda$ where they are constant, 
for the operator $T$ restricted to the subspaces 
on the left-hand side of \eqref{eq:AC+}. 
The required result for $\Ran W_\pm(T, T_k)$ is given by Corollary 
\ref{INCL31}. 

Let us now consider   the first term in \eqref{eq:AC+}.
Below we systematically use the intertwining property \e{intertwine:eq}. 

 \begin{lemma} \label{iso} 
 Suppose that 
 the conditions of Theorem~\ref{INCL2} hold, and that an interval $\Lambda$ satisfies \eqref{eq:TH}. 
 Then the spectral representation of the operator $T $ restricted 
 to   the subspace
\begin{equation}
 \mathcal H^{(\pm)} (\Lambda) 
 : =\Ran \big( E_{T}(\Lambda) W_\pm (T , \boldsymbol\Omega; {\mathbb P} \1_{\Delta^{(\pm)}} )\big)
 \label{eq:ranloc}
\end{equation} 
 is realized on the space $L^{2}\big(\Lambda; {\mathbb C}^{n^{(\pm)}}\big)$.
     \end{lemma}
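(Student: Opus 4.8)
The plan is to reduce the statement to Theorem~\ref{omega} by transporting its spectral representation through the wave operator
\[
W := W_\pm (T, \boldsymbol\Omega; {\mathbb P}\1_{\Delta^{(\pm)}}),
\]
using the isometry and intertwining properties already at our disposal. Write $\mathcal G^{(\pm)} = \Ran \1_{\Delta^{(\pm)}}$. By Theorem~\ref{INCL2} (which specializes Theorem~\ref{deltapm:thm} to $\Delta = {\mathbb T}\setminus {\sf S}$) the operator $W$ exists and, in view of the relation $W = W_\pm(T, \boldsymbol\Omega; {\mathbb P})$ from \e{deltapm:eq} together with \e{eq:isom}, it satisfies $W^* W = \1_{\Delta^{(\pm)}}$. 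Hence $W$ maps $\mathcal G^{(\pm)}$ isometrically onto its range and vanishes on $(\mathcal G^{(\pm)})^\perp$. Moreover, the general intertwining identity \e{intertwine:eq} yields
\[
E_T(X) W = W E_{\boldsymbol\Omega}(X) \qquad \text{for every Borel set } X\subset {\mathbb R}.
\]

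First I would take $X = \Lambda$ in this identity to rewrite the subspace \e{eq:ranloc}. Since $\boldsymbol\Omega$ commutes with $\1_{\Delta^{(\pm)}}$, so does $E_{\boldsymbol\Omega}(\Lambda)$, and therefore
\[
\mathcal H^{(\pm)}(\Lambda) = \Ran\big( E_T(\Lambda) W \big) = \Ran\big( W E_{\boldsymbol\Omega}(\Lambda)\1_{\Delta^{(\pm)}} \big) = W\, \Ran\big( E_{\boldsymbol\Omega}(\Lambda)\1_{\Delta^{(\pm)}} \big).
\]
As $\Ran(E_{\boldsymbol\Omega}(\Lambda)\1_{\Delta^{(\pm)}})$ is a subspace of $\mathcal G^{(\pm)}$, on which $W$ is isometric, the restriction of $W$ to it is a unitary map onto $\mathcal H^{(\pm)}(\Lambda)$.

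Next, the same intertwining shows that, under this unitary, the operator $T E_T(\Lambda)$ restricted to $\mathcal H^{(\pm)}(\Lambda)$ is carried precisely to the operator $\boldsymbol\Omega\1_{\Delta^{(\pm)}} E_{\boldsymbol\Omega}(\Lambda)$ restricted to $\Ran(E_{\boldsymbol\Omega}(\Lambda)\1_{\Delta^{(\pm)}})$. Thus $T$ on $\mathcal H^{(\pm)}(\Lambda)$ is unitarily equivalent to $\boldsymbol\Omega\1_{\Delta^{(\pm)}}$ on its spectral subspace for $\Lambda$. It remains only to invoke Theorem~\ref{omega}, which asserts that the spectral representation of $\boldsymbol\Omega\1_{\Delta^{(\pm)}}$ on $\Lambda$ is realized on $L^2(\Lambda; {\mathbb C}^{n^{(\pm)}})$; transporting this representation through $W$ produces the desired spectral representation of $T$ on $\mathcal H^{(\pm)}(\Lambda)$ on the same space.

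The argument is essentially bookkeeping, since all of the analytic input --- existence of $W$, the isometry \e{eq:isom}, the intertwining \e{intertwine:eq}, and the multiplicity count $n^{(\pm)}$ --- is already available. The only point demanding care, and the one I would check explicitly, is that cutting $T$ down to the spectral interval $\Lambda$ corresponds under $W$ exactly to cutting $\boldsymbol\Omega\1_{\Delta^{(\pm)}}$ down to its $\Lambda$-spectral subspace, with no change of multiplicity; this is ensured by the commutation of $E_{\boldsymbol\Omega}(\Lambda)$ with $\1_{\Delta^{(\pm)}}$ and the isometry of $W$ on $\mathcal G^{(\pm)}$. I expect no further obstacle.
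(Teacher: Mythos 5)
Your proposal is correct and is essentially the paper's own argument: the paper likewise uses the intertwining property to identify $E_{T}(\Lambda) W_\pm (T, \boldsymbol\Omega; {\mathbb P}\1_{\Delta^{(\pm)}})$ with $W_\pm (T, \boldsymbol\Omega; {\mathbb P}\1_{\delta^{(\pm)}})$, a unitary map from $L^2(\delta^{(\pm)})$ onto $\mathcal H^{(\pm)}(\Lambda)$, and then invokes Theorem~\ref{omega}. Your subspace $\Ran\big(E_{\boldsymbol\Omega}(\Lambda)\1_{\Delta^{(\pm)}}\big)$ is exactly the paper's $L^2(\delta^{(\pm)})$, since $E_{\boldsymbol\Omega}(\Lambda)=\1_{\omega^{-1}(\Lambda)}$ and $\delta^{(\pm)}=\Delta^{(\pm)}\cap\omega^{-1}(\Lambda)$.
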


  \begin{proof}  
Let $\delta^{(\pm)} = \delta^{(\pm)}(\Lambda)$ be defined by \eqref{eq:AUU}. 
By Theorem \ref{omega}, the spectral representation of 
the operator $\boldsymbol\Omega$  
restricted to $L^2(\delta^{(\pm)})$ is realized 
on the space $L^2(\Lambda; \mathbb C^{n^{(\pm)}})$. According to Theorem~\ref{INCL2} 
 and the 
intertwining property \eqref{intertwine:eq}, the operator
\[
E_{T}(\Lambda) W_\pm (T , \boldsymbol\Omega; {\mathbb P} \1_{\Delta^{(\pm)}})
=  W_\pm (T , \boldsymbol\Omega; {\mathbb P} \1_{\delta^{(\pm)}}):  
L^2(\delta^{(\pm)})
\to  \mathcal H^{(\pm)} (\Lambda),
\]
is unitary. Therefore the 
spectral multiplicities of the operators 
$\left.\boldsymbol\Omega\right|_{L^2(\delta^{(\pm)})}$ and 
$\left.T\right|_{\mathcal{H}^{(\pm)}(\Lambda)}$ 
on the interval $\Lambda$ coincide, 
and both equal $n^{(\pm)}$. 
%
%
%result on the spectral multiplicity on the operator $\boldsymbol\Omega$ implies
%  the required result on the operator $T $. 
  \end{proof} 
 
First, we verify a ``local" form of the asymptotic completeness. 
Recall that the intervals $\Lambda_{k}$ are defined by \e{Lk:eq}, 
the subset ${\sf S}(\Lambda)\subset {\sf S}$ is distinguished by 
the condition $\Lambda\subset \Lambda_k$ for $\eta_{k}\in {\sf S}(\Lambda)$, 
and ${\sf S}^{(\pm)}(\Lambda) = {\sf S}(\Lambda)\cap {\sf S}^{(\pm)}$.
 
\begin{theorem} \label{MainL} 
Suppose that the symbol $\omega$   satisfies Condition 
\ref{SM:cond} with some finite set ${\sf S}$, and that 
$\omega \in  {\mathbb W}^{2,1}_{\rm loc}({\mathbb T}\setminus {\sf S})$. 
% Set $T= T(\omega)$, $T_{k} = T(\omega_{k})$. 
Let $\Lambda$ be an interval satisfying \eqref{eq:TH}, 
and assume that condition \e{eq:Ju} holds at each point 
$\eta_k\in {\sf S}^{(\pm)} ( \Lambda)$. Then 
 the equality
 \begin{align}
\Ran \big( E_{T}(\Lambda) W_\pm (T, & \boldsymbol\Omega; {\mathbb P}\1_{\Delta^{(\pm)}} )\big)\notag\\[0.2cm]
&\ \oplus  \bigoplus_{ \eta_k\in {\sf S}^{(\pm)}(\Lambda)}
\Ran \big( E_{T}(\Lambda)W_{\pm} (T, T_{k}) \big) = E_T(\Lambda){\mathbb H}^2
\label{eq:ACL+}
\end{align}
holds for both signs $``+ $'' and $``- $''. 
\end{theorem}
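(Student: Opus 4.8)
The plan is to reduce the completeness statement \eqref{eq:ACL+} to a pure multiplicity count and then invoke the abstract Corollary~\ref{mult:cor} as the final step. Concretely, I would identify the operator $A$ of that corollary with the restriction of $T$ to $E_T(\Lambda){\mathbb H}^2$, and the pairwise orthogonal pieces $\mathcal H_k$ with the ranges occurring on the left-hand side of \eqref{eq:ACL+}. Three facts then have to be assembled: that these pieces are pairwise orthogonal $T$-invariant subspaces; that the spectral multiplicity of $T$ on each piece is known; and that these multiplicities add up exactly to the total multiplicity $m$ of $T$ on $\Lambda$.

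First I would record that each subspace on the left of \eqref{eq:ACL+} is the range of a wave operator, hence invariant under $T$ by the intertwining property \eqref{intertwine:eq}, and that $E_T(\Lambda)$ commutes with $T$, so after applying the projection $E_T(\Lambda)$ we still obtain $T$-invariant subspaces of $E_T(\Lambda){\mathbb H}^2$. Their pairwise orthogonality is precisely the content of Theorem~\ref{INCL4}. Thus the left-hand side of \eqref{eq:ACL+} is an orthogonal direct sum of $T$-invariant subspaces, and it remains only to show that this sum exhausts $E_T(\Lambda){\mathbb H}^2$.

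Next comes the multiplicity bookkeeping. By Lemma~\ref{iso}, the spectral representation of $T$ on the first term $\mathcal H^{(\pm)}(\Lambda)$ is realized on $L^2(\Lambda; {\mathbb C}^{n^{(\pm)}})$, so this piece carries multiplicity $n^{(\pm)}$. For each $\eta_k\in {\sf S}^{(\pm)}(\Lambda)$, Corollary~\ref{INCL31} says the spectral representation of $T$ on $\Ran W_\pm(T, T_k)$ is realized on $L^2(\Lambda_k)$; since $\Lambda\subset\Lambda_k$ by the very definition of ${\sf S}^{(\pm)}(\Lambda)$, restricting by $E_T(\Lambda)$ leaves a multiplicity-one channel on $\Lambda$. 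As there are exactly $s^{(\pm)} = \#\{{\sf S}^{(\pm)}(\Lambda)\}$ such channels, the total multiplicity carried by the left-hand side of \eqref{eq:ACL+} equals $n^{(\pm)} + s^{(\pm)}$.

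Finally, Theorem~\ref{Ros_smooth:thm} gives the spectral multiplicity of $T$ on the whole space $E_T(\Lambda){\mathbb H}^2$ as $m = n^{(\pm)} + s^{(\pm)}$, see \eqref{count:eq} --- the same number, and crucially independent of the sign. Applying Corollary~\ref{mult:cor} with $A = T|_{E_T(\Lambda){\mathbb H}^2}$, with $\mathcal H_0 = \mathcal H^{(\pm)}(\Lambda)$, and with the remaining $\mathcal H_k$ the jump channels, the hypothesis $\sum_k m_k = m$ is exactly satisfied, and the corollary forces the orthogonal sum to be all of $E_T(\Lambda){\mathbb H}^2$, which is \eqref{eq:ACL+}. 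The one genuine input making the argument close is the exact matching $m = n^{(\pm)} + s^{(\pm)}$ from Theorem~\ref{Ros_smooth:thm}: this identity is what guarantees that the channels neither overcount nor leave a spectral gap, and it is the conceptual heart of the completeness. Everything else is the orthogonality of Theorem~\ref{INCL4} together with the abstract counting of Corollary~\ref{mult:cor}, so I expect no serious obstacle beyond carefully tracking that the jump channels contribute multiplicity one after restriction to $\Lambda$.
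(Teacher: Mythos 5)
Your proposal is correct and follows essentially the same route as the paper's own proof: both reduce \eqref{eq:ACL+} to the multiplicity count via Lemma~\ref{iso} (multiplicity $n^{(\pm)}$ on the thick channel), Corollary~\ref{INCL31} (multiplicity one per jump channel after restriction by $E_T(\Lambda)$), the identity $m = n^{(\pm)} + s^{(\pm)}$ of Theorem~\ref{Ros_smooth:thm}, and finally the abstract Corollary~\ref{mult:cor}. Your explicit appeal to Theorem~\ref{INCL4} for the pairwise orthogonality of the channels is a point the paper leaves implicit in writing the orthogonal sum, but it changes nothing of substance.
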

     
\begin{proof}
Note that the orthogonal sum over $\eta_k$ on the left-hand side of 
\eqref{eq:ACL+} contains 
$s^{(\pm)} =\# ({\sf S}^{ (\pm)}(\Lambda))$ terms. Thus, using the notation 
\begin{align*}
 \mathcal H_k^{(\pm)}(\Lambda) : = \Ran\big(E_T (\Lambda) W_\pm(T, T_{k})\big),
 \quad \eta_k\in {\sf S}^{(\pm)}(\Lambda),
\end{align*}
$\mathcal H (\Lambda) := E_T(\Lambda)\mathbb H^2$, 
and definition \eqref{eq:ranloc}, we can rewrite \eqref{eq:ACL+} as  
\begin{align}\label{compl:eq}
 \mathcal H^{(\pm)}(\Lambda)\oplus 
\bigoplus_{k=1}^{s^{(\pm)}}  \mathcal H_k^{(\pm)}(\Lambda) = \mathcal H (\Lambda).
\end{align}
In order to prove \eqref{compl:eq}, we make the following observations. 
 According to Lemma~\ref{iso} the 
 operator $T$ restricted to the subspace $\mathcal H^{(\pm)}(\Lambda)$
  has spectral multiplicity $n^{(\pm)}$.   
Similarly, Corollary \ref{INCL31}  
shows that, for every $k = 1, 2, \dots, s^{(\pm)}$ the operator $T$ on the subspace 
$\mathcal H_k^{(\pm)}(\Lambda)$ has spectral multiplicity $1$.

Now we use Corollary \ref{mult:cor} with 
 the spaces $\mathcal H (\Lambda)$, 
\begin{align*} 
 \widetilde{\mathcal H} (\Lambda) = \mathcal H^{(\pm)} (\Lambda)\oplus \bigoplus_{k=1}^{s^{(\pm)}}
  \mathcal H_k^{(\pm)} (\Lambda)\subset \mathcal H(\Lambda),
 \end{align*} 
 the operators 
\begin{align*}
A = \left.T\right|_{ \mathcal H(\Lambda)}, \q
 A_0 = \left.T\right|_{ \mathcal H^{(\pm)}(\Lambda)},\q 
 A_k = \left.T\right|_{  \mathcal H_k^{(\pm)}(\Lambda)},   
 \end{align*}
  and multiplicities
 \begin{align*}
m_0 = n^{(\pm)}, \q m_k = 1,\q k = 1, 2, \dots, s^{(\pm)}.
\end{align*}
According to Theorem \ref{Ros_smooth:thm}, 
the spectral multiplicity of the operator $A$  equals 
\begin{align*}
m = n^{(\pm)}+s^{(\pm)} =\sum_{k=0}^{s^{(\pm)}} m_k.
\end{align*}
Thus Corollary \ref{mult:cor} entails \eqref{compl:eq}, which completes the proof.
\end{proof}

The next theorem constitutes the main result of the paper. 

\begin{theorem} \label{Main} 
Suppose that the symbol $\omega$   satisfies Condition 
\ref{SM:cond} with some finite set ${\sf S}$, and that 
$\omega \in  {\mathbb W}^{2,1}_{\rm loc}({\mathbb T}\setminus {\sf S})$.
Assume that condition \e{eq:Ju} holds at each point 
$\eta_k\in {\sf S}^{(\pm)}$. Then 
\begin{enumerate}[\rm(i)]

\item 
The wave operators  $W_{\pm}(T(\omega), \boldsymbol\Omega; {\mathbb P})$ exist and satisfy relation \e{deltapm:eq}. These operators are isometric on the subspaces $\Ran \1_{\Delta^{(\pm)}}$ of $L^2 ({\mathbb T})$.  

\item
 Let  $\eta_k\in {\sf S}^{(\pm)}$, and let the symbols $\omega_{k}$ be defined by formulas \e{eq:G+}, \e{eq:G-}.  Then   the wave operators $W_{\pm} (T(\omega), T(\omega_{k}))$ 
exist and are isometric. 

\item
The ranges of all operators  $W_{\pm}(T(\omega), \boldsymbol\Omega; {\mathbb P})$ and $W_{\pm} (T(\omega), T(\omega_{k}))$ are orthogonal to each other.

\item 
 The asymptotic completeness holds:
\begin{equation}
\Ran W_\pm (T(\omega), \boldsymbol\Omega; {\mathbb P} \1_{\Delta^{(\pm)}}) 
\oplus  \bigoplus_{ \eta_k\in {\sf S}^{(\pm)}} \Ran W_{\pm} (T(\omega), T (\omega_{k}) ) 
= {\mathbb H}^2
\label{eq:AC++}\end{equation} 
for both  signs $``+ $'' and $``- $''. 
\end{enumerate}
     \end{theorem}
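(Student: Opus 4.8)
The plan is to dispatch the first three parts by direct citation and to concentrate the real work on the asymptotic completeness (iv), which I would obtain by localizing and then patching. For part (i), the existence of $W_\pm(T(\omega),\boldsymbol\Omega;\mathbb P)$, the relations \eqref{deltapm:eq}, and isometry on $\Ran\1_{\Delta^{(\pm)}}$ are precisely the content of Theorem~\ref{INCL2} applied with $\Delta=\mathbb T\setminus{\sf S}$. Part (ii) is Theorem~\ref{INCL3}, applied at each $\eta_k\in{\sf S}^{(\pm)}$, whose hypothesis \eqref{eq:Ju} is assumed here. Part (iii) is exactly Theorem~\ref{INCL4}. So only (iv) requires an argument.

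For (iv), write $\mathcal M_\pm$ for the left-hand side of \eqref{eq:AC++}; the inclusion $\mathcal M_\pm\subset\mathbb H^2$ is \eqref{eq:AC+}, so it suffices to prove the reverse inclusion. The plan is to show that $E_T(\Lambda)\mathbb H^2\subset\mathcal M_\pm$ for every interval $\Lambda$ satisfying \eqref{eq:TH}. The mechanism is the intertwining property \eqref{intertwine:eq}: it gives $E_T(\Lambda)W_\pm(T,\boldsymbol\Omega;\mathbb P\1_{\Delta^{(\pm)}})=W_\pm(T,\boldsymbol\Omega;\mathbb P\1_{\Delta^{(\pm)}})E_{\boldsymbol\Omega}(\Lambda)$ and $E_T(\Lambda)W_\pm(T,T_k)=W_\pm(T,T_k)E_{T_k}(\Lambda)$, so that the ranges of the localized wave operators are contained in the ranges of the global ones. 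Consequently each summand on the left of the local completeness identity \eqref{eq:ACL+} of Theorem~\ref{MainL} lies inside $\mathcal M_\pm$ (note ${\sf S}^{(\pm)}(\Lambda)\subset{\sf S}^{(\pm)}$, so every localized jump channel is a genuine channel of $\mathcal M_\pm$), and therefore $E_T(\Lambda)\mathbb H^2\subset\mathcal M_\pm$.

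To globalize, I would cover the full-measure open set $(\gamma_1,\gamma_2)\setminus\Lambda_{\rm exc}$ by a countable family of disjoint intervals $\Lambda_j$ each obeying \eqref{eq:TH}; this is possible because $\Lambda_{\rm exc}$ is closed and $\abs{\Lambda_{\rm exc}}=0$. Since the operator $T$ is absolutely continuous and $\abs{\Lambda_{\rm exc}}=0$, we have $E_T(\Lambda_{\rm exc})=0$ and hence $E_T\big((\gamma_1,\gamma_2)\setminus\Lambda_{\rm exc}\big)=E_T\big([\gamma_1,\gamma_2]\big)=I$. For any $f\in\mathbb H^2$ the truncations $E_T\big(\bigcup_{j\le n}\Lambda_j\big)f=\sum_{j\le n}E_T(\Lambda_j)f$ lie in $\mathcal M_\pm$ by the previous paragraph, and they converge strongly to $f$; as $\mathcal M_\pm$ is closed this yields $f\in\mathcal M_\pm$, i.e.\ $\mathbb H^2\subset\mathcal M_\pm$, completing \eqref{eq:AC++}.

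Steps (i)--(iii) are essentially citations and present no difficulty. For (iv) the main obstacle is in fact already resolved upstream: the delicate multiplicity identity $m=n^{(\pm)}+s^{(\pm)}$ together with Corollary~\ref{mult:cor} is what powers the local identity \eqref{eq:ACL+} in Theorem~\ref{MainL}. The only genuinely new point here is the passage from local to global, whose sole subtlety is reconciling the $\Lambda$-dependent jump index sets ${\sf S}^{(\pm)}(\Lambda)$ with the fixed global set ${\sf S}^{(\pm)}$; this is handled by the observation that $E_{T_k}(\Lambda)=0$ whenever $\Lambda\cap\Lambda_k=\varnothing$, so intervals disjoint from $\Lambda_k$ simply contribute nothing from that channel, while the inclusion ${\sf S}^{(\pm)}(\Lambda)\subset{\sf S}^{(\pm)}$ guarantees that no spurious channel appears.
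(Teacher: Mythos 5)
Your proposal is correct and follows essentially the same route as the paper: parts (i)--(iii) by citing Theorems~\ref{INCL2}, \ref{INCL3} and \ref{INCL4}, and part (iv) by combining the local completeness \eqref{eq:ACL+} of Theorem~\ref{MainL} over the constituent intervals of $(\gamma_1,\gamma_2)\setminus\Lambda_{\rm exc}$ with the intertwining property \eqref{intertwine:eq} and the fact that $E_T\big((\gamma_1,\gamma_2)\setminus\Lambda_{\rm exc}\big)=I$, since $T$ is absolutely continuous and $\abs{\Lambda_{\rm exc}}=0$. The only cosmetic difference is that you establish the reverse inclusion into the (closed) left-hand side of \eqref{eq:AC++} and quote \eqref{eq:AC+} for the forward one, whereas the paper sums the local identities \eqref{eq:ACL+} directly into \eqref{glob:eq}; both versions hinge on the same observation that $E_T(\Lambda)W_\pm(T,T_k)=W_\pm(T,T_k)E_{T_k}(\Lambda)=0$ whenever $\eta_k\in{\sf S}^{(\pm)}\setminus{\sf S}^{(\pm)}(\Lambda)$.
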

     
 \begin{proof}
 Assertions (i), (ii) and (iii) are direct consequences of Theorems~\ref{INCL2}, \ref{INCL3} and \ref{INCL4}, respectively.

Let us check (iv).
Since $\Lambda_{\rm exc}$ is closed,  
 %and $|\Lambda_{\rm exc}| = 0$, 
the set $G = (\gamma_1, \gamma_2)\setminus \Lambda_{\rm exc}$ is open. 
%has measure 
%$|\gamma_2-\gamma_1|$. 
According to Theorem~\ref{MainL}
the relation \e{eq:ACL+} is true for every constituent open 
  subinterval of $G$. For every such   subinterval $\Lambda$, the orthogonal sum over $\eta_k\in {\sf S}^{(\pm)} (\Lambda)$ 
in \eqref{eq:ACL+} coincides 
with the sum over all $\eta_k\in {\sf S}^{(\pm)}$. 
Indeed, if $\eta_k\in{\sf S}^{(\pm)}\setminus {\sf S}^{(\pm)}(\Lambda)$, then  $E_{T_k}(\Lambda) = 0$ and hence,
by the intertwining property 
\eqref{intertwine:eq}, $E_{T}(\Lambda)W_{\pm} (T, T_{k}) = W_{\pm} (T, T_{k})E_{T_k}(\Lambda) = 0$. Therefore summing relations \e{eq:ACL+} over all constituent  
  subintervals of $G$, we obtain this relation
   for the set $G$ itself:
  %Observe that the orthogonal sum over $\eta_k\in {\sf S}^{(\pm)} (\Lambda)$ in \eqref{eq:ACL+} coincides 
%with the sum over all $\eta_k\in{\sf S}^{(\pm)}$. 
%Indeed, if $\eta_k\in{\sf S}^{(\pm)}\setminus {\sf S}^{(\pm)}%(\Lambda)$, then 
%by definition, $E_{T_k}(\Lambda) = 0$.
% 
%
% It follows that
\begin{align}\label{glob:eq}
\Ran \big(E_{T}(G)W_\pm &( T, \boldsymbol\Omega; {\mathbb P} \1_{\Delta^{(\pm)}} )
\big)\notag\\
&\ \oplus \bigoplus_{ \eta_k\in {\sf S}^{(\pm)}} 
\Ran \big(E_{T}(G) W_{\pm} (T, T_k)    \big) = 
E_{T}(G){\mathbb H}^2,
\end{align}
where $T= T(\omega)$, $T_{k} = T(\omega_{k})$. 
Since the set $\Lambda_{\rm exc}$ has measure zero 
and the operator $T$ is absolutely continuous, 
we have $E_T(G) =   I$.  Thus \eqref{glob:eq} coincides with \eqref{eq:AC++}. 
\end{proof}

\subsection{Classification of the spectrum} 

Let us come back to the classification of the spectrum given by Definition~\ref{class}. 
 In this subsection we relate the subspaces $\mathcal {H}_{\rm thin}, \mathcal {H}_{\rm thick}$ 
defined in \eqref{decompose:eq} with the subspaces on the left-hand side of \eqref{eq:AC++}. 
We suppose that the conditions of Theorem~\ref{Main}  are satisfied, and define the subspaces
\begin{align} 
\begin{cases}
\mathcal H_{\rm thick}^{(\pm)} = &\ \Ran W_\pm (T , \boldsymbol\Omega; {\mathbb P}  ) ,\q T=T(\omega),
 \\[0.2cm]
\mathcal H_{\rm thin}^{(\pm)} = &\  \bigoplus_{ \eta_k\in {\sf S}^{(\pm)}} \Ran W_{\pm} (T , T _{k} ) ,\q
T_{k}=T(\omega_{k}),
\end{cases}
\label{eq:subsp}
\end{align}
of ${\mathbb H}^2$.

The  statement below (cf. Corollary~\ref{compl2}) is a direct consequence of the definition of the wave operators.

  \begin{lemma}\label{subsp}
For every $f\in \mathcal H_{\rm thick}^{(\pm)} $, asymptotic relation \e{evo:eq} holds with
  $f^{(\pm)}=  W_\pm (T , \boldsymbol\Omega; \mathbb P)^* f$. 
  For every $f\in \mathcal H_{\rm thin}^{(\pm)} $, asymptotic relation 
 \begin{align} 
  e^{-iTt}f \sim \sum_{ \eta_k\in {\sf S}^{(\pm)}} e^{-T_{k}t}f^{(\pm)}_{k},\q t\to\pm \infty,
 \label{eq:subsp1}\end{align}
     holds with
  $f_{k}^{(\pm)}=  W_\pm (T , T_{k})^* f$.
 \end{lemma}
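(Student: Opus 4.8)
The plan is to read off both asymptotic relations directly from the elementary property (f) of wave operators recorded in Section~\ref{ST:subsect}, that is, from the relation \eqref{eq:WO1x}: once a vector is presented in the form $W_\pm(B,A;J)f_0$ for a suitable $f_0$, its evolution $e^{-iBt}$ is asymptotically modelled by $Je^{-iAt}P^{\rm(ac)}_A f_0$. The only two points needing attention will be the legitimacy of choosing $f_0=W_\pm^*f$, which rests on the isometry of the relevant wave operators established in Theorem~\ref{Main}(i),(ii), and the removal of the absolutely continuous projection from the model term.

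For the thick part I would take $f\in\mathcal H_{\rm thick}^{(\pm)}=\Ran W_\pm(T,\boldsymbol\Omega;\mathbb P)$ and set $f^{(\pm)}=W_\pm(T,\boldsymbol\Omega;\mathbb P)^*f$. Since $W_\pm(T,\boldsymbol\Omega;\mathbb P)$ is isometric on $\Ran\1_{\Delta^{(\pm)}}$ by Theorem~\ref{Main}(i), the relation \eqref{eq:isom} gives $W_\pm^*W_\pm=\1_{\Delta^{(\pm)}}$, so $W_\pm W_\pm^*$ is the orthogonal projection onto the closed range, whence $f=W_\pm(T,\boldsymbol\Omega;\mathbb P)f^{(\pm)}$. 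Applying \eqref{eq:WO1x} with $B=T$, $A=\boldsymbol\Omega$, $J=\mathbb P$ then yields $\|e^{-iTt}f-\mathbb P e^{-i\boldsymbol\Omega t}P^{\rm(ac)}_{\boldsymbol\Omega}f^{(\pm)}\|\to0$ as $t\to\pm\infty$. To match \eqref{evo:eq} exactly I would note that $f^{(\pm)}=W_\pm^*f\in\Ran\1_{\Delta^{(\pm)}}$, and that $\Ran\1_{\Delta^{(\pm)}}\subset\mathcal H^{\rm(ac)}_{\boldsymbol\Omega}$ by Lemma~\ref{Omegap:lem} (equivalently \eqref{eq:ac1}), so that $P^{\rm(ac)}_{\boldsymbol\Omega}f^{(\pm)}=f^{(\pm)}$ and the projection drops out.

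For the thin part I would invoke the orthogonality of the channels from Theorem~\ref{INCL4} to split $f\in\mathcal H_{\rm thin}^{(\pm)}$ as a finite orthogonal sum $f=\sum_{\eta_k\in{\sf S}^{(\pm)}}f^{(k)}$ with $f^{(k)}\in\Ran W_\pm(T,T_k)$. Setting $f_k^{(\pm)}=W_\pm(T,T_k)^*f$, this orthogonality forces $f_k^{(\pm)}=W_\pm(T,T_k)^*f^{(k)}$, since $W_\pm(T,T_k)^*$ annihilates the complementary channels; the isometry of $W_\pm(T,T_k)$ (Theorem~\ref{Main}(ii)) then gives $f^{(k)}=W_\pm(T,T_k)f_k^{(\pm)}$. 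Applying \eqref{eq:WO1x} with $B=T$, $A=T_k$, $J=I$, and using that $T_k=T(\omega_k)$ has purely absolutely continuous spectrum so that $P^{\rm(ac)}_{T_k}=I$, I obtain $\|e^{-iTt}f^{(k)}-e^{-iT_kt}f_k^{(\pm)}\|\to0$ for each $k$. Adding these finitely many limits produces \eqref{eq:subsp1}.

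Since every ingredient—the isometry, the orthogonality of ranges, and the absolute continuity of $\boldsymbol\Omega$ on $\Ran\1_{\Delta^{(\pm)}}$ and of each $T_k$—is already available in the excerpt, I do not anticipate a genuine obstacle; the only care required is the bookkeeping that identifies $f_0=W_\pm^*f$ with the asserted $f^{(\pm)}$ and $f_k^{(\pm)}$ and justifies discarding the projections $P^{\rm(ac)}$ from the model terms.
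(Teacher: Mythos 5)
Your proposal is correct and follows essentially the same route as the paper, which simply declares the lemma ``a direct consequence of the definition of the wave operators'' (i.e.\ property (f), relation \eqref{eq:WO1x}, combined with the isometry and channel orthogonality from Theorem~\ref{Main}); you have merely made explicit the bookkeeping the authors leave implicit. In particular, your verification that $f^{(\pm)}=W_\pm^*f$ lies in $\Ran \1_{\Delta^{(\pm)}}\subset\mathcal H^{\rm(ac)}_{\boldsymbol\Omega}$ so the projection $P^{\rm(ac)}_{\boldsymbol\Omega}$ can be dropped, and that $W_\pm(T,T_k)^*$ annihilates the complementary channels, fills in precisely the details the paper's one-line proof takes for granted.
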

 
 %In terms of the subspaces \e{eq:subsp} of ${\mathbb H}^2$, we can give an alternative description of 

Using Theorem~\ref{Main} it is easy to find a relation between the thick and thin subspaces defined by \e{decompose:eq} and  the subspaces \e{eq:subsp}.
 
  \begin{lemma}\label{subsp1}
  Under the assumptions of Theorem~\ref{Main}, we have
   \begin{equation} 
  \mathcal H_{\rm thick}  \subset  \mathcal H_{\rm thick}^{(+)} \cap   \mathcal H_{\rm thick}^{(-)}
  \label{eq:subsp2}  \end{equation} 
  and
    \begin{equation} 
  \mathcal H_{\rm thin} \subset  \mathcal H_{\rm thin}^{(+)} \cap   \mathcal H_{\rm thin}^{(-)}.
  \label{eq:subsp3} \end{equation} 
   \end{lemma}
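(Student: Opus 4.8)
The plan is to prove the two inclusions \eqref{eq:subsp2} and \eqref{eq:subsp3} separately, reducing each to the local completeness relation \eqref{compl:eq} of Theorem~\ref{MainL}, read off on the constituent open subintervals $\Lambda$ of $G = (\gamma_1,\gamma_2)\setminus\Lambda_{\rm exc}$.

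First I would use that $T$ is absolutely continuous while $|\Lambda_{\rm exc}|=0$, so that $E_T$ ignores $\Lambda_{\rm exc}$ and hence $\mathcal H_{\rm thick}=\Ran E_T\big((\sigma(\boldsymbol\Omega)\setminus\Upsilon)\cap G\big)$ and $\mathcal H_{\rm thin}=\Ran E_T\big((\sigma(T)\setminus\sigma(\boldsymbol\Omega))\cap G\big)$. On each constituent interval $\Lambda$ the integers $n^{(\pm)}$ and $s^{(\pm)}$ of \eqref{eq:AU}, \eqref{eq:countZ} are constant, so $\Lambda$ lies entirely inside or entirely outside each of $\sigma(\boldsymbol\Omega)$ and $\Upsilon$; concretely $\Lambda\subset\sigma(\boldsymbol\Omega)$ iff $n^{(+)}+n^{(-)}>0$, and $\Lambda\subset\Upsilon$ iff $s^{(+)}+s^{(-)}>0$. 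Thus a constituent $\Lambda$ enters $\mathcal H_{\rm thick}$ precisely when $s^{(+)}=s^{(-)}=0$ (and $n^{(+)}+n^{(-)}>0$), and it enters $\mathcal H_{\rm thin}$ precisely when $n^{(+)}=n^{(-)}=0$ (and $s^{(+)}+s^{(-)}>0$).

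Next I would read off \eqref{compl:eq} on each such $\Lambda$. For a thick interval ($s^{(\pm)}=0$) the sum over $\eta_k$ is empty, so $\mathcal H^{(\pm)}(\Lambda)=E_T(\Lambda)\mathbb H^2$; since $\mathcal H^{(\pm)}(\Lambda)=E_T(\Lambda)\mathcal H^{(\pm)}_{\rm thick}$ by the definition \eqref{eq:ranloc}, the identity \eqref{deltapm:eq} and the intertwining property, this yields $E_T(\Lambda)\mathbb H^2\subset\mathcal H^{(\pm)}_{\rm thick}$. For a thin interval ($n^{(\pm)}=0$), Lemma~\ref{iso} gives $\mathcal H^{(\pm)}(\Lambda)=\{0\}$, so \eqref{compl:eq} reduces to $\bigoplus_k\mathcal H^{(\pm)}_k(\Lambda)=E_T(\Lambda)\mathbb H^2$; as each $\mathcal H^{(\pm)}_k(\Lambda)=E_T(\Lambda)\Ran W_\pm(T,T_k)$ is a subspace of $\mathcal H^{(\pm)}_{\rm thin}$, I obtain $E_T(\Lambda)\mathbb H^2\subset\mathcal H^{(\pm)}_{\rm thin}$. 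Taking the closed linear span of the subspaces $E_T(\Lambda)\mathbb H^2$ over the thick (resp. thin) intervals recovers $\mathcal H_{\rm thick}$ (resp. $\mathcal H_{\rm thin}$) and places it inside the closed, $T$-invariant subspace $\mathcal H^{(\pm)}_{\rm thick}$ (resp. $\mathcal H^{(\pm)}_{\rm thin}$). Since both signs are treated uniformly, intersecting over $\pm$ gives \eqref{eq:subsp2} and \eqref{eq:subsp3}.

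The main obstacle is the first, bookkeeping step: checking that the purely spectral definitions \eqref{decompose:eq} of $\mathcal H_{\rm thick}$ and $\mathcal H_{\rm thin}$ coincide, up to a null set, with the classification of the constituent intervals by the constant values of $n^{(\pm)}$ and $s^{(\pm)}$, and justifying the identifications $\mathcal H^{(\pm)}(\Lambda)=E_T(\Lambda)\mathcal H^{(\pm)}_{\rm thick}$ and $\mathcal H^{(\pm)}_k(\Lambda)\subset\mathcal H^{(\pm)}_{\rm thin}$ so that the per-interval inclusions glue into the asserted global ones. Once this is in place, everything else is a direct application of Theorem~\ref{MainL} and Lemma~\ref{iso}.
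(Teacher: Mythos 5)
Your argument is correct, but it takes a genuinely different route from the paper's. The paper proves the lemma globally, in a few lines, directly from the completeness relation \eqref{eq:AC++}: for $f\in\mathcal H_{\rm thick}$ one writes $f=E_{T}(\sigma(\boldsymbol\Omega)\setminus\Upsilon)f$, expands $f$ according to \eqref{eq:AC++}, and moves the spectral projection through the wave operators via the intertwining property \eqref{intertwine:eq}; every jump-channel term then vanishes outright because $\sigma(T_k)=\Lambda_k\subset\Upsilon$ is disjoint from $\sigma(\boldsymbol\Omega)\setminus\Upsilon$, leaving $f\in \Ran W_\pm(T,\boldsymbol\Omega;\mathbb P)=\mathcal H^{(\pm)}_{\rm thick}$, and symmetrically for $f\in\mathcal H_{\rm thin}$ the $\boldsymbol\Omega$-channel term dies because $E_{\boldsymbol\Omega}(\sigma(T)\setminus\sigma(\boldsymbol\Omega))=0$. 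In other words, the support properties of the spectral measures of the model operators do all the work, and precisely the step you flag as your main obstacle --- matching the spectral sets of \eqref{decompose:eq} with a classification of the constituent intervals of $G$ by the constants $n^{(\pm)}$, $s^{(\pm)}$ --- never arises. Your localized route (Theorem~\ref{MainL} on each constituent interval, Lemma~\ref{iso} to kill $\mathcal H^{(\pm)}(\Lambda)$ when $n^{(\pm)}=0$, then glueing by closed spans) essentially re-runs the derivation of part (iv) of Theorem~\ref{Main} from Theorem~\ref{MainL} with the classification added on top; the bookkeeping it requires is genuine but fillable: condition \eqref{eq:TH} already guarantees that each $\Lambda_k$ contains or avoids a constituent interval, so ``$\Lambda\subset\Upsilon$ iff $s^{(+)}+s^{(-)}>0$'' is built into the definitions, while ``$\Lambda\subset\sigma(\boldsymbol\Omega)$ iff $n^{(+)}+n^{(-)}>0$'' needs the small extra observation that on $G$ the essential range of $\omega$ coincides with $\omega(\mathbb T\setminus{\sf S})$ (limit values at the discontinuities lie in $\Lambda_{\rm thr}\subset\Lambda_{\rm exc}$), and the glueing over the (possibly countably many) constituent intervals is legitimate since $T$ is absolutely continuous, $|\Lambda_{\rm exc}|=0$, and the target subspaces $\mathcal H^{(\pm)}_{\rm thick}$, $\mathcal H^{(\pm)}_{\rm thin}$ are closed and $E_T$-invariant. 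Note also that by Theorem~\ref{Ros_smooth:thm} the multiplicity $m=n^{(\pm)}+s^{(\pm)}\ge 1$ on each constituent interval, which makes your trichotomy (thick, thin, mixed) exhaustive. What each approach buys: the paper's proof is shorter and needs no interval analysis, whereas yours yields sharper per-interval information (it identifies $E_T(\Lambda)\mathbb H^2$ exactly for thick and thin $\Lambda$, in the spirit of the examples in Section~5.5) at the cost of reproving intermediate steps.
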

   
    \begin{proof}
    First we check \e{eq:subsp2}.
    Let $ f\in  \mathcal H_{\rm thick}$.
    By definition \e{decompose:eq}, this means that
    $f=E_{T} (\sigma( \boldsymbol\Omega)\setminus  \Upsilon )f$. It now follows from formula \e{eq:AC++} that
 \begin{equation}
f=E_{T} (\sigma( \boldsymbol\Omega)\setminus  \Upsilon )W_\pm (T , \boldsymbol\Omega; {\mathbb P} ) f^{(\pm)}
+\sum_{ \eta_k\in {\sf S}^{(\pm)}}E_{T} (\sigma( \boldsymbol\Omega)\setminus \Upsilon )  W_{\pm} (T , T _{k})  f^{(\pm)}_{k}
\label{eq:subsp4}\end{equation} 
for some $f^{(\pm)}\in L^2 ({\mathbb T})$ and $f^{(\pm)}_{k}\in   {\mathbb H}^2$. In view of the intertwining property
\e{intertwine:eq} we can rewrite \e{eq:subsp4} as
 \begin{equation}
f= W_\pm (T , \boldsymbol\Omega; {\mathbb P} ) E_{\boldsymbol\Omega} (\sigma( \boldsymbol\Omega)\setminus  \Upsilon ) f^{(\pm)}
+\sum_{ \eta_k\in {\sf S}^{(\pm)}}   W_{\pm} (T , T _{k}) E_{T_{k}} (\sigma( \boldsymbol\Omega)\setminus  \Upsilon )  f^{(\pm)}_{k}.
\label{eq:subsp5}\end{equation} 
All terms in the sum 
 over $\eta_k$ vanish
because $\sigma (T_{k})\cap \big(\sigma( \boldsymbol\Omega)\setminus  \Upsilon \big)=\varnothing$.  Thus it follows from \e{eq:subsp5} that
\[
f= W_\pm (T , \boldsymbol\Omega; {\mathbb P} ) E_{\boldsymbol\Omega} (\sigma( \boldsymbol\Omega)\setminus  \Upsilon ) f^{(\pm)}
\in \mathcal H_{\rm thick}^{(\pm)}
\]
for both signs $``\pm"$.

  The inclusion  \e{eq:subsp3} is verified in a similar way.  Precisely, 
if $ f\in  \mathcal H_{\rm thin}$, then
$f=E_{T} (\sigma( T)\setminus \sigma( \boldsymbol\Omega))f$. 
Therefore using again  formula \e{eq:AC++} and
the intertwining property \e{intertwine:eq}, we find that 
\begin{align*}
f =  W_\pm (T , \boldsymbol\Omega; {\mathbb P} ) E_{\boldsymbol\Omega} 
(\sigma( T)&\setminus \sigma( \boldsymbol\Omega))f^{(\pm)}\notag\\
+ &\ \sum_{ \eta_k\in {\sf S}^{(\pm)}}   
W_{\pm} (T , T _{k}) E_{T_{k}} (\sigma( T)\setminus \sigma( \boldsymbol\Omega)) f^{(\pm)}_{k}.
%\label{eq:subsp6}
\end{align*} 
Since the first term on the right is zero, we see that
    \[
f=  \sum_{ \eta_k\in {\sf S}^{(\pm)}}   W_{\pm} (T , T _{k}) E_{T_{k}} (\sigma( T)\setminus \sigma( \boldsymbol\Omega)) f^{(\pm)}_{k}\in \mathcal H_{\rm thin}^{(\pm)}
\]
 for both signs $``\pm"$.
     \end{proof}
     
     Combining Lemmas~\ref{subsp} and \ref{subsp1}, we find an asymtotic behavior of $e^{-iTt} f$ for $f\in  \mathcal H_{\rm thick} $ and  $f\in  \mathcal H_{\rm thin} $.
     
      \begin{theorem}\label{Subsp}
For every $f\in \mathcal H_{\rm thick}  $, asymptotic relations \e{evo:eq}    are satisfied  for both signs $``\pm"$ with
  $f^{(\pm)}=  W_\pm (T , \boldsymbol\Omega; \mathbb P)^* f$.
  For every $f\in \mathcal H_{\rm thin} $, asymptotic relations 
 \e{eq:subsp1}  are satisfied for both signs $``\pm"$ with
  $f_{k}^{(\pm)}=  W_\pm (T , T_{k})^* f$.
 \end{theorem}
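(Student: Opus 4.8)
The plan is to derive the statement as an immediate combination of Lemmas~\ref{subsp} and \ref{subsp1}, the only point needing attention being the matching of the sign that labels the subspace with the direction of the time limit. I would start with the thick case. Let $f\in\mathcal H_{\rm thick}$. By the inclusion \eqref{eq:subsp2} of Lemma~\ref{subsp1}, this single vector lies simultaneously in $\mathcal H_{\rm thick}^{(+)}$ and in $\mathcal H_{\rm thick}^{(-)}$, so both sign-specific channel descriptions are available for it.

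Next I would invoke Lemma~\ref{subsp} separately for each sign. Applying its first part with the ``$+$'' sign to $f\in\mathcal H_{\rm thick}^{(+)}$ gives the asymptotic relation \eqref{evo:eq} as $t\to+\infty$ with $f^{(+)}=W_+(T,\boldsymbol\Omega;\mathbb P)^*f$, and applying it with the ``$-$'' sign to $f\in\mathcal H_{\rm thick}^{(-)}$ gives \eqref{evo:eq} as $t\to-\infty$ with $f^{(-)}=W_-(T,\boldsymbol\Omega;\mathbb P)^*f$. Taken together, these two one-sided statements are exactly \eqref{evo:eq} for both signs, which proves the thick part.

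The thin case follows the same scheme. For $f\in\mathcal H_{\rm thin}$, the inclusion \eqref{eq:subsp3} places $f$ in both $\mathcal H_{\rm thin}^{(+)}$ and $\mathcal H_{\rm thin}^{(-)}$, and the second part of Lemma~\ref{subsp}, applied once for each sign, yields the asymptotic relation \eqref{eq:subsp1} as $t\to\pm\infty$ with $f_k^{(\pm)}=W_\pm(T,T_k)^*f$.

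I do not expect a genuine obstacle, since both lemmas are already established; the content of the theorem is purely the observation that a sign-independent vector inherits the asymptotics of both sign-specific channels. The one thing to keep straight is that the superscript $(\pm)$ attached to the subspaces in Lemma~\ref{subsp} is tied to the direction $t\to\pm\infty$ of the evolution, so that each inclusion furnished by Lemma~\ref{subsp1} feeds precisely the corresponding one-sided limit; with this bookkeeping in place the proof is complete.
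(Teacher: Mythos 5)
Your proposal is correct and coincides with the paper's own argument: the paper proves Theorem~\ref{Subsp} precisely by combining Lemma~\ref{subsp} (the one-sided asymptotics on $\mathcal H_{\rm thick}^{(\pm)}$ and $\mathcal H_{\rm thin}^{(\pm)}$) with the inclusions \eqref{eq:subsp2} and \eqref{eq:subsp3} of Lemma~\ref{subsp1}. Your attention to matching the superscript $(\pm)$ with the direction $t\to\pm\infty$ is exactly the right bookkeeping, and nothing further is needed.
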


For  $f\in \mathcal H_{\rm mix}  $, the  asymtotics of $e^{-iTt} f$ 
as $t\to\pm\infty$ may contain both terms ${\mathbb P}e^{-i\boldsymbol\Omega t} f^{(\pm)}$ 
and $ e^{-i T_{k}t} f_{k}^{(\pm)}$. 
This is illustrated  with the explicit example considered in 
the next subsection. It exhibits all three types of spectrum.

\subsection{Example} 
Consider the symbol shown in Fig. 1.
For convenience we copy this 
figure again with more detailed labelling, see Fig. 3. 
Below we use notation  \e{eq:Jump2} and \e{eq:dpm}.

Assume that $\omega'(\z)>0$ on the arcs $(\varkappa, \eta)$ and $(\eta, \nu)$, and 
$\omega'(\z) <0$ on the arc $(\nu, \varkappa)$. 
Thus the spectrum of $T(\omega)$ is simple and it coincides with the interval 
$[0, d]$. 
Also, 
\begin{align*}
\Delta^{(+)} = (\nu, \varkappa), \; \Delta^{(-)} = (\varkappa, \eta)\cup (\eta, \nu),\ \quad 
{\sf S}^{(+)} = \{\varkappa\}, \;{\sf S}^{(-)} = \{\eta\}  
\end{align*}  
and $\Lambda_{\varkappa}= [0,b]$, $\Lambda_{\eta}= [a,c]$
so that $\Upsilon=[0,c]$.
By Definition~\ref{class}, the thin, thick and mixed spectra 
coincide with the sets $[a, b]$, $[c, d]$ and $[0, a]\cup[b, c]$, 
respectively.

The model jump symbols are 
\begin{align*}
\omega_+(\z) = b\1_{(\varkappa e^{-i\varepsilon}, \varkappa)}(\z),\quad 
\omega_-(\z) = a+(c-a)\1_{(\eta, \eta e^{i\varepsilon}) }(\z),
\end{align*}
with a fixed $\varepsilon\in (0, 2\pi)$.  
It is clear that $\Lambda_{\rm exc} = \{0, a, b, c, d\}$. Thus the set 
$(0, d)\setminus\Lambda_{\rm exc}$ is the union of four intervals,
\[
(0, d)\setminus\Lambda_{\rm exc}=(a, b)\cup (c, d)\cup(0, a)\cup (b,c), 
\]
 each of which 
satisfies \eqref{eq:TH}. Consider them one by one. 
Below we use the notation \eqref{eq:AUU}: 
$\delta^{(\pm)} = \Delta^{(\pm)}\cap \omega^{-1}(\Lambda)$.

\begin{figure}%\label{example:fig}
\resizebox{8cm}
{!}{\begin{tikzpicture}
\draw [very thick] plot[smooth, tension=.7] 
coordinates{(-4,-4) (-2.5,-3.5) (-1.5,-2)};
%{(-5,-4) (-4,-3.5) (-2.5,-3) (-1.5,-2)};
\draw [very thick] plot[smooth, tension=.7] coordinates {(-1.5,1.5) 
(-0.5,2.5) (0.5,2) (2,1.5) (3,0.5) (3.5,-0.5)};
\draw[dashed]
(-1.5,1.5) node (v2) {} -- (-1.5,-2) node (v3) {};
\draw[dashed]
(3.5,-0.5) node (v4) {} -- (3.5,-4) node (v13) {};
\draw [semithick][-latex](-5,-4) node (v1) {} -- (5,-4);
\draw [semithick][-latex](-5,-4) node (v12) {}  -- (-5,3.5);
\draw [semithick][dotted](-5,1.5) node (v8) {} -- (-1.5,1.5);
\draw [semithick][dotted](-5,-2) node (v5) {} -- (-1.5,-2);
\draw [semithick][dotted](2,1.5) node (v11) {} -- (-1.5,1.5);
\draw [semithick][dotted](3.5,-0.5) -- (-5,-0.5) node (v7){};
\draw [semithick][dotted](-0.5,2.5) node (v6) {} -- (-5,2.5) node (v9) {};
\draw [semithick][dotted] (-1.5,-2) node (v10) {} -- (3.5,-2);
\draw[dotted] (v6) -- (-0.5,-4) node (v15) {};
\draw[dashed] (v10) -- (-1.5,-4) node (v14) {};
\draw[dotted] (v11) -- (2,-4) node (v16) {};
\coordinate[label=left:$0$] (A) at (v12);
\coordinate[label=left:$a$] (A) at (v5);
\coordinate[label=left:$b$] (A) at (v7);
\coordinate[label=left:$c$] (A) at (v8);
\coordinate[label=left:$d$] (A) at (v9);
\coordinate[label=below:$\varkappa$] (A) at (-4, -4);
\coordinate[label=below:$\varkappa$] (A) at (v13);
\coordinate[label=below:$\eta$] (A) at (v14);
 \coordinate[label=below:$\nu$] (A) at (v15);
\coordinate[label=below:$\mu$] (A) at (v16); 
\end{tikzpicture}}\caption{Example}
\end{figure}

\underline{Thin spectrum.} 
Let $\Lambda = (a, b)$, 
so that 
\begin{align*}
{\delta}^{(+)}  = {\delta}^{(-)} = \varnothing,\quad n^{(+)}  = n^{(-)}  = 0, \q
s^{(+)} = s^{(-)}  = 1 . 
\end{align*}
 According to Theorem~\ref{Subsp}, for every 
$f\in E_T(\Lambda)\mathbb H^2 = \mathcal{H}_{\rm thin}$ we have 
\begin{align*}
e^{-iTt} f \sim \ e^{-i T(\omega_\pm) t} f^{(\pm)}, \q t\to\pm\infty,
\end{align*}
with $f^{(\pm)} = W_{\pm}(T, T(\omega_{\pm}))^* f$. This is consistent with \e{eq:subsp3}.

\underline{Thick spectrum. } 
Let $\Lambda = (c, d)$, so that 
\begin{align*}
{\delta}^{(+)} = (\nu, \mu), \;{\delta}^{(-)}  = (\eta, \nu),  \quad n^{(+)}  = n^{(-)}  = 1,
\quad
s^{(+)}  = s^{(-)}  = 0. 
\end{align*}

 According to Theorem~\ref{Subsp}, for every 
$f\in E_T(\Lambda)\mathbb H^2 = \mathcal{H}_{\rm thick}$ we have 
\begin{align*}
e^{-iTt} f \sim \ \mathbb P e^{-i \boldsymbol\Omega t} f^{(\pm)}, \q t\to\pm\infty,
\end{align*}
with $f^{(\pm)} = W_{\pm}(T, \boldsymbol\Omega; \mathbb P)^* f$. 
 This is consistent with \e{eq:subsp2}.

\underline{Mixed spectrum.} 
Let $\Lambda = (0, a)$, so that 
\begin{align*}
{\delta}^{(+)}  = \varnothing, \;
{\delta}^{(-)}  = (\varkappa, \eta),\quad n^{(+)}  = 0, \;
n^{(-)}  = 1\quad
s^{(+)} = 1, \; s^{(-)} = 0. 
\end{align*}
The asymptotic completeness \eqref{eq:ACL+} takes the form
\begin{align}\label{ex_mix1:eq}
\Ran W_+(T, T(\omega_+))E_{T(\omega_+)}(\Lambda) 
= 
\Ran W_-\big(T, \boldsymbol\Omega; \mathbb P\1_{(\varkappa, \eta)}\big)
= E_T(\Lambda) \mathbb H^2.
\end{align}
According to \eqref{eq:WO1x},   it follows from \eqref{ex_mix1:eq} that 
for every $f\in E_T(0, a)\mathbb H^2$ we have 
\begin{align}
e^{-iT t} f \sim e^{-i T(\omega_+)t} f^{(+)},\ t\to\infty\q \mbox{and}\q
e^{-iT t} f \sim \mathbb P e^{-i \boldsymbol\Omega t} f^{(-)},\ t\to-\infty,
\label{eq:mix}\end{align}
with  
\begin{align*}
f^{(+)}= W_+(T, &\ T(\omega_+))^* f\in E_{T(\omega_+)}(0, a) \mathbb H^2\\ &\ {\rm and}\q 
f^{(-)} = W_-\big(T, \boldsymbol\Omega; \mathbb P\big)^*f
\in E_{\boldsymbol\Omega}(0, a)L^2(\mathbb T). 
\end{align*}
%\underline{Mixed spectrum.} 

Let $\Lambda = (b, c)$, so that 
\begin{align*}
{\delta}^{(+)} = (\mu, \varkappa),\; 
{\delta}^{(-)} = \varnothing, \quad n^{(+)}  = 1, \; n^{(-)}  = 0, 
\quad
s^{(+)} = 0, \; s^{(-)}  = 1 . 
\end{align*}
The asymptotic completeness \eqref{eq:ACL+} takes the form
\begin{align}\label{ex_mix2:eq}
\Ran W_+\big(T, \boldsymbol\Omega; \mathbb P\1_{(\mu, \varkappa)}\big)
=  
\Ran W_-(T, T(\omega_-))E_{T(\omega_-)}(\Lambda) = E_T(\Lambda)\mathbb H^2.
\end{align}
 It follows from \eqref{ex_mix2:eq} that 
for  $f\in E_T(b,c)\mathbb H^2$ the asymptotics of $e^{-iT t} f $ is given by relations similar to \e{eq:mix}.

Thus,  on the mixed spectrum, 
the operator $T$ has different evolution properties as $t\to\infty$ and $t\to-\infty$.


\begin{thebibliography}{00}
%%%%%%%%%%%%%%%%%%%%%%%%%%%%%%%%%%%%%%%%
%%%%%%%%%%%%%%%%%%%%%%%%%%%%%%%%%%%%%%%%
  
 \bibitem {BS}
M. Sh. Birman, M. Z. Solomyak, \emph{
Spectral theory of selfadjoint operators in Hilbert space}, 
Translated from the 1980 Russian original by S. Khrushch\"ev and V. Peller. 
Mathematics and its Applications (Soviet Series). 
D. Reidel Publishing Co., Dordrecht, 1987.
 
%\bibitem{BSK2006}
%A. B\"ottcher, B. Silbermann,
%\emph{Analysis of Toeplitz operators}, 
%Second edition. Prepared jointly with Alexei Karlovich. 
%Springer Monographs in Mathematics. Springer-Verlag, Berlin, 2006. xiv+665 pp. 

\bibitem{DIK2011}
P. Deift, A. Its, I. Krasovsky, \emph{
Asymptotics of Toeplitz, Hankel, and Toeplitz+Hankel determinants with Fisher-Hartwig singularities}, 
Ann. of Math. (2) \textbf{174} (2011), no. 2, 1243--1299. 

%\bibitem {Duren} P.~L.~Duren, {\it Theory of $H^p$ spaces}, Academic Press, New York and London, 1970.

\bibitem{Fisher1968}
M.~E. Fisher and R.~E. Hartwig, \emph{Toeplitz determinants: Some applications,
  theorems, and conjectures}, Advan. Chem. Phys. \textbf{15} (1968), 333--353.

\bibitem{Gren_Szego_1958}
U.~Grenander and G.~Szeg\H o, \emph{Toeplitz forms and their applications},
  California Monographs in Mathematical Sciences, University of California
  Press, Berkeley-Los Angeles, 1958.

\bibitem {Hof} K.~Hoffman, {\it Banach spaces of analytic functions}, Prentice-Hall, Inc., Englewood Cliffs, New York, 1962.

 

 \bibitem{Ismag} 
R.~S.~Ismagilov,  \emph{
On the spectrum of Toeplitz matrices}(Russian), 
Dokl. Akad. Nauk SSSR \textbf{149}(1963), 769--772; 
translated in Soviet Mathematics \textbf{4} (1963), 462--465.
  
\bibitem{Kras}
I.~Krasovsky, \emph{Aspects of {T}oeplitz Determinants}. \emph{Random Walks,
  Boundaries and Spectra}, \emph{Progr. Probab.}, vol.~64, 305--324,
  Birkh\"auser/Springer Basel AG, Basel, 2011.


 
%\bibitem{NK} N.~K.~Nikolski, {\em Operators, functions, and systems: an easy reading}, vol. I: Hardy, Hankel, and %Toeplitz, Math. Surveys and Monographs vol.~92,  Amer. Math. Soc.,   Providence,
%  Rhode Island, 2002.
  
  
  
\bibitem{Pe}
V.~V.~Peller,
\emph{Hankel operators and their applications}, 
Springer Verlag,  2002. 

\bibitem{PuYaf} A.~Pushnitski, D.~Yafaev, 
\emph{Spectral and scattering theory of self-adjoint Hankel operators with piecewise continuous symbols}, 
J. Oper. Theory {\bf 74} (2015), 101--139.

\bibitem {RS4} M. Reed,  B. Simon, {\em Methods of modern mathematical physics}, vol.   3, 4,
Academic Press, San Diego, CA,  1979, 1978.


\bibitem{Ros1}
M.~Rosenblum, 
\emph{The absolute continuity of Toeplitz's matrices}, 
Pacific J. Math. \textbf{10} (1960), 987--996. 

\bibitem{Ros2}
M.~Rosenblum, 
\emph{
Self-adjoint Toeplitz operators and associated orthonormal functions},
Proc. Amer. Math. Soc. \textbf{13} (1962), 590--595. 


\bibitem{Ros3}
M.~Rosenblum, 
\emph{
A concrete spectral theory for self-adjoint Toeplitz operators}, 
Amer. J. Math. \textbf{87} (1965), 709--718. 

\bibitem{SobYaf} A.V. Sobolev, D. R. Yafaev,
\emph{On spectral analysis of self-adjoint Toeplitz operators}, arXiv 1906.07075.

\bibitem{Szego1915}
G.~Szeg{\H o}, \emph{Ein Grenzwertsatz {\"u}ber die Toeplitzschen Determinanten
  einer reellen positiven Funktion}, Math. Ann \textbf{76} (1915), 409--503.

\bibitem{Yafaev}
D.~R.~Yafaev,
\emph{Mathematical scattering theory. General theory},
Amer. Math. Soc., Providence, RI, 1992. 

\bibitem{Yafaev1}
D.~R.~Yafaev,
\emph{Mathematical scattering theory. Analytic theory},
Amer. Math. Soc., Providence, RI, 2010. 

%\bibitem{Yaf}
%D. R. Yafaev, \emph{Spectral and scattering theory for perturbations of the Carleman operator}, 
%Algebra i Analiz \textbf{25} (2013), no. 2, 251--278; translation in
%St. Petersburg Math. J. \textbf{25} (2014), no. 2, 339--359.   
  
  
%\bibitem{Y2}
%D.~R.~Yafaev, \emph{ Spectral and scattering theory for perturbations of the Carleman operator}, St. Peterburg Math. J. {\bf 25} (2014),  339--359.


\end{thebibliography}
\end{document}